\documentclass[12pt]{amsart}

\usepackage[utf8]{inputenc}
\usepackage[english]{babel}
\usepackage{amsmath,amsfonts,amssymb,amsthm,amscd}
\usepackage[a4paper,nomarginpar]{geometry}
\usepackage{epsfig,graphicx,color}
\usepackage[all]{xy}

\geometry{lmargin=24mm, rmargin=24mm, tmargin=25mm, bmargin=25mm}

\allowdisplaybreaks

\newtheorem{theorem}{Theorem}
\newtheorem{lemma}[theorem]{Lemma}
\newtheorem{proposition}[theorem]{Proposition}
\newtheorem{corollary}[theorem]{Corollary}

\theoremstyle{definition}

\newtheorem{examples}[theorem]{Examples}

\newtheorem{remark}[theorem]{Remark}

\newcommand{\N}{\mathbb{N}}  
\newcommand{\Z}{\mathbb{Z}}  
\newcommand{\R}{\mathbb{R}}  
\newcommand{\C}{\mathbb{C}}  
\newcommand{\D}{\mathbb{D}}  
\newcommand{\K}{\mathbb{K}}  

\newcommand{\eps}{\varepsilon} 
\newcommand{\ov} {\overline}   

\newcommand{\cR}{\mathcal{R}}

\newcommand{\cV}{\mathcal{V}}

\newcommand{\spa}{\operatorname{span}}

\newcommand{\co}{\operatorname{co}}
\newcommand{\Orb}{\operatorname{Orb}}
\newcommand{\card}{\operatorname{card}}

\newcommand{\udens}{\operatorname{\overline{dens}}}
\newcommand{\rec}{\operatorname{Rec}}
\newcommand{\norm}[1]{{\left\|#1\right\|}}	


\begin{document}

\title[On shadowing and chain recurrence in linear dynamics]{On shadowing and chain recurrence\\ in linear dynamics}

\subjclass[2020]{Primary 37B65, 37B20, 47A16; Secondary 47B37 46A45.}
\keywords{Linear dynamics, shadowing property, chain recurrence, chaos, weighted shifts.}
\date{} 
\dedicatory{}
\maketitle

\begin{center}
{\sc Nilson C. Bernardes Jr.}

\medskip
Institut Universitari de Matem\`atica Pura i Aplicada\\
Universitat Polit\`ecnica de Val\`encia\\
Edifici 8E, Acces F, 4a Planta, 46022 Val\`encia, Spain\\
and\\
Departamento de Matem\'atica Aplicada\\
Universidade Federal do Rio de Janeiro\\
Caixa Postal 68530, 21945-970 Rio de Janeiro, Brazil

\smallskip
{\it e-mail}: ncbernardesjr@gmail.com

\bigskip
{\sc Alfredo Peris}

\medskip
Institut Universitari de Matem\`atica Pura i Aplicada\\
Universitat Polit\`ecnica de Val\`encia\\
Edifici 8E, Acces F, 4a Planta, 46022 Val\`encia, Spain

\smallskip
{\it e-mail}: aperis@mat.upv.es
\end{center}

\medskip
\begin{abstract}
In the present work we study the concepts of shadowing and chain recurrence in the setting of linear dynamics.
We prove that shadowing and finite shadowing always coincide for operators on Banach spaces,
but we exhibit operators on the Fr\'echet space $H(\C)$ of entire functions that have the finite shadowing property
but do not have the shadowing property.
We establish a characterization of mixing for continuous maps with the finite shadowing property in the setting of uniform spaces,
which implies that chain recurrence and mixing coincide for operators with the finite shadowing property on any topological vector space.
We establish a characterization of dense distributional chaos for operators with the finite shadowing property on Fr\'echet spaces.
As a consequence, we prove that if a Devaney chaotic (resp.\ a chain recurrent) operator on a Fr\'echet space
(resp.\ on a Banach space) has the finite shadowing property, then it is densely distributionally chaotic.
We obtain complete characterizations of chain recurrence for weighted shifts on Fr\'echet sequence spaces.
We prove that generalized hyperbolicity implies periodic shadowing for operators on Banach spaces.
Moreover, the concepts of shadowing and periodic shadowing coincide for unilateral weighted backward shifts,
but these notions do not coincide in general, even for bilateral weighted shifts.
\end{abstract}


\section{Introduction}

Consider a metric space $X$ with metric $d$ and a map $f : X \to X$.
Given $\delta > 0$, recall that a {\em $\delta$-pseudotrajectory} of $f$ is a finite or infinite sequence $(x_j)_{i < j < k}$ in $X$,
where $-\infty \leq i < k \leq \infty$ and $k - i \geq 3$, such that
\[
d(f(x_j),x_{j+1}) \leq \delta \ \ \text{ for all } i < j < k-1.
\]
A finite $\delta$-pseudotrajectory of the form $(x_j)_{j=0}^k$ is also called a {\em $\delta$-chain for $f$ from $x_0$ to $x_k$}
and the number $k$ is its {\em length}.
Recall that $f$ has the {\em positive shadowing property} if for every $\eps > 0$, there exists $\delta > 0$ such that every
$\delta$-pseudotrajectory $(x_j)_{j \in \N_0}$ of $f$ is {\em $\eps$-shadowed} by a real trajectory of $f$, that is,
there exists $x \in X$ such that
\[
d(x_j,f^j(x)) < \eps \ \ \text{ for all } j \in \N_0.
\]
If $f$ is bijective, then the {\em shadowing property} is defined by replacing the set $\N_0$ by the set $\Z$ in the above definition.
Recall also that $f$ is {\em chain recurrent} (resp.\ {\em chain transitive}) if for every $x \in X$ (resp.\ $x,y \in X$) and
every $\delta > 0$, there is a $\delta$-chain for $f$ from $x$ to itself (resp.\ from $x$ to $y$).
Moreover, $f$ is {\em chain mixing} if for every $x,y \in X$ and every $\delta > 0$, there exists $k_0 \in \N$ such that
for every $k \geq k_0$, there is a $\delta$-chain for $f$ from $x$ to $y$ with length $k$.

The notions of pseudotrajectory, shadowing and chain recurrence originated in the semi\-nal works of Conley \cite{CCon72},
Sina$\breve{\text{\i}}$ \cite{JSin72} and Bowen \cite{RBow75} in the early 1970's.
These concepts play a fundamental role in the qualitative theory of dynamical systems and differential equations.
We refer the reader to the books \cite{NAokKHir94,RDev89,AKatBHas95,KPal00,SPil99,MShu87} for nice expositions on these
important concepts and their applications.

In the last few years some interesting results on shadowing and chain recurrence were obtained in the setting of linear dynamics
\cite{AlvBerMes21,AntManVar22,BerCirDarMesPuj18,NBerAMes21,CirGolPuj21}.
For instance, it has long been known that every invertible hyperbolic operator on a Banach space
has the shadowing property~\cite{AMor81,JOmb94}
(an operator on a Banach space is said to be {\em hyperbolic} if its spectrum does not intersect the unit disc)
and that the converse holds in the finite-dimensional setting~\cite{AMor81,JOmb94}
and for invertible normal operators on Hilbert spaces~\cite{MMaz00}.
However, it remained open for a while whether this converse is always true or not.
This problem was finally settled in~\cite{BerCirDarMesPuj18}, where the first examples of non-hyperbolic operators
with the shadowing property were exhibited.
In the present work we will continue this line of investigation by analyzing some problems on shadowing and chain recurrence
for operators.
Although our main goal is to investigate the dynamics of linear operators on Fr\'echet spaces and, in particular, on Banach spaces,
some of our results will be established in much greater generality.
Below we present the topics covered in the paper and its organization.

In Section~\ref{SVFS} we will consider the finite shadowing property. This variation of the notion of shadowing is defined
as the positive shadowing property but considering only finite pseudotrajectories (of arbitrary length) instead of infinite ones.
From the computational point of view, it seems to be even more relevant than shadowing, since computer-generated trajectories
are actually {\em finite} pseudotrajectories.
It is well known that shadowing and finite shadowing coincide in the setting of compact metric spaces \cite[Lemma~1.1.1]{SPil99},
but this equivalence already fails on a certain locally compact subspace of $\R$ \cite[Example~2.3.4]{DarGonSob21}.
We will investigate the validity of the equivalence between shadowing and finite shadowing in the setting of linear dynamics.
Our main result asserts that these concepts always coincide for operators on Banach spaces (Theorem~\ref{EquivSh}).
Nevertheless, we will exhibit operators on the Fr\'echet space $H(\C)$ of entire functions that have the finite shadowing property
but do not have the shadowing property (Theorem~\ref{Counterexample}).

In Section~\ref{Chaos} we will investigate some chaotic behaviors of operators with the finite sha\-dowing property.
We will establish a characterization of mixing for continuous maps with the finite shadowing property in the setting of uniform spaces
(Theorem~\ref{Equi1}), which will imply a very general theorem in linear dynamics (Theorem~\ref{Equi2}).
We will also establish a characterization of dense distributional chaos for operators with the finite shadowing property
on Fr\'echet spaces (Theorem~\ref{DistChaos}).
As applications, we will show that if a Devaney chaotic (resp.\ a chain recurrent) continuous linear operator on a Fr\'echet space
(resp.\ on a Banach space) has the finite shadowing property, then it is densely distributionally chaotic
(Theorems~\ref{DCDDC} and \ref{CRDDC}).
In particular, the Devaney chaotic operators constructed by Menet~\cite{QMen17} do not have the finite shadowing property,
since they are not distributionally chaotic.

In Section~\ref{WeightedShifts} we will consider weighted shifts on Fr\'echet sequence spaces.
Due to the importance of weighted shifts in the area of operator theory and its applications, the dynamics of these operators
has been extensively investigated by many researchers (see the books \cite{FBayEMat09,KGroAPer11} and the papers
\cite{FBayIRuz15,BerBonMulPer13,BerBonPer20,BerCirDarMesPuj18,NBerAMes21,GCosMSam04,KGro00,HSal95}, for instance).
Our goal in this section is to establish complete characterizations of chain recurrence for weighted shifts
on Fr\'echet sequence spaces (Theorems~\ref{BBSCRThm}, \ref{BWBSCRThm}, \ref{UBSCRThm} and \ref{UWBSCRThm}),
which extend previous results from \cite{AlvBerMes21} in the case of $c_0$ and $\ell^p$ spaces.
Moreover, we will illustrate these characterizations by presenting concrete examples on some classical sequence spaces.

In Section~\ref{PeriodicShadowing} we will investigate the so-called periodic shadowing property \cite{PKos05,OsiPilTik10}
for continuous linear operators on Banach spaces.
Our main result asserts that generalized hyperbolicity implies periodic shadowing (Theorem~\ref{PS} and Corollary~\ref{PSCor}).
Next we will prove that positive shadowing and positive periodic shadowing coincide for unilateral weighted backward shifts
on the classical Banach sequence spaces $\ell_p(\N)$ ($1 \leq p < \infty$) and $c_0(\N)$ (Theorem~\ref{Characterization1}).
However, the notions of shadowing and periodic shadowing do not coincide in general, even for bilateral weighted shifts.
In fact, we will obtain a class of operators with the periodic shadowing property (Theorem~\ref{PeriodicThm})
that includes bilateral weighted shifts without the shadowing property (Corollary~\ref{BWBSNot}).

In the Appendix at the end of the paper we will establish several basic facts related to the notion of chain recurrence and
the shadowing property for continuous linear operators on topological vector spaces. Our goal is to lay the foundations in
great generality, complementing and extending previous basic results on this subject
(see \cite{AlvBerMes21,AntManVar22,BerCirDarMesPuj18,NBerAMes21,SPil99}, for instance).
Since these results are of a more elementary character, we decided to postpone them to the Appendix.
However, some of these results will be used in previous sections, but properly referenced.

We will close the paper by proposing some open problems.

Throughout $\K$ denotes either the field $\R$ of real numbers or the field $\C$ of complex numbers.
Moreover, $\N$ denotes the set of all positive integers and $\N_0\!:= \N \cup \{0\}$.
Whenever we consider a Fr\'echet space $X$, we will tacitly assume that we have already chosen an increasing sequence
$(\|\cdot\|_k)_{k \in \N}$ of seminorms that induces its topology and that it is endowed with the compatible complete
invariant metric given by
\begin{equation}\label{Metric}
d(x,y)\!:= \sum_{k=1}^\infty \frac{1}{2^k} \min\{1,\|x - y\|_k\} \ \ \ \ (x,y \in X).
\end{equation}
We observe that the notions of shadowing and chain recurrence depend only on the underlying uniform structure of the space,
and so they do not depend on the specific compatible invariant metric we choose.


\section{Shadowing versus finite shadowing for operators}\label{SVFS}

In this section we will investigate whether or not shadowing and finite shadowing coincide for operators on Fr\'echet spaces.

Given a metric space $X$, recall that a map $f : X \to X$ has the {\em finite shadowing property}
if for every $\eps > 0$, there exists $\delta > 0$ such that for each $\delta$-chain $(x_j)_{j=0}^k$ of $f$, there exists $x \in X$ with
\[
d(x_j,f^j(x)) < \eps \ \ \text{ for all } j \in \{0,\ldots,k\}.
\]
In this case, if $f$ is bijective, $(x_j)_{j=-i}^k$ is a finite $\delta$-pseudotrajectory of $f$ ($i \geq 0$, $k \geq 1$) and we define
$y_t\!:= x_{t-i}$ for $t \in \{0,\ldots,i+k\}$, then $(y_t)_{t=0}^{i+k}$ is a $\delta$-chain for $f$, and so there exists
$y \in X$ with $d(y_t,f^t(y)) < \eps$ for all $t \in \{0,\ldots,i+k\}$. Hence, $x\!:= f^i(y) \in X$ satisfies
\[
d(x_j,f^j(x)) < \eps \ \ \text{ for all } j \in \{-i,\ldots,k\}.
\]
This explains why we use the terminology ``finite shadowing'' instead of ``positive finite shadowing'' for the above notion.

It turns out that shadowing and finite shadowing always coincide for operators on Banach spaces.

\begin{theorem}\label{EquivSh}
For any invertible continuous linear operator $T$ on any Banach space $X$, the following assertions are equivalent:
\begin{itemize}
\item [(i)] $T$ has the shadowing property;
\item [(ii)] $T$ has the positive shadowing property;
\item [(iii)] $T$ has the finite shadowing property.
\end{itemize}
In the non-invertible case, (ii) and (iii) are equivalent.
\end{theorem}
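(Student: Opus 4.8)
The plan is to dispose of the two routine implications and then concentrate on the two substantial ones, namely (iii)$\Rightarrow$(ii) and, in the invertible case, (ii)$\Rightarrow$(i). Both (i)$\Rightarrow$(ii) (when $T$ is invertible) and (ii)$\Rightarrow$(iii) are obtained by padding a pseudotrajectory with genuine pieces of orbit: given a $\delta$-chain $(x_j)_{j=0}^k$, appending $T^{j-k}(x_k)$ for $j>k$ produces an infinite one-sided $\delta$-pseudotrajectory, so any point that $\eps$-shadows the latter $\eps$-shadows the chain, which gives (ii)$\Rightarrow$(iii) for an arbitrary continuous selfmap of a metric space; dually, prepending the genuine orbit $(T^jx_0)_{j<0}$ to a one-sided $\delta$-pseudotrajectory makes it two-sided and gives (i)$\Rightarrow$(ii) when $T$ is invertible. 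Everything thus reduces to showing that finite shadowing implies positive shadowing for operators on Banach spaces and that positive shadowing implies shadowing in the invertible case; since the first of these uses no invertibility, it will also cover the non-invertible assertion.

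Next I would pass to a Lipschitz form of the hypotheses, exploiting that the metric on a Banach space is homogeneous: if $(x_j)_{j=0}^k$ is a $\delta$-chain then $(\lambda x_j)_{j=0}^k$ is a $(\lambda\delta)$-chain, and $x$ $\eps$-shadows $(x_j)$ precisely when $\lambda x$ $(\lambda\eps)$-shadows $(\lambda x_j)$. Applying finite shadowing once with $\eps=1$ and then rescaling yields a single constant $L>0$ such that \emph{every} $\delta$-chain, of any length and for any $\delta>0$, is $L\delta$-shadowed, and likewise the positive and two-sided shadowing properties upgrade to their Lipschitz versions with a universal constant. This scaling step is exactly what is unavailable on $H(\C)$, whose canonical metric \eqref{Metric} is not homogeneous, and it is what underlies the contrast with Theorem~\ref{Counterexample}. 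Writing a $\delta$-pseudotrajectory as $x_{j+1}=Tx_j+e_j$ with $\|e_j\|\le\delta$ and a candidate shadowing orbit through the deviations $z_j:=x_j-T^jx$, the shadowing problems become purely linear: from a bounded (finite, one-sided, or two-sided) sequence $(e_j)$ one must produce a sequence $(z_j)$ with $z_{j+1}=Tz_j+e_j$ and $\sup_j\|z_j\|\le L\sup_j\|e_j\|$.

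In this language, finite shadowing says precisely that the finite truncations of the coboundary-type operator $\Phi\colon(z_j)\mapsto(z_{j+1}-Tz_j)$ are surjective with a right inverse of norm $\le L$, uniformly in the length, while positive (resp.\ two-sided) shadowing says that $\Phi$ itself is surjective on $\ell^\infty(\N_0,X)$ (resp.\ on $\ell^\infty(\Z,X)$), equivalently — by the open mapping theorem — surjective with a bounded right inverse. So the heart of the proof is to upgrade ``uniformly bounded solvability on all finite segments'' to ``bounded solvability on the whole half-line'', and this is where completeness of $X$ must be used in an essential way. I expect this to be the only genuinely hard point: in contrast with the compact case (\cite[Lemma~1.1.1]{SPil99}), one cannot simply extract a convergent subsequence of the finite shadowing orbits, since bounded subsets of an infinite-dimensional Banach space need not be relatively compact. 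The natural approach is a telescoping construction — solve on the initial segments $\{0,\dots,N\}$ for $N\to\infty$ and, at each stage, correct the current partial orbit by a homogeneous solution $j\mapsto T^jh$, choosing these corrections (using the slack in the finite solutions, the universal constant $L$, and the basic structural facts about shadowing collected in the Appendix) so that the accumulated correction is norm-summable and the completed orbit converges coordinatewise to a bounded solution. Keeping the corrections summable — that is, controlling the growth of $\|T^jh\|$ along the successive blocks for the relevant $h$ — is precisely where the argument has to do real work.

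Finally, for (ii)$\Rightarrow$(i) in the invertible case I would run the same scheme on $\ell^\infty(\Z,X)$: given a two-sided $\delta$-pseudotrajectory, positive shadowing applied to each shifted suffix $(x_{j-n})_{j\ge0}$ produces orbits that $\eps$-shadow it on $\{-n,-n+1,\dots\}$, and a limiting argument of the same kind (now needed to pass $n\to\infty$, with the invertibility of $T$ entering exactly because two-sided shadowing involves negative iterates) assembles from these a single orbit $\eps$-shadowing it on all of $\Z$.
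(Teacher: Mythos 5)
Your reductions are sound, and your two structural observations --- that homogeneity of the norm upgrades finite shadowing to a Lipschitz version with a universal constant $L$ (the precise feature that fails for the metric of $H(\C)$ and underlies Theorem~\ref{Counterexample}), and that the problem is equivalent to uniformly bounded solvability of $z_{j+1}=Tz_j+e_j$ --- are both correct and are exactly the points the paper emphasizes. But the proof has a genuine gap at the place you yourself flag: the passage from uniformly bounded solvability on all finite segments to bounded solvability on the half-line (and then on $\Z$) is only described as a plan, and the mechanism you propose does not obviously close. If $z^{(N)}$ and $z^{(N+1)}$ are the finite solutions on $\{0,\dots,N\}$ and $\{0,\dots,N+1\}$, their difference on the common block is a homogeneous solution $j\mapsto T^jh$ with $\|h\|\leq 2L\delta$ but with $\|T^jh\|$ potentially growing without bound along the block; ``correcting by a homogeneous solution'' cannot repair this, since adding $T^jh'$ perturbs every coordinate by an amount you again cannot control. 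Saying that keeping the corrections summable ``is precisely where the argument has to do real work'' identifies the obstruction without overcoming it, and this is the entire content of the theorem.

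The paper's proof resolves this by modifying the \emph{pseudotrajectory} rather than the candidate orbits. First, by applying finite shadowing on longer and longer central blocks and linearly interpolating between the resulting orbits $T^ju_k$ and $T^ju_{k+1}$ over windows of length $km$, it replaces $(x_j)$ by a nearby $\delta$-pseudotrajectory whose errors $\|Ty_j-y_{j+1}\|$ tend to $0$ at $\pm\infty$. Then, inductively, it splices the exact orbit of the current shadowing point $v_t$ into the pseudotrajectory on the central block $|j|\leq m_t$, interpolating back to the old pseudotrajectory over a window of fixed length $p>\eps/\delta$; the interpolation errors are of order $\eps/p<\delta$ and the old errors are already small for $|j|\geq m_t$, so the new pseudotrajectory is a $\delta/2^{t}$-pseudotrajectory. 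The linear $\eps$--$\delta$ relation then gives a shadowing point $v_{t+1}$ within $\eps/2^{t+2}$ of $y^{(t+1)}_0=v_t$, so $(v_k)$ is Cauchy and completeness finishes the argument. Note also that the paper runs this once, directly for (iii)$\Rightarrow$(i), whereas your route (iii)$\Rightarrow$(ii)$\Rightarrow$(i) would require carrying out the hard limiting construction twice. Until you supply a device of this kind --- or some substitute for the missing compactness --- the proposal is an accurate road map rather than a proof.
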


\begin{proof}
(i) $\Rightarrow$ (ii) $\Rightarrow$ (iii): Obvious.

\noindent
(iii) $\Rightarrow$ (i) :
Given $\eps > 0$, let $\delta > 0$ be associated to $\eps/4$ according to the fact that $T$ has the finite shadowing property.
Let $(x_j)_{j \in \Z}$ be a $\delta$-pseudotrajectory of $T$. The first step consists in replacing the $\delta$-pseudotrajectory
$(x_j)_{j \in \Z}$ by a $\delta$-pseudotrajectory $(y_j)_{j \in \Z}$ which is close to $(x_j)_{j \in \Z}$, in the sense that
\begin{equation}\label{FS1}
\|y_j - x_j\| < \frac{\eps}{4} \ \ \text{ for all } j \in \Z,
\end{equation}
and has the following additional property:
\begin{equation}\label{FS2}
\lim_{j \to \pm\infty} \|Ty_j - y_{j+1}\| = 0.
\end{equation}
For this purpose, choose $m \in \N$ with $\eps/m < \delta$ and define
\[
n_k\!:= \frac{k(k-1)m}{2} \ \ \text{ for all } k \in \N.
\]
For each $k \in \N$, there exists $u_k \in X$ such that
\begin{equation}\label{FS3}
\|x_j - T^ju_k\| < \frac{\eps}{4} \ \ \text{ for all } j \in \{-n_{k+1},\ldots,n_{k+1}\}.
\end{equation}
For each $k \in \N$ and each $j \in \{0,\ldots,km-1\}$, define
\[
y_{n_k + j}\!:= \frac{km-j}{km}\, T^{n_k + j}u_k + \frac{j}{km}\, T^{n_k + j}u_{k+1}
\]
and
\[
y_{-n_k - j}\!:= \frac{km-j}{km}\, T^{-n_k - j}u_k + \frac{j}{km}\, T^{-n_k - j}u_{k+1}.
\]
It follows from (\ref{FS3}) that (\ref{FS1}) holds. Moreover, for each $k \in \N$ and each $j \in \{0,\ldots,km-1\}$,
\begin{align*}
\|Ty_{n_k+j} - y_{n_k+j+1}\| &= \Big\|\frac{1}{km}\, T^{n_k+j+1}u_k - \frac{1}{km}\, T^{n_k+j+1}u_{k+1}\Big\|\\
  &\leq \frac{1}{km}\, \Big\|T^{n_k+j+1}u_k - x_{n_k+j+1}\Big\| + \frac{1}{km}\, \Big\|x_{n_k+j+1} - T^{n_k+j+1}u_{k+1}\Big\|\\
  &< \frac{\eps}{2km} < \delta
\end{align*}
and, similarly,
\[
\|Ty_{-n_k-j} - y_{-n_k-j+1}\| < \frac{\eps}{2km} < \delta.
\]
This shows that $(y_j)_{j \in \Z}$ is also a $\delta$-pseudotrajectory of $T$ and that (\ref{FS2}) holds.

The second step consists in constructing inductively an increasing sequence $(m_k)_{k \in \N}$ of positive integers,
a sequence $(v_k)_{k \in \N}$ of vectors in $X$ and a sequence $((y^{(k)}_j)_{j \in \Z})_{k \in \N}$ of pseudotrajectories of $T$
satisfying the following conditions for each $k \in \N$:
\begin{itemize}
\item [(a)] $(y^{(k)}_j)_{j \in \Z}$ is a $\displaystyle\frac{\delta}{2^{k-1}}$-pseudotrajectory of $T$;

\smallskip
\item [(b)] $\displaystyle \lim_{j \to \pm\infty} \|Ty^{(k)}_j - y^{(k)}_{j+1}\| = 0$;

\smallskip
\item [(c)] $\displaystyle\|T y^{(k)}_j - y^{(k)}_{j+1}\| < \frac{\delta}{2^{k+1}}$ whenever $|j| \geq m_k$;

\smallskip
\item [(d)] $\displaystyle\|y^{(k)}_j - T^jv_k\| < \frac{\eps}{2^{k+1}}$ whenever $|j| \leq m_k + p$;

\smallskip
\item [(e)] $y^{(k)}_0 = v_{k-1}$ and $\displaystyle\|y^{(k)}_j - y^{(k-1)}_j\| < \frac{\eps}{2^k}$ for all $j \in \Z$ (provided $k \geq 2$).
\end{itemize}
The number $p$ is a fixed positive integer greater than $\eps/\delta$. We begin by defining
\[
y^{(1)}_j\!:= y_j \ \ \text{ for all } j \in \Z.
\]
By (\ref{FS2}), we can choose an $m_1 \in \N$ such that
\[
\|Ty^{(1)}_j - y^{(1)}_{j+1}\| < \frac{\delta}{2^2} \ \ \text{ whenever } |j| \geq m_1.
\]
By finite shadowing, there exists $v_1 \in X$ such that
\[
\|y^{(1)}_j - T^jv_1\| < \frac{\eps}{2^2} \ \ \text{ whenever } |j| \leq m_1 + p.
\]
Hence, (a), (b), (c) and (d) hold with $k\!:= 1$. Suppose that $m_k$, $v_k$ and $(y^{(k)}_j)_{j \in \Z}$ have already been chosen
for $k \in \{1,\ldots,t\}$ so that all the desired properties hold. Define
\[
y^{(t+1)}_j\!:=
\left\{\begin{array}{cll}
T^jv_t & \text{if} & |j| \leq m_t\\
\frac{m_t + p - |j|}{p}\, T^j v_t + \frac{|j| - m_t}{p} \, y^{(t)}_j & \text{if} & m_t < |j| < m_t + p\\
y^{(t)}_j & \text{if} & |j| \geq m_t + p
\end{array}\right. \!\!.
\]
Some elementary computations show that (a) and (e) hold with $k\!:= t+1$. Since
\[
\lim_{j \to \pm\infty} \|Ty^{(t+1)}_j - y^{(t+1)}_{j+1}\| = \lim_{j \to \pm\infty} \|Ty^{(t)}_j - y^{(t)}_{j+1}\| = 0,
\]
we have that (b) holds with $k\!:= t+1$. Moreover, we can choose an $m_{t+1} > m_t$ such that (c) hold with $k\!:= t+1$.
Finally, by finite shadowing, there exists a vector $v_{t+1} \in X$ such that (d) holds with $k\!:= t+1$.
This completes our induction process.

Let us now complete the proof. Since
\[
\|v_k - v_{k+1}\| = \|y^{(k+1)}_0 - T^0v_{k+1}\| < \frac{\eps}{2^{k+2}} \ \ \text{ for all } k \in \N,
\]
we have that $(v_k)_{k \in \N}$ is a Cauchy sequence in $X$. By completeness, there exists
\[
v\!:= \lim_{k \to \infty} v_k \in X.
\]
Moreover,
\[
\|T^jv_k - y_j\| \leq \|T^jv_k - y^{(k)}_j\| + \sum_{t=2}^{k} \|y^{(t)}_j - y^{(t-1)}_j\|
  < \frac{\eps}{2^{k+1}} + \sum_{t=2}^{k} \frac{\eps}{2^t} < \frac{\eps}{2}\,,
\]
whenever $k \in \N$ and $|j| \leq m_k + p$. By fixing $j \in \Z$ and letting $k \to \infty$, we obtain
\begin{equation}\label{FS4}
\|T^jv - y_j\| \leq \frac{\eps}{2} \ \ \text{ for all } j \in \Z.
\end{equation}
By (\ref{FS1}) and (\ref{FS4}), the $\delta$-pseudotrajectory $(x_j)_{j \in \Z}$ is $\eps$-shadowed by the trajectory of $v$,
proving that $T$ has the shadowing property.

In the non-invertible case, the arguments are analogous.
\end{proof}

One important fact in the previous proof was that the $\eps$-$\delta$ association can be selected to be linear if we have a Banach space, essential for applying the induction process in the second step of the proof to get (d) with $\frac{\eps}{2^{k+1}}$ from finite shadowing when we have a $\frac{\delta}{2^{k-1}}$-pseudotrajectory. This is something that we cannot do in general with an $F$-norm for a Fréchet space. Actually, the equivalence between shadowing and finite shadowing may fail for ope\-rators on Fr\'echet spaces.

\begin{theorem}\label{Counterexample}
Let $H(\C)$ be the Fr\'echet space of all entire functions endowed with the compact-open topology.
For each $\lambda \in \C$ with $|\lambda| \not\in \{0,1\}$, the multiplication operator
\[
M_\lambda : f \in H(\C) \mapsto \lambda f \in H(\C)
\]
has the finite shadowing property but does not have the shadowing property.
\end{theorem}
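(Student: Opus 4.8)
The plan is to prove the two assertions separately, after reducing at the outset to the case $|\lambda|>1$: since $M_\lambda$ is a homeomorphism with inverse $M_{1/\lambda}$, and since the shadowing and finite shadowing properties are invariant under replacing a homeomorphism by its inverse (using that $M_\lambda$ and $M_\lambda^{-1}$ are uniformly continuous; cf.\ the Appendix), I may and will assume $|\lambda|>1$. I will also work with the seminorms $\|f\|_k=\sup_{|z|\le k}|f(z)|$ on $H(\C)$, which costs nothing since shadowing depends only on the uniform structure.

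For the finite shadowing property I would mimic the Banach-space argument for a scalar multiple of the identity, the point being that a \emph{finite} $\delta$-chain involves only finitely many error vectors, so no convergence is required. Given $\eps>0$, first choose $K\in\N$ with $2^{-K}<\eps/2$ and then $\delta<\min\{2^{-K},\,(\eps/2)(|\lambda|-1)/K\}$, so $\delta$ depends only on $\eps$ and $\lambda$. For a $\delta$-chain $(f_j)_{j=0}^N$ with errors $\delta_j:=\lambda f_j-f_{j+1}$, take the single shadowing function $g:=\lambda^{-N}f_N$. A telescoping computation gives $f_j-\lambda^j g=\sum_{m=0}^{N-1-j}\lambda^{-m-1}\delta_{j+m}$, and for $k\le K$ the inequality $d(\delta_i,0)\le\delta$ forces $\|\delta_i\|_k\le 2^k\delta$, hence $\|f_j-\lambda^j g\|_k\le 2^k\delta/(|\lambda|-1)$; splitting the metric at the index $K$ then gives $d(f_j,\lambda^j g)\le K\delta/(|\lambda|-1)+2^{-K}<\eps$ for all $j$. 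So $M_\lambda$ has the finite shadowing property.

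For the failure of shadowing I would fix $\eps\in(0,1/2)$ and, for each sufficiently small $\delta>0$ (larger $\delta$ then being automatic), construct a $\delta$-pseudotrajectory that is not $\eps$-shadowed. The construction: pick $K=K(\delta)\in\N$ with $2^{-K}(1+\delta)<\delta$, and take the kicks $\delta_i:=\delta\,(z/K)^{N_i}$ for $i\ge 0$, where $(N_i)$ is strictly increasing and each $N_i$ is chosen large (depending on $\delta$) so that $d(\delta_i,0)<\delta$ --- possible because $d(\delta\,(z/K)^{N},0)\to 2^{-K}(1+\delta)<\delta$ as $N\to\infty$. Set $f_j:=0$ for $j\le 0$ and $f_{j+1}:=M_\lambda f_j-\delta_j$ for $j\ge 0$; this is a genuine $\delta$-pseudotrajectory with $f_j=-\sum_{i<j}\lambda^{j-1-i}\delta_i$ for $j\ge 1$. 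Now suppose $g\in H(\C)$ satisfies $d(f_j,M_\lambda^j g)<\eps$ for all $j$. With $S_j:=\sum_{i<j}\lambda^{-(i+1)}\delta_i$ one has $f_j-\lambda^j g=-\lambda^j(g+S_j)$, so the $k=1$ term of the metric forces $\|g+S_j\|_1<2\eps\,|\lambda|^{-j}\to 0$. Since also $\|S_{j'}-S_j\|_1\le\delta\sum_{i\ge j}|\lambda|^{-(i+1)}\to 0$, the polynomials $S_j$ converge uniformly on $\{|z|\le 1\}$ to a function $h$, whence $g=-h$ there; but the Taylor coefficients of $h$ at $0$ are $a_{N_i}=\delta\,\lambda^{-(i+1)}K^{-N_i}$ (and $0$ for the other indices), and $|a_{N_i}|^{1/N_i}=\delta^{1/N_i}|\lambda|^{-(i+1)/N_i}K^{-1}$ converges to a finite positive number, so $\sum_n a_n z^n$ has finite radius of convergence. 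Thus the Taylor expansion of $g$ at $0$ has finite radius of convergence, contradicting that $g$ is entire; hence $M_\lambda$ does not have the shadowing property.

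The step I expect to be the real obstacle is the choice of the kicks in the last part. The naive attempt --- monomials $c_iz^{N_i}$ made uniformly small in the metric --- fails, because then $\sum_i\lambda^{-(i+1)}\delta_i$ always converges in $H(\C)$ and an entire shadowing function exists. The way around it is to use that the metric only requires each $\delta_i$ to be small on the disc of radius about $\log_2(1/\delta)$: allowing $\delta_i$ to be large beyond that disc makes the obstructing series $\sum_i\lambda^{-(i+1)}\delta_i$ have finite radius of convergence, which is precisely what prevents an entire shadowing function while leaving all finite pseudotrajectories shadowable.
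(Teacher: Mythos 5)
Your argument is correct, and both halves take routes genuinely different from the paper's. For the finite shadowing property, the paper restricts $M_\lambda$ to the disk algebra $A(D_\ell)$, observes that this restriction is a proper dilation on a Banach space and hence has the shadowing property, and then approximates the resulting shadowing point (which lives only in $A(D_\ell)$) by a polynomial to return to $H(\C)$; your telescoping estimate with the explicit shadowing point $\lambda^{-N}f_N$ proves the same thing by hand, which is more elementary and makes transparent that all that is used is $|\lambda|>1$ together with the fact that a finite chain carries only finitely many error terms, each controlled in the first $K$ seminorms. For the failure of shadowing, the paper fixes $g\in A(D_\ell)$ that does not extend to an entire function, builds a $\delta$-pseudotrajectory of polynomials tracking $(\lambda^jg)$, and concludes by analytic continuation; you instead construct explicit monomial kicks $\delta(z/K)^{N_i}$ whose obstructing series $\sum_i\lambda^{-(i+1)}\delta_i$ has finite radius of convergence. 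Both constructions exploit the identical phenomenon --- the metric only constrains finitely many seminorms, so the kicks may be enormous on large disks --- but yours is fully constructive, whereas the paper's hides the growth inside the abstract choice of a non-extendable boundary function; your closing remark correctly identifies this as the crux. One cosmetic point: for an arbitrary strictly increasing $(N_i)$ the quantity $|a_{N_i}|^{1/N_i}=\delta^{1/N_i}|\lambda|^{-(i+1)/N_i}K^{-1}$ need not converge; what your argument actually requires (and what holds, since $(i+1)/N_i$ is bounded) is only that $\liminf_i|a_{N_i}|^{1/N_i}>0$, which already forces a finite radius of convergence and the desired contradiction with $g$ being entire.
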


\begin{proof}
For each $k \in \N$, let
\[
D_k\!:= \{z \in \C : |z| < k\} \ \ \ \text{ and } \ \ \ \|f\|_k\!:= \sup_{z \in D_k} |f(z)| \ \text{ for } f \in H(\C).
\]
The sequence $(\|\cdot\|_k)_{k \in \N}$ of seminorms induces the compact-open topology on $H(\C)$.
Consider $H(\C)$ endowed with its canonical metric given by (\ref{Metric}).
For each $k \in \N$, let $A(D_k)$ be the ``disk algebra on the disk $D_k$'', that is,
\[
A(D_k)\!:= \{g : \ov{D_k} \to \C : g \text{ is continuous on } \ov{D_k} \text{ and analytic on } D_k\}
\]
endowed with the norm $\|\cdot\|_k$, which is a Banach space (actually, a Banach algebra).
In view of Proposition~\ref{InverseShad} in the Appendix, it is enough to consider the case $|\lambda| > 1$,
since $M_{\lambda^{-1}} = (M_\lambda)^{-1}$.
Thus, fix $\lambda \in \C$ with $|\lambda| > 1$ and let $T\!:= M_\lambda$.

Let us prove that $T$ has the finite shadowing property. For this purpose, fix $\eps > 0$ and choose $\ell \in \N$ such that
\[
d(f,0) < \eps \ \ \text{ whenever } f \in H(\C) \text{ and } \|f\|_\ell < \frac{\eps}{2}\,\cdot
\]
Let
\[
S : g \in A(D_\ell) \mapsto \lambda g \in A(D_\ell).
\]
Since $S$ is a proper dilation on the Banach space $A(D_\ell)$ (i.e., $\|S^{-1}\| < 1$), $S$ is a hyperbolic operator.
Hence, $S$ has the shadowing property. Let $\eta > 0$ be such that every $\eta$-pseudotrajectory of $S$ is ($\eps/2$)-shadowed
by a real trajectory of $S$. Choose $\delta > 0$ such that
\[
\|f\|_\ell \leq \eta \ \ \text{ whenever } f \in H(\C) \text{ and } d(f,0) \leq \delta.
\]
If $(f_j)_{j=0}^k$ is a $\delta$-chain for $T$, then $(f_j|_{\ov{D_\ell}})_{j=0}^k$ is an $\eta$-chain for $S$,
and so there exists $g \in A(D_\ell)$ such that
\[
\|f_j|_{\ov{D_\ell}} - S^j g\|_\ell < \frac{\eps}{2} \ \ \text{ for all } j \in \{0,\ldots,k\}.
\]
By the density of the polynomials in $A(D_\ell)$, there is a polynomial $f$ so close to $g$ in $A(D_\ell)$ that we have
\[
\|f_j - T^j f\|_\ell = \|f_j|_{\ov{D_\ell}} - S^j f\|_\ell < \frac{\eps}{2} \ \ \text{ for all } j \in \{0,\ldots,k\}.
\]
Thus, $d(f_j,T^j f) < \eps$ for all $j \in \{0,\ldots,k\}$, as it was to be shown.

Now, suppose that $T$ has the shadowing property. Let $\delta > 0$ be associated to $\eps\!:= 1/2$ according to this property.
Choose $\ell \in \N$ such that
\[
d(f,0) < \delta \ \ \text{ whenever } f \in H(\C) \text{ and } \|f\|_\ell < \frac{\delta}{2}\,\cdot
\]
Choose a function $g \in A(D_\ell)$ that cannot be extended to an entire function.
We shall construct inductively a sequence $(f_j)_{j \in \N_0}$ of polynomials such that:
\begin{itemize}
\item [(A)] $\displaystyle \|\lambda^j g - f_j\|_\ell < \frac{\delta}{2 |\lambda|}$ for all $j \in \N_0$;
\item [(B)] $\displaystyle \|\lambda f_{j-1} - f_j\|_\ell < \frac{\delta}{2}$ for all $j \in \N$.
\end{itemize}
We begin by choosing a polynomial $f_0$ with $\|g - f_0\|_\ell < \frac{\delta}{2|\lambda|}$.
Assume $k \in \N_0$ and $f_0,\ldots,f_k$ already chosen with the desired properties. Since
\[
\|\lambda^{k+1} g - \lambda f_k\|_\ell = |\lambda| \|\lambda^k g - f_k\|_\ell < \frac{\delta}{2}\,,
\]
there is a polynomial $p_k$ so close to $\lambda^{k+1} g - \lambda f_k$ in $A(D_\ell)$ that we have
\[
\|p_k\|_\ell < \frac{\delta}{2} \ \ \text{ and } \ \
\|\lambda^{k+1} g - \lambda f_k - p_k\|_\ell < \frac{\delta}{2 |\lambda|}\,\cdot
\]
Hence, it is enough to define $f_{k+1}\!:= \lambda f_k + p_k$.
By (B), $(f_j)_{j \in \N_0}$ is a $\delta$-pseudotrajectory of~$T$.
Therefore, there exists $f \in H(\C)$ such that
\[
d(f_j,T^j f) < \frac{1}{2} \ \ \text{ for all } j \in \N_0.
\]
This implies that
\[
\|f_j - \lambda^j f\|_1 < 1 \ \ \text{ for all } j \in \N_0.
\]
By (A), we obtain
\[
\|\lambda^j g - \lambda^j f\|_1 < 1 + \frac{\delta}{2 |\lambda|} \ \ \text{ for all } j \in \N_0.
\]
Thus, $g = f$ on $D_1$. By the principle of analytic continuation, $g = f$ on $\ov{D_\ell}$.
This contradicts our choice of $g$ as an element of $A(D_\ell)$ that cannot be extended to an entire function.
Our conclusion is that $T$ does not have the shadowing property.
\end{proof}

\begin{remark}\label{Remarks}
(a) The above proof actually shows that $M_\lambda$ does not have the positive sha\-dowing property whenever $|\lambda| > 1$.
Thus, the notions of finite shadowing and positive shadowing do not coincide in general for invertible operators on Fr\'echet spaces.\\
(b) For $0 < |\lambda| < 1$, the operator $M_\lambda$ has the positive shadowing property.
Indeed, this follows easily from the fact that if $\ell \in \N$ and if a sequence $(f_j)_{j \in \N_0}$ in $H(\C)$ satisfies
\[
\|M_\lambda f_j - f_{j+1}\|_\ell \leq \delta \ \ \text{ for all } j \in \N_0,
\]
then
\[
\|f_j - (M_\lambda)^j f_0\|_\ell \leq \frac{\delta}{1 - |\lambda|} \ \ \text{ for all } j \in \N_0.
\]
Thus, the notions of shadowing and positive shadowing do not coincide in general for invertible operators on Fr\'echet spaces.
Since $(M_\lambda)^{-1} = M_{\lambda^{-1}}$ does not have the positive shadowing property, this also shows that we cannot
replace shadowing by positive sha\-dowing in Proposition~\ref{InverseShad}.
\end{remark}

\begin{remark}
We observe that completeness is essential for the validity of Theorem~\ref{EquivSh}.
For instance, let $X$ be the vector space of all sequences $(x_n)_{n \in \N}$ of scalars with finite support endowed
with any $\ell_p$-norm ($1 \leq p \leq \infty$) and let $T \in GL(X)$ be twice the identity operator on $X$.
Since twice the identity operator on $c_0(\N)$ or $\ell_p(\N)$ ($1 \leq p < \infty$) has the shadowing property
(because it is hyperbolic), it follows that $T$ has the finite shadowing property.
However, $T$ does not have the positive shadowing property.
In fact, given any $\delta > 0$, consider the sequence $(x^{(j)})_{j \in \N_0}$ in $X$ given by
\[
x^{(0)}\!:= 0 \ \ \text{ and } \ \ x^{(j)}\!:= (2^{j-1}\delta,2^{j-2}\delta,\ldots,2\delta,\delta,0,0,\ldots) \text{ for } j \geq 1.
\]
Then $(x^{(j)})_{j \in \N_0}$ is a $\delta$-pseudotrajectory of $T$, but it cannot be $1$-shadowed by a trajectory of $T$
since each element of $X$ has finite support.
\end{remark}


\section{Chaotic behaviors in the presence of the finite shadowing property}\label{Chaos}

Our goal in this section is to investigate some types of chaotic behavior for operators with the finite shadowing property.

We begin by recalling some notions of chaotic behavior. Let $X$ be a topological space and $f : X \to X$ a map.
Given sets $A,B \subset X$, the {\em return set of $f$ from $A$ to $B$} is defined~by
\[
N_f(A,B)\!:= \{n \in \N_0 : f^n(A) \cap B \neq \emptyset\}.
\]
Recall that $f$ is {\em topologically transitive} (resp.\ {\em topologically ergodic}, {\em topologically mixing})
if for any pair $A,B$ of nonempty open subsets of $X$, the return set $N_f(A,B)$ is nonempty (resp.\ syndetic, cofinite), where
a set $I\!:= \{n_1 < n_2 < \cdots\} \subset \N_0$ is {\em syndetic} when it has bounded gaps, that is, $\sup_{k} (n_{k+1} - n_k) < \infty$.
Moreover, $f$ is {\em topologically weakly mixing} if $f \times f$ is topologically transitive, that is,
$N_f(A_1,B_1) \cap N_f(A_2,B_2) \neq \emptyset$ for any $4$-tuple $A_1,A_2,B_1,B_2$ of nonempty open subsets of $X$.
In the sequel we will omit the word ``topologically'' from these notions.
If $X$ is a second countable Baire space without isolated points and $f$ is continuous,
then {\em Birkhoff's transitivity theorem} asserts that $f$ is transitive if and only if it admits a dense orbit,
that is, there exists a point $x \in X$ whose orbit $\Orb(x,f)\!:= \{f^n(x) : n \in \N_0\}$ is dense in $X$.

In the setting of linear dynamics, the existence of a dense orbit is known under the name of {\em hypercyclicity}.
Hence, a continuous linear operator $T$ on a topological vector space $X$ is {\em hypercyclic} if it admits a dense orbit.
Recall also that $T$ is {\em Devaney chaotic} if it is transitive and has a dense set of periodic points.
Hypercyclic and Devaney chaotic operators have been extensively studied during the last 30 years.
We refer the reader to the books \cite{FBayEMat09,KGroAPer11} for an overview of the area of linear dynamics up to 2010.

Our first theorem in this section will give us a characterization of mixing for continuous maps with finite shadowing
in the setting of uniform spaces.
In order to state and prove the theorem, let us first recall the notion of chain transitivity and the finite shadowing property
in this more general setting. We refer the reader to \cite[Chapter~II]{NBou1989} for the basics on uniform spaces.

Consider a uniform space $X$ with uniformity $\mathcal{U}$ and a map $f : X \to X$.
Given  $V\in\mathcal{U}$, a {\em $V$-chain for $f$} is a finite sequence $(x_j)_{j=0}^k$ in $X$ satisfying
\[
(f(x_j),x_{j+1}) \in V \ \ \text{ for all } n \in \{0,\ldots,k-1\}.
\]
In this case, we also say that $(x_j)_{j=0}^k$ is a {\em $V$-chain for $f$ from $x_0$ to $x_k$}.
The map $f$ has the {\em finite shadowing property} if for every $V\in\mathcal{U}$, there exists  $U\in \mathcal{U}$
such that for each $U$-chain $(x_j)_{j=0}^k$ for $f$, there exists $x \in X$ with
\[
(x_j,f^j(x)) \in V \ \ \text{ for all } j \in \{0,\ldots,k\}.
\]
A point $x \in X$ is a {\em chain recurrent point} of $f$ if for every $V\in \mathcal{U}$, there is a $V$-chain for $f$ from $x$ to itself.
The set $CR(f)$ of all chain recurrent points of $f$ is called the {\em chain recurrent set} of $f$ and $f$ is said to be {\em chain recurrent}
if $CR(f) = X$. Moreover, $f$ is said to be {\em chain transitive} if for every $x,y \in X$ and every $V\in\mathcal{U}$,
there is a $V$-chain for $f$ from $x$ to $y$.
We recall that for each point $x \in X$, the sets of the form
\[
V(x)\!:= \{y \in X : (x,y) \in V\},
\]
as $V$ runs through the uniformity $\mathcal{U}$, constitute a fundamental system of neighborhoods of $x$ in $X$.
We also recall that $A \subset X$ is a {\em $V$-small set} if $A \times A \subset V$.

\begin{theorem}\label{Equi1}
Consider a uniform space $X$ with uniformity $\mathcal{U}$ and a continuous map $f : X \to X$.
If $f$ has the finite shadowing property, then $f$ is mixing if and only if the following conditions hold:
\begin{itemize}
\item [(I)] $f$ is chain transitive;
\item [(II)] For each $V\in \mathcal{U}$, there is a $V$-small set $A \subset X$ with $N_f(A,A)$ cofinite.
\end{itemize}
\end{theorem}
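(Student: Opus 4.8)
The plan is to prove the two implications separately, invoking the finite shadowing hypothesis only for the converse. Necessity of (I) and (II) uses only the continuity of $f$. For (II), given $V\in\mathcal{U}$ I would choose a symmetric $W\in\mathcal{U}$ with $W\circ W\subset V$ and any point $x\in X$; then $A:=\operatorname{int}W(x)$ is a nonempty open set that is $V$-small (if $y,z\in W(x)$, then $(y,x),(x,z)\in W$, so $(y,z)\in W\circ W\subset V$), and mixing gives that $N_f(A,A)$ is cofinite. For (I), given $x,y\in X$ and $V\in\mathcal{U}$, I would again take a symmetric $W$ with $W\circ W\subset V$, use continuity of $f$ at $x$ to find $W'\subset W$ in $\mathcal{U}$ with $f(W'(x))\subset W(f(x))$, and apply transitivity (a consequence of mixing) to the open sets $\operatorname{int}W'(x)$ and $\operatorname{int}W(y)$ to obtain $n\geq1$ and $z\in\operatorname{int}W'(x)$ with $f^{n}(z)\in\operatorname{int}W(y)$; then $(x,f(z),f^{2}(z),\dots,f^{n-1}(z),y)$ is a $V$-chain from $x$ to $y$. (Taking $x=y$ shows $f$ is even chain recurrent, so chain transitivity is indeed the natural hypothesis in (I).)

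For the converse I would first reduce to showing that (I) and (II) force $f$ to be \emph{chain mixing}, i.e.\ that for all $a,b\in X$ and all $U\in\mathcal{U}$ there is $n_{0}$ such that $f$ admits a $U$-chain from $a$ to $b$ of length $n$ for every $n\geq n_{0}$. Granting chain mixing, let $U_{1},U_{2}$ be nonempty open sets, pick $a\in U_{1}$, $b\in U_{2}$ and $V\in\mathcal{U}$ with $V(a)\subset U_{1}$ and $V(b)\subset U_{2}$; by finite shadowing there is $U\in\mathcal{U}$ such that every $U$-chain is $V$-shadowed by a genuine orbit. For each $n\geq n_{0}$, applying this to a $U$-chain $(x_{j})_{j=0}^{n}$ from $a$ to $b$ gives a point $x$ with $(x_{j},f^{j}(x))\in V$ for all $j$, whence $x\in V(a)\subset U_{1}$ and $f^{n}(x)\in V(b)\subset U_{2}$, so $n\in N_{f}(U_{1},U_{2})$. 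Thus $N_{f}(U_{1},U_{2})$ is cofinite and $f$ is mixing.

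It remains to produce chain mixing from (I) and (II), and this is where the real work lies. Fix $a,b\in X$ and $V\in\mathcal{U}$, and pick $W\in\mathcal{U}$ with $W\circ W\subset V$. By (II) there are a nonempty $W$-small set $A$ and $n_{1}\in\N$ with $f^{m}(A)\cap A\neq\emptyset$ for every $m\geq n_{1}$; fix $c\in A$. By (I) there are a $W$-chain $(z_{j})_{j=0}^{p}$ from $a$ to $c$ and a $W$-chain $(w_{j})_{j=0}^{q}$ from $c$ to $b$. Set $n_{0}:=p+q+\max\{n_{1},1\}$. Given $n\geq n_{0}$, put $m:=n-p-q$ and choose $y\in A$ with $f^{m}(y)\in A$; I would then splice the block $a=z_{0},z_{1},\dots,z_{p-1}$ followed by $y$ (legitimate because $(f(z_{p-1}),c)\in W$ and $c,y\in A$ yield $(f(z_{p-1}),y)\in W\circ W\subset V$), then the orbit block $y,f(y),\dots,f^{m-1}(y)$ followed by $c$ (legitimate because $f^{m}(y),c\in A$ yield $(f^{m}(y),c)\in W\subset V$), and finally the block $c=w_{0},w_{1},\dots,w_{q}=b$; all transitions lie in $V$, the sequence runs from $a$ to $b$, and its length is exactly $p+m+q=n$. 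The main obstacle is precisely this recombination: a \emph{single} $W$-small set from (II) and two \emph{fixed} chains from (I) must be used to realise \emph{every} sufficiently large length simultaneously, with careful bookkeeping of the entourage compositions so that the glued sequence is genuinely a $V$-chain. Finite shadowing is not needed for this step; it enters only, and routinely, in the reduction above.
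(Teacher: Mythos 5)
Your proposal is correct and follows essentially the same route as the paper: the heart of both arguments is splicing a $W$-chain from $x$ into the $W$-small set $A$, an orbit segment of adjustable length $m\geq n_1$ inside $A$, and a $W$-chain out to $y$, with $W\circ W$ contained in the finite-shadowing entourage, and then shadowing the resulting chain to land in the prescribed open sets. The only difference is organizational — you isolate chain mixing as an explicit intermediate step and write out the (routine) necessity direction, which the paper leaves as "clear".
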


\begin{proof}
Since the necessity of the conditions is clear, let us prove their sufficiency.
Let $A$ and $B$ be nonempty open sets in $X$. Choose points $x \in A$ and $y \in B$, and let $V \in \mathcal{U}$ be such that
\[
V(x) \subset A \ \ \ \text{ and } \ \ \ V(y) \subset B.
\]
Let $U \in \mathcal{U}$ be associated to $V$ according to the definition of the finite shadowing property,
and let $W \in \mathcal{U}$ satisfy
\[
W \circ W\!:= \{(a,c) : (a,b) \in W \text{ and } (b,c) \in W \text{ for some } b \in X\} \subset U.
\]
By condition (II), there is a $W$-small set $Z \subset X$ such that $N_f(Z,Z)$ is cofinite.
Choose a point $z \in Z$ and let $m \in \N$ be such that $n \in N_f(Z,Z)$ for all $n \geq m$.
By condition (I), there exist $W$-chains $(x_j)_{j=0}^k$ and $(y_j)_{j=0}^\ell$ for $f$ from $x$ to $z$ and from $z$ to $y$, respectively.
Given $n \geq m$, there exists $z' \in Z$ such that $f^n(z') \in Z$. Hence,
\[
(u_j)_{j=0}^{k+n+\ell}\!:= (x_0,x_1,\ldots,x_{k-1},z',f(z'),\ldots,f^{n-1}(z'),y_0,y_1,\ldots,y_\ell)
\]
is a $U$-chain for $f$ from $x$ to $y$, and so there exists $u \in X$ such that
\[
(u_j,f^j(u)) \in V \ \ \text{ for all } j \in \{0,\ldots,k+n+\ell\}.
\]
In particular, $u \in A$ and $f^{k+n+\ell}(u) \in B$. This proves that
\[
f^t(A) \cap B \neq \emptyset \ \ \text{ for all } t \geq t_0,
\]
where $t_0\!:= k+m+\ell$.
\end{proof}

\begin{remark}
Condition (II) in Theorem~\ref{Equi1} cannot be omitted in general.
For instance, consider $X\!:= \{0,1\}$ endowed with its discrete uniformity and let $f : X \to X$ be given by $f(0)\!:= 1$ and $f(1)\!:= 0$.
Then $f$ is chain transitive and has the shadowing property, but it is not mixing.
\end{remark}

As an application of the above theorem, we obtain the following result on linear dynamics.

\begin{theorem}\label{Equi2}
Suppose that a continuous linear operator $T$ on a topological vector space $X$ has the finite shadowing property.
Then the following assertions are equivalent:
\begin{itemize}
\item [(i)] $T$ is chain recurrent;
\item [(ii)] $T$ is transitive;
\item [(iii)] $T$ is ergodic;
\item [(iv)] $T$ is weakly mixing;
\item [(v)] $T$ is mixing.
\end{itemize}
Moreover, $T$ is Devaney chaotic if and only if $T$ has a dense set of periodic points.
\end{theorem}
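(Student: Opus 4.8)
The plan is to deduce the whole chain of equivalences from Theorem~\ref{Equi1}, applied to $X$ equipped with its canonical uniformity (a base of entourages being the sets $\{(x,y):x-y\in U\}$ with $U$ a neighbourhood of $0$), together with a handful of elementary observations, the crucial one being that $0$ is a fixed point of $T$. The implications $(v)\Rightarrow(iii)\Rightarrow(ii)$ and $(v)\Rightarrow(iv)\Rightarrow(ii)$ hold for any continuous selfmap of any topological space and use nothing about $T$: a cofinite subset of $\N_0$ is syndetic and meets every other cofinite set, so mixing implies both ergodicity and weak mixing; weak mixing is by definition transitivity of $T\times T$, which (taking the two pairs of open sets to coincide) gives transitivity; and a syndetic set is nonempty, so ergodicity gives transitivity. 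Hence it remains only to establish $(ii)\Rightarrow(i)$ and $(i)\Rightarrow(v)$: once these are available, the cycle $(i)\Rightarrow(v)\Rightarrow(ii)\Rightarrow(i)$ closes and (i)--(v) are all equivalent.

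For $(ii)\Rightarrow(i)$ I would invoke the basic fact, recorded in the Appendix, that a transitive continuous linear operator is chain recurrent (indeed chain transitive). A self-contained argument also works: a transitive map on a space without isolated points has no wandering open set, because two disjoint nonempty open subsets of such a set would have empty return set, contradicting transitivity; hence $\Omega(T)=X$, and $\Omega(T)\subseteq CR(T)$ always, so $T$ is chain recurrent. The finite shadowing property plays no role here.

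The heart of the matter is $(i)\Rightarrow(v)$, where Theorem~\ref{Equi1} is exactly the right tool: since $T$ has the finite shadowing property, $T$ is mixing as soon as conditions (I) and (II) of that theorem are met. Condition (II) is automatic for \emph{every} continuous linear operator, and this is where the fixed point $0$ enters: for any entourage $V$ the singleton $A:=\{0\}$ is $V$-small, since $A\times A=\{(0,0)\}$ lies on the diagonal and every entourage contains the diagonal, and $N_T(A,A)=\N_0$ is cofinite because $T0=0$. Condition (I) requires that $T$ be chain transitive, and here I would use a second basic structural fact from the Appendix: for a continuous linear operator, chain recurrence, chain transitivity and chain mixing are equivalent. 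Thus $(i)$ yields condition (I), Theorem~\ref{Equi1} yields that $T$ is mixing, and the cycle is complete. I expect this appeal to ``chain recurrence $\Leftrightarrow$ chain transitivity for operators'' to be the only non-routine ingredient; proving it from scratch takes a somewhat delicate argument exploiting linearity to drift pseudo-orbits between scales and to splice loops of compatible lengths around $0$.

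Finally, for the ``moreover'' statement, one direction is the definition, and the other amounts to showing that a dense set of periodic points forces transitivity. Every periodic point $x$, say with $T^nx=x$, is a chain recurrent point, since its genuine orbit $x,Tx,\dots,T^{n-1}x,x$ is a $V$-chain from $x$ to itself for every entourage $V$. As $CR(T)$ is closed (in fact a closed linear subspace --- again a basic fact from the Appendix) and contains the dense set of periodic points, we get $CR(T)=X$, i.e.\ $T$ is chain recurrent; by the equivalence $(i)\Leftrightarrow(ii)$ just proved, $T$ is transitive, and therefore $T$ is transitive with a dense set of periodic points, that is, Devaney chaotic. One should keep in mind that all of this is claimed for arbitrary topological vector spaces, with no separability or Baire assumption, so none of the arguments may be routed through hypercyclicity.
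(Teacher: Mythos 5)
Your proposal is correct and follows essentially the same route as the paper: reduce everything to the implication (i)~$\Rightarrow$~(v) via Theorem~\ref{Equi1}, noting that condition (II) is automatic because $0$ is a fixed point (the paper uses any $V$-small $A$ containing $0$, you use $A=\{0\}$) and that condition (I) comes from the equivalence of chain recurrence and chain transitivity for operators (Proposition~\ref{CR-CT-CM}), and then derive the ``moreover'' part from the closedness of $CR(T)$ exactly as the paper does. The only difference is that you spell out the routine implications and the step (ii)~$\Rightarrow$~(i) explicitly, which the paper leaves implicit.
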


\begin{proof}
Recall that a basis for the uniformity of $X$ is given by the sets
\[
\widetilde{V}\!:= \{(x,y) \in X \times X : x - y \in V\},
\]
as $V$ runs through the set of all neighborhoods of $0$ in $X$.

In order to prove the equivalences from (i) to (v), it is enough to show that (i) implies~(v).
This follows from Theorem~\ref{Equi1}, because condition~(II) is automatically true in the present case
(note that $N_T(A,A) = \N_0$ whenever $0 \in A$) and Proposition~\ref{CR-CT-CM} in the Appendix gives condition~(I).

Now, suppose that $T$ has a dense set of periodic points. Since the set $CR(T)$ of all chain recurrent points of $T$ is closed in $X$
(Proposition~\ref{InvClosedSubsp}), we have that $T$ is chain recurrent.
Hence, by (i)~$\Rightarrow$~(ii), $T$ is transitive, and so it is Devaney chaotic.
\end{proof}

The above theorem extends \cite[Theorem~3.3]{AntManVar22} and \cite[Corollary~3.9]{AntManVar22} from normed spaces
and separable Banach spaces, respectively, to arbitrary topological vector spaces, but we observe that the arguments in
\cite{AntManVar22} also work in the more general context.

Our next goal is to establish a characterization of dense distributional chaos for operators with the finite shadowing property
in the setting of Fr\'echet spaces. For this purpose, let us recall the notion of distributional chaos in metric spaces and some
related concepts in the context of linear dynamics.

Given a metric space $X$, recall that $f : X \to X$ is said to be {\em distributionally chaotic} if there exist
an uncountable set $\Gamma \subset X$ and an $\eps > 0$ such that each pair $(x,y)$ of distinct elements of $\Gamma$
is an {\em $\eps$-distributionally chaotic pair} for $f$, in the sense that
\[
\udens\{n \in \N : d(f^n(x),f^n(y)) \geq \eps\} = 1
\]
and
\[
\udens\{n \in \N : d(f^n(x),f^n(y)) < \delta\} = 1 \ \text{ for all } \delta > 0,
\]
where $\udens(I)$ stands for the {\em upper density} of the subset $I$ of $\N$, that is,
\[
\udens(I)\!:= \limsup_{n \to \infty} \frac{\card(I \cap [1,n])}{n}\,\cdot
\]

If $X$ is a Fr\'echet space whose topology is induced by an increasing sequence $(\|\cdot\|_k)_{k \in \N}$ of seminorms and
$T : X \to X$ is a continuous linear operator, recall that $x \in X$ is called a {\em distributionally irregular vector} for $T$
if there exist $m \in \N$ and $I,J \subset \N$ with $\udens(I) = \udens(J) = 1$ such that
\[
\lim_{n \in I} T^n x = 0 \ \ \ \text{ and } \ \ \ \lim_{n \in J} \|T^n x\|_m = \infty.
\]
It was proved in \cite{BerBonMulPer13} that:
\[
T \textit{ is distributionally chaotic} \ \Leftrightarrow \ T \textit{ admits a distributionally irregular vector}.
\]
We shall follow the terminology of \cite{GriMatMen21} and say that $T$ is {\em densely distributionally chaotic} if it admits a dense
set of distributionally irregular vectors. It follows from results in \cite{BerBonMulPer13} that this is equivalent to the existence of a
{\em residual set} of distributionally irregular vectors.

Before stating our theorem, let us introduce the following notations: For each continuous linear operator $T$ on a Fr\'echet space $X$,
we define the sets
\begin{align*}
I_0(T) &\!:= \{x \in X : \text{for each } \delta > 0,\, \text{there is a $\delta$-chain for $T$ from $x$ to $0$}\},\\
O_0(T) &\!:= \{x \in X : \text{for each } \delta > 0,\, \text{there is a $\delta$-chain for $T$ from $0$ to $x$}\}.
\end{align*}

\begin{lemma}\label{I0O0}
The sets $I_0(T)$ and $O_0(T)$ are $T$-invariant closed subspaces of $X$.
\end{lemma}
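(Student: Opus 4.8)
The plan is to prove three things about each of the two sets: that it is a linear subspace, that it is closed, and that it is $T$-invariant. I will treat $I_0(T)$ in detail and then indicate that $O_0(T)$ is handled symmetrically (reversing the roles of source and target of the chains). Throughout I will work with the canonical invariant metric $d$ from \eqref{Metric} and use freely that concatenation of chains is again a chain, possibly after adjusting the tolerance.

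\medskip

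For the subspace property, let $x, y \in I_0(T)$ and $\alpha, \beta \in \K$; I want to show $\alpha x + \beta y \in I_0(T)$. Fix $\delta > 0$. By continuity of the maps $(u,v) \mapsto \alpha u + \beta v$ and of $T$ at the relevant points, there is $\delta' > 0$ such that whenever $(u_j)_{j=0}^k$ is a $\delta'$-chain for $T$ from $x$ to $0$ and $(v_j)_{j=0}^k$ is a $\delta'$-chain for $T$ from $y$ to $0$ \emph{of the same length} (which can be arranged by padding the shorter one with extra iterates, or—more simply—by taking two chains and then concatenating one with a constant tail at $0$, noting $T0=0$), the sequence $(\alpha u_j + \beta v_j)_{j=0}^k$ is a $\delta$-chain for $T$ from $\alpha x + \beta y$ to $0$. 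Here the key points are that $T$ is linear, so $T(\alpha u_j + \beta v_j) = \alpha T u_j + \beta T v_j$, and that $d(\alpha T u_j + \beta T v_j, \alpha u_{j+1} + \beta v_{j+1})$ can be made $< \delta$ by the uniform continuity of the linear map $(a,b)\mapsto \alpha a + \beta b$ on bounded neighborhoods—or, cleaner, by first passing from the metric to the seminorms $\|\cdot\|_m$ and using $\|\alpha a + \beta b\|_m \le |\alpha|\,\|a\|_m + |\beta|\,\|b\|_m$. The endpoints match since $\alpha x + \beta y$ is the $0$-th term and $\alpha 0 + \beta 0 = 0$ is the last term. This gives a $\delta$-chain for every $\delta > 0$, hence $\alpha x + \beta y \in I_0(T)$.

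\medskip

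For $T$-invariance, suppose $x \in I_0(T)$; I claim $Tx \in I_0(T)$. Fix $\delta > 0$ and take a $\delta$-chain $(x_0, x_1, \ldots, x_k)$ for $T$ from $x = x_0$ to $0 = x_k$. Then $(x_1, x_2, \ldots, x_k)$ is a $\delta$-chain for $T$ from $Tx$ to $0$: indeed $d(Tx, x_1) = d(Tx_0, x_1) \le \delta$ shows we may even prepend, but in fact deleting the first term works directly because $(x_1,\ldots,x_k)$ already satisfies $d(Tx_j, x_{j+1}) \le \delta$ for $j = 1, \ldots, k-1$, and we only need the new chain to start at $Tx$. (If $k = 1$ the degenerate chain $(x_1) = (0)$ together with one extra step from $Tx$, or simply noting that one can always lengthen the original chain, handles the edge case.) Hence $Tx \in I_0(T)$.

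\medskip

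The closedness is the only step requiring a genuine estimate, and I expect it to be the main (though still routine) obstacle. Let $(x^{(i)})_{i \in \N}$ be a sequence in $I_0(T)$ with $x^{(i)} \to x$ in $X$; I want $x \in I_0(T)$. Fix $\delta > 0$. By continuity of $T$ at $x$ and the convergence $x^{(i)} \to x$, choose $i$ large enough that both $d(x, x^{(i)}) < \delta$ and $d(Tx, Tx^{(i)}) < \delta/2$. Since $x^{(i)} \in I_0(T)$, there is a $(\delta/2)$-chain $(y_0, y_1, \ldots, y_k)$ for $T$ from $x^{(i)}$ to $0$. Now form the chain $(x, y_1, y_2, \ldots, y_k)$ by replacing the initial term $y_0 = x^{(i)}$ with $x$: for $j = 1, \ldots, k-1$ the condition $d(Ty_j, y_{j+1}) \le \delta/2 \le \delta$ is inherited, while the first step satisfies $d(Tx, y_1) \le d(Tx, Tx^{(i)}) + d(Tx^{(i)}, y_1) < \delta/2 + \delta/2 = \delta$. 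This is a $\delta$-chain for $T$ from $x$ to $0$, and $\delta$ was arbitrary, so $x \in I_0(T)$; thus $I_0(T)$ is closed. The argument for $O_0(T)$ is the mirror image: for the subspace and closedness steps one perturbs the \emph{terminal} term of a chain instead of the initial one, and for invariance one appends the iterate $T(\text{last term})$ to the end of a chain from $0$ to $x$ to obtain one from $0$ to $Tx$.
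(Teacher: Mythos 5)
Your subspace and closedness arguments for $I_0(T)$ are correct (the padding of the shorter chain with zeros, the seminorm estimate for $\alpha u_j+\beta v_j$, and the replacement of the initial term $y_0=x^{(i)}$ by $x$ using continuity of $T$ all work), and your $O_0(T)$ invariance argument --- appending $Tx=T(\text{last term})$ to a chain from $0$ to $x$ --- is also fine. The paper itself leaves the proof to the reader, but the techniques are exactly those used for $CR(T)$ in Proposition~\ref{InvClosedSubsp} of the Appendix.

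The genuine gap is in the $T$-invariance of $I_0(T)$. A ``$\delta$-chain for $T$ from $a$ to $b$'' is by definition a sequence whose first term \emph{is} $a$ and whose last term \emph{is} $b$. Deleting the first term of a $\delta$-chain $(x_0,\ldots,x_k)$ from $x$ to $0$ produces a $\delta$-chain from $x_1$ to $0$, and $x_1$ is only within $\delta$ of $Tx$, not equal to it; so $(x_1,\ldots,x_k)$ is not a chain ``from $Tx$ to $0$''. Prepending $Tx$ does not repair this either, since the resulting first step would require $d(T(Tx),x_1)\leq\delta$, which is not what the hypothesis $d(Tx_0,x_1)\leq\delta$ gives. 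The correct argument must invoke the continuity of $T$ \emph{before} choosing the chain: given $\delta>0$, pick $\delta'>0$ such that $d(u,v)\leq\delta'$ implies $d(Tu,Tv)\leq\delta$ (possible since $T$ is continuous and linear, hence uniformly continuous for the invariant metric), take a $\delta'$-chain $(x_j)_{j=0}^k$ from $x$ to $0$, and observe that $(Tx_j)_{j=0}^k$ is a $\delta$-chain from $Tx$ to $T0=0$. This is precisely how the paper proves $T$-invariance of $CR(T)$ in Proposition~\ref{InvClosedSubsp}, and it highlights why $I_0$ and $O_0$ are not symmetric here: for $O_0$ one can extend a chain forward by an exact iterate, whereas for $I_0$ one cannot extend backward without inverting $T$, so one must push the whole chain forward by $T$ instead.
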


As a consequence, if both $I_0(T)$ and $O_0(T)$ are dense in~$X$, then $T$ is chain recurrent.
The converse is also true, because the notions of chain recurrence and chain transitivity coincide for linear operators
(Proposition~\ref{CR-CT-CM}). We leave the proof of the above lemma to the reader.

\begin{theorem}\label{DistChaos}
Suppose that a continuous linear operator $T$ on a Fr\'echet space $X$ has the finite shadowing property.
Then $T$ is densely distributionally chaotic if and only if the following conditions hold:
\begin{itemize}
\item [(I)] $I_0(T)$ is dense in $X$;
\item [(II)] There exists $\gamma > 0$ such that for every $\delta > 0$, there is a $\delta$-chain for $T$ from $0$ to a vector
$x \in X$ satisfying
\[
\udens\{j \in \N_0 : d(T^jx,0) \geq \gamma\} = 1.
\]
\end{itemize}
If $X$ is a Banach space, then we can replace condition (II) by the following weaker condition:
\begin{itemize}
\item [(II')] There exists $\gamma > 0$ such that for every $\delta > 0$, there is a $\delta$-chain for $T$ from $0$ to a vector
$x \in X$ satisfying
\[
\udens\{j \in \N_0 : d(T^jx,0) \geq \gamma\} \geq \gamma.
\]
\end{itemize}
\end{theorem}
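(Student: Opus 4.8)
The plan is to prove the equivalence in two directions, using the characterization of distributional chaos via distributionally irregular vectors and the finite shadowing property to convert $\delta$-chains into genuine orbits. For the necessity of conditions (I) and (II), suppose $T$ is densely distributionally chaotic, so it admits a residual set $\mathcal{R}$ of distributionally irregular vectors. Given $x_0 \in X$ and $\delta > 0$, I would pick a distributionally irregular vector $x$ close to $x_0$ with $\lim_{n \in I} T^n x = 0$ for some $I$ with $\udens(I) = 1$; truncating the orbit at some large $N \in I$ gives a finite $\delta$-chain from (a perturbation of) $x_0$ to a point near $0$, which upon adjusting endpoints yields a $\delta$-chain from $x_0$ to $0$, so $x_0 \in \overline{I_0(T)}$ and (I) follows. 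For (II), using the same distributionally irregular vector $x$ and the set $J$ with $\udens(J) = 1$ along which $\|T^n x\|_m \to \infty$, one sees that $\udens\{j : d(T^j x, 0) \ge \gamma\} = 1$ for a suitable fixed $\gamma > 0$ (coming from the seminorm index $m$); concatenating a short $\delta$-chain from $0$ to a small perturbation of $x$ with the orbit of $x$ produces the required $\delta$-chain from $0$ to a vector with the desired upper density property. The Banach-space case is immediate since (II) implies (II').

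For the sufficiency — the substantive direction — I would use (I), (II) and the finite shadowing property to construct a residual set of distributionally irregular vectors, following the scheme in \cite{BerBonMulPer13} where it suffices to build a single distributionally irregular vector in a dense $G_\delta$ way, i.e., to verify a suitable ``distributionally irregular criterion.'' Concretely, fix a dense sequence $(z_i)_{i \in \N}$ in $X$. The idea is to build, for each $i$, vectors arbitrarily close to $z_i$ whose orbits alternate between long stretches staying near $0$ (in the metric $d$) and long stretches with $d(T^j \cdot, 0) \ge \gamma$, with the lengths growing fast enough that both relevant sets have upper density $1$. To do this one combines: (a) condition (I), which lets one steer from any vector toward $0$ along a $\delta$-chain; (b) condition (II), which lets one steer from $0$ to a vector whose orbit has a large-density set of ``far from $0$'' times, measured by a fixed $\gamma$; and (c) the finite shadowing property, which converts each finite concatenation of such $\delta$-chains into a genuine orbit $\eps$-close to the prescribed pseudotrajectory on the relevant time window. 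Iterating and taking $\delta, \eps \to 0$ along the construction, one obtains at each stage an open dense set of ``good'' vectors, and the intersection over all stages is a residual set of distributionally irregular vectors.

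The main obstacle is the bookkeeping needed to guarantee that the constructed vector is genuinely distributionally irregular rather than merely irregular: one needs the ``near $0$'' time set and the ``far from $0$'' time set to each have \emph{upper} density $1$ simultaneously, which forces the far-stretches supplied by (II) and the near-stretches supplied by (I) to be interleaved in blocks whose lengths grow sufficiently fast (a standard but delicate choice, e.g.\ block $k$ having length dominating the sum of all previous blocks). A second subtlety is the passage from $d(T^j x, 0) \ge \gamma$ (a metric statement, hence involving infinitely many seminorms) to a workable seminorm estimate; here one fixes $m$ so that $d(f,0) \ge \gamma$ is controlled by $\|f\|_m$, and all the shadowing estimates are taken in a fixed seminorm on the relevant window, which is exactly why the finite (rather than infinite) shadowing property is the right tool. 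In the Banach case the weaker density bound $\ge \gamma$ in (II') suffices because one can then directly extract, via the residuality argument of \cite{BerBonMulPer13}, an irregular vector and upgrade it — using the linear structure and the density lower bound — to a distributionally irregular one, so the blocks need only contribute a fixed positive proportion rather than density $1$ at each stage.
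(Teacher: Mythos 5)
Your necessity argument and your overall strategy for sufficiency (Baire category over finite-window conditions, with finite shadowing converting concatenated $\delta$-chains into genuine orbits) match the paper's proof in spirit. But there is a concrete gap in the sufficiency direction. A distributionally irregular vector must satisfy $\lim_{n \in J} \|T^n x\|_m = \infty$ along a set $J$ of upper density $1$, whereas your interleaving construction only delivers $\udens\{j : d(T^j x,0) \geq \gamma\} = 1$ for the fixed $\gamma$ supplied by condition (II). Since the metric $d$ is bounded by $1$, and since $d(f,0)\geq\gamma$ only yields a fixed lower bound $\|f\|_m \geq c$ for a suitable seminorm, no amount of ``far from $0$'' information of this kind gives distributional unboundedness: the intersection of your stage-sets is a residual set of vectors that are distributionally bounded away from $0$, not a residual set of distributionally irregular vectors. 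The paper bridges exactly this gap by producing a sequence $y_k \to 0$ with $\tfrac{1}{N_k}\card\{1\le j\le N_k : d(T^j y_k,0)>\eps\}\to 1$ and then invoking \cite[Proposition~8]{BerBonMulPer13}, which upgrades this data (using linearity, essentially by adding large scalar multiples of the $y_k$ to points of a dense set) to the residuality of the set $R_2$ of vectors with $\lim_{n\in J}\|T^nx\|_m=\infty$, $\udens(J)=1$. You mention this residuality argument, but you reserve it for the Banach case; it is in fact indispensable in the Fr\'echet case as well — the only difference in the Banach case is that the weaker density bound $\geq\gamma$ from (II') already suffices as input to that proposition.

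A secondary remark: the interleaving bookkeeping you identify as the ``main obstacle'' is unnecessary. The paper never builds a single pseudotrajectory encoding both behaviors; it treats the ``distributionally null'' vectors ($R_1$, via the open dense sets $A_k$ built from condition (I) and chains padded with zeros) and the ``distributionally unbounded'' vectors ($R_2$, via condition (II) and Proposition~8) as two separate residual sets and simply intersects them. Baire category does the interleaving for you, and each half only requires shadowing a single concatenation of two finite chains.
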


\begin{proof}
Suppose that $T$ is densely distributionally chaotic.
Since $T$ admits a dense set of vectors whose trajectories have subsequences converging to $0$, condition~(I) holds.
Moreover, if $y$ is a distributionally irregular vector for $T$, then there exists $\gamma > 0$ such that
\[
\udens\{j \in \N_0 : d(T^jy,0) \geq \gamma\} = 1.
\]
Given $\delta > 0$, there exists $n \in \N$ such that $d(T^ny,0) < \delta$, and so $(0,T^ny)$ is a $\delta$-chain for $T$
from $0$ to the vector $x\!:= T^ny$ which satisfies $\udens\{j \in \N_0 : d(T^jx,0) \geq \gamma\} = 1$,
proving that condition~(II) also holds.

\smallskip
Conversely, suppose that conditions (I) and (II) hold. For each $k \in \N$, let
\[
A_k\!:= \{x \in X : \exists n \in \N \text{ with}\, \card\{1 \leq j \leq n : d(T^j x,0) < k^{-1}\} \geq n(1 - k^{-1})\}.
\]
It is clear that each $A_k$ is open in $X$. Fix $k \in \N$, $x \in X$ and $\eps > 0$. Let $\eta\!:= \min\{\eps,k^{-1}\}$ and let $\delta > 0$
be associated to $\eta$ according to the fact that $T$ has the finite shadowing property.
By condition~(I), there is a $\delta$-chain $(x_j)_{j=0}^t$ for $T$ from $x$ to $0$.
Choose $n \in \N$ such that $n - t > n(1 - k^{-1})$ and define $x_j\!:= 0$ for all $j \in \{t+1,\ldots,n\}$.
Since $(x_j)_{j=0}^n$ is a $\delta$-chain for $T$, there exists $y \in X$ such that
\[
d(x_j,T^j y) < \eta \ \ \text{ for all } j \in \{0,\ldots,n\}.
\]
Hence, $y \in A_k$ and $d(y,x) < \eps$. This proves that $A_k$ is dense in $X$.
It follows that the set $R_1$ of all $x \in X$ for which there exists $I \subset \N$ with
\[
\udens(I) = 1 \ \ \ \text{ and } \ \ \ \lim_{n \in I} T^n x = 0
\]
is residual in $X$.

Now, let $\eps\!:= \gamma/2$ and, for each $k \in \N$, let $\delta_k > 0$ be associated to $\eps/k$ according to the
finite shadowing property. By condition~(II), for each $k \in \N$, there is a $\delta_k$-chain $(x_{k,j})_{j=0}^{t_k}$
for $T$ from $0$ to a vector $x_k \in X$ satisfying
\[
\udens(I_k) = 1, \ \text{ where } I_k\!:= \{j \in \N_0 : d(T^jx_k,0) \geq \gamma\}.
\]
For each $k \in \N$, consider a natural number $N_k > t_k$ and define
\[
x_{k,j}\!:= T^{j-t_k}x_k \ \ \text{ for all } j \in \{t_k+1,\ldots,N_k\}.
\]
Since $(x_{k,j})_{j=0}^{N_k}$ is a $\delta_k$-chain for $T$, there exists $y_k \in X$ such that
\[
d(x_{k,j},T^j y_k) < \eps/k \ \ \text{ for all } j \in \{0,\ldots,N_k\}.
\]
If $j \in \{t_k+1,\ldots,N_k\}$ and $j - t_k \in I_k$, then
\[
d(T^jy_k,0) \geq d(x_{k,j},0) - d(x_{k,j},T^jy_k) > \gamma - \eps/k \geq \eps.
\]
Since $\udens(I_k) = 1$, we can choose $N_k$ as large as we want so that
\begin{equation}\label{EqDC2}
\card\{1 \leq j \leq N_k : d(T^jy_k,0) > \eps\} > N_k(1 - k^{-1}).
\end{equation}
Hence, we can choose $N_1 < N_2 < N_3 < \cdots$ so that (\ref{EqDC2}) holds for all $k \in \N$. Thus,
\[
\lim_{k \to \infty} y_k = 0 \ \ \text{ and } \ \ \lim_{k \to \infty} \frac{1}{N_k} \card\{1 \leq j \leq N_k : d(T^j y_k,0) > \eps\} = 1.
\]
By \cite[Proposition~8]{BerBonMulPer13}, the set $R_2$ of all $x \in X$ for which there are $m \in \N$ and $J \subset \N$ with
\[
\udens(J) = 1 \ \ \ \text{ and } \ \ \ \lim_{n \in J} \|T^n x\|_m = \infty
\]
is residual in $X$.
Therefore, the set $R_1 \cap R_2$ is also residual in $X$.
Since each element of this set is a distributionally irregular vector for $T$, we conclude that $T$ is densely distributionally chaotic.

\smallskip
Let us now assume that $X$ is a Banach space and that conditions (I) and (II') hold.
Let $R_1$ and $R_2$ be as above.
Since condition~(I) was not changed, $R_1$ is residual in $X$.
By following the arguments used in the previous paragraph with $\udens(I_k) \geq \gamma$ instead of $\udens(I_k) = 1$,
we see that we can choose $N_k$ as large as we want so that
\begin{equation}\label{EqDC3}
\card\{1 \leq j \leq N_k : d(T^jy_k,0) > \eps\} > \eps N_k.
\end{equation}
Hence, we can choose $N_1 < N_2 < N_3 < \cdots$ so that (\ref{EqDC3}) holds for all $k \in \N$.
By \cite[Proposition~8]{BerBonMulPer13} in the case of Banach spaces, we conclude that $R_2$ is residual in $X$.
\end{proof}

\begin{remark}
If $T$ is a proper contraction (respectively, a proper dilation) on a Banach space $X$, then $T$ has the shadowing property
and condition (I) (respectively, condition (II)) holds, but $T$ is not distributionally chaotic.
This shows that each one of the conditions in Theorem~\ref{DistChaos} is essential for its validity.
\end{remark}

As applications of the previous theorem, we obtain the following results.

\begin{theorem}\label{DCDDC}
If a Devaney chaotic continuous linear operator $T$ on a Fr\'echet space $X$ has the finite shadowing property,
then it is densely distributionally chaotic.
\end{theorem}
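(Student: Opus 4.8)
The plan is to deduce this from Theorem~\ref{DistChaos} by verifying its two hypotheses for a Devaney chaotic operator $T$ with the finite shadowing property. First, since $T$ is Devaney chaotic it is in particular transitive, hence hypercyclic (the Fr\'echet space $X$ is separable because it carries a dense orbit, so Birkhoff's theorem applies), and every hypercyclic operator satisfies: the set of vectors whose orbit has a subsequence tending to $0$ is dense. More directly, periodic points of $T$ other than $0$ give chains to $0$ trivially, and the dense set of periodic points is actually contained in $I_0(T)$ once we note that for a periodic point $x$ with $T^N x = x$ we do not immediately get $x\to 0$; instead I would argue via transitivity that for any $x$ and any $\delta>0$ there is a point $x'$ near $x$ and some $n$ with $T^nx'$ near $0$, so concatenating $(x,x',Tx',\ldots,T^nx',0)$ yields a $\delta$-chain from $x$ to $0$. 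Thus $I_0(T)=X$ and in particular condition (I) holds.

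For condition~(II), I would exploit that Devaney chaos forces $T$ not to be ``small'': since $T$ has a dense set of periodic points and $X$ is infinite-dimensional (a Devaney chaotic operator on a finite-dimensional space does not exist — periodic points would have to be dense among eigenvectors of roots of unity, but one checks chaos rules this out), there exist periodic points $x_0$ arbitrarily close to any prescribed nonzero vector, hence there is a periodic point $x_0\neq 0$. Fix such an $x_0$ with $T^Nx_0=x_0$, and set $\gamma\!:= \tfrac12\min_{0\le j<N} d(T^jx_0,0)>0$. Then the periodic orbit of $x_0$ stays at distance $\geq\gamma$ from $0$ at \emph{every} time, so $\udens\{j\in\N_0 : d(T^jx_0,0)\geq\gamma\}=1$. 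Given $\delta>0$, transitivity gives a point $z$ near $0$ and an integer $m$ with $T^m z$ close to $x_0$; refining, one produces a $\delta$-chain $(0,z,Tz,\ldots,T^{m-1}z,x_0)$ for $T$ from $0$ to $x_0$. This is exactly condition~(II) with that $\gamma$.

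Having verified (I) and (II), Theorem~\ref{DistChaos} immediately yields that $T$ is densely distributionally chaotic, completing the proof.

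The main obstacle I anticipate is the step producing genuine $\delta$-chains from $0$ to the periodic point $x_0$ (and from an arbitrary $x$ to $0$): transitivity only gives one point whose orbit \emph{passes near} the target, and one must be careful that the concatenation of the initial segment, the near-miss correction, and the final jump is actually a $\delta$-pseudotrajectory. This is precisely the content of Proposition~\ref{CR-CT-CM} and Lemma~\ref{I0O0} in the excerpt — namely that chain recurrence, chain transitivity and the density of $I_0(T)$ and $O_0(T)$ are all equivalent for linear operators — so the cleanest route is to invoke those results: $T$ transitive $\Rightarrow$ $T$ chain transitive $\Rightarrow$ $I_0(T)$ and $O_0(T)$ dense, which gives (I) directly and supplies the $0$-to-$x_0$ chains needed for (II). The remaining genuinely new ingredient is only the choice of the periodic point $x_0$ and the observation that its entire orbit is bounded away from $0$, which is elementary.
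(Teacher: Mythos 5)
Your proposal is correct and follows exactly the route the paper intends: its proof of Theorem~\ref{DCDDC} simply states that Devaney chaos implies conditions (I) and (II) of Theorem~\ref{DistChaos}, and you supply the verification — density of periodic points gives $CR(T)=X$, hence chain transitivity via Proposition~\ref{CR-CT-CM}, which yields both the $\delta$-chains to $0$ for (I) and the $\delta$-chains from $0$ to a fixed nonzero periodic point $x_0$ whose orbit is bounded away from $0$ for (II). The aside about finite-dimensional spaces is unnecessary (a nonzero periodic point exists simply because periodic points are dense), but nothing in the argument is missing.
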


\begin{proof}
Actually, it is not difficult to check that Devaney chaos implies conditions (I) and (II) of the previous theorem.
\end{proof}

\begin{theorem}\label{CRDDC}
If a chain recurrent continuous linear operator $T$ on a Banach space $X$ has the finite shadowing property,
then it is densely distributionally chaotic.
\end{theorem}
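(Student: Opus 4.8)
The plan is to deduce this from Theorem~\ref{DistChaos}, applied with the weaker condition (II') available in the Banach space setting. So I need to verify that a chain recurrent operator $T$ with the finite shadowing property on a Banach space $X$ satisfies conditions (I) and (II').

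First, condition (I). Since $T$ is chain recurrent, Lemma~\ref{I0O0} together with the remark following it (chain recurrence is equivalent to $I_0(T)$ and $O_0(T)$ both being dense) immediately gives that $I_0(T)$ is dense in $X$. Alternatively one can argue directly: for each $x$ and each $\delta$, chain recurrence gives a $\delta$-chain from $x$ to $x$; iterating and using that $0$ lies in the closure of every orbit's "chain-reachable" set for a linear operator (as recorded in the Appendix results on $I_0(T)$ being a closed subspace) yields the density. So condition (I) is the easy half.

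The substantive step is condition (II'): producing $\gamma > 0$ such that, for every $\delta > 0$, there is a $\delta$-chain from $0$ to some $x$ with $\udens\{j : d(T^j x, 0) \geq \gamma\} \geq \gamma$. Here is where chain recurrence must be exploited quantitatively. If $I_0(T) = O_0(T) = X$, pick any nonzero $w \in X$ and a $\delta$-chain from $0$ to $w$; the point is to show the forward orbit of $w$ cannot spend almost all of its time near $0$. Suppose, toward a contradiction, that for the relevant $w$ we have $\udens\{j : \|T^j w\| \geq \gamma\}$ small for every small $\gamma$; combined with the finite shadowing property this would force, via an argument like the one in the proof of Theorem~\ref{DistChaos} (building a residual set $R_1$ of vectors whose orbits converge to $0$ along a density-one set), that $T$ behaves too much like a contraction to be chain recurrent — but a genuinely chain recurrent linear operator that also has shadowing is essentially forced to have the "dilation-type" instability witnessed by condition (II). Concretely, I would use that $O_0(T)$ dense means $0$ can be chain-connected forward to an arbitrary vector, and chain recurrence of that vector (a $\delta$-chain back to itself) lets one splice long excursions; the $W$-chain splicing trick from the proof of Theorem~\ref{Equi1} is the model. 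Choosing the excursion to pass near a fixed nonzero point a positive proportion of the time, then shadowing the spliced chain, yields a genuine orbit (hence a vector $x$ reachable by a $\delta$-chain from $0$) that stays $\gamma$-away from $0$ with positive upper density.

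The main obstacle I anticipate is precisely the quantitative lower bound in (II'): producing a \emph{uniform} $\gamma$ that works for all $\delta$ simultaneously, rather than a $\gamma$ depending on $\delta$. The resolution should come from homogeneity of the linear structure — one can normalize the excursion vector and rescale, so that the proportion of time spent away from $0$, being scale-invariant, gives a $\delta$-independent constant; the finite shadowing tolerance $\delta$ only affects how closely the spliced chain is shadowed, not the density estimate. Once (I) and (II') are in hand, Theorem~\ref{DistChaos} (Banach case) delivers dense distributional chaos directly.
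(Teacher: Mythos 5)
Your overall strategy is the paper's: verify conditions (I) and (II') of Theorem~\ref{DistChaos} in the Banach case, with (I) following from chain transitivity. But there are two genuine gaps in your treatment of (II'). First, (II') is a statement about the \emph{upper density} of $\{j\in\N_0 : d(T^jx,0)\geq\gamma\}$, i.e.\ about the entire infinite forward orbit of $x$; shadowing a finite spliced chain only controls finitely many iterates of the shadowing point and says nothing about where its orbit goes afterwards (it could converge to $0$, making the set finite and the upper density zero). The missing ingredient is Theorem~\ref{EquivSh}: on a Banach space, finite shadowing upgrades to \emph{positive} shadowing, so the periodic infinite pseudotrajectory $(y_0,y_1,\ldots,y_k,y_1,\ldots,y_k,\ldots)$ obtained by repeating an $\eta$-chain from a vector $y$ with $\|y\|\geq 2$ to itself is $1$-shadowed by a genuine orbit $(T^jx)_{j\in\N_0}$, giving $\|T^{nk}x\|\geq 1$ for all $n$ and hence $\udens\{j:\|T^jx\|\geq 1\}\geq 1/k$.

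Second, your proposed resolution of the uniformity-in-$\delta$ issue by rescaling does not work: replacing $x$ by $\lambda x$ turns $\{j:\|T^j(\lambda x)\|\geq\gamma\}$ into $\{j:\|T^jx\|\geq\gamma/\lambda\}$, so the density at the \emph{fixed} threshold $\gamma$ is not scale-invariant and degrades as $\lambda\to 0$. The correct mechanism, and the one the paper uses, is that the witness $x$ is constructed \emph{once}, from the fixed shadowing parameter $\eta$ associated to $\eps:=1$, and then chain transitivity (Proposition~\ref{CR-CT-CM}) supplies, for \emph{every} $\delta>0$, a $\delta$-chain from $0$ to this same $x$; the parameter $\eta$ of the shadowing step and the parameter $\delta$ of condition (II') are decoupled, so no rescaling is needed and $\gamma:=1/k$ works uniformly. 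Your intermediate contradiction argument (``$T$ behaves too much like a contraction to be chain recurrent'') is also not a proof and is not needed once the direct construction above is in place. With these two repairs your argument becomes the paper's proof.
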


\begin{proof}
Since $T$ is chain transitive (Proposition~\ref{CR-CT-CM}), condition (I) holds.
By Theorem~\ref{EquivSh}, $T$ has the positive shadowing property.
Let $\eta > 0$ be associated to $\eps\!:= 1$ according to this property.
Choose a vector $y \in X$ with $\|y\| \geq 2$ and let $(y_j)_{j=0}^k$ be an $\eta$-chain for $T$ from $y$ to itself.
Since
\[
(y_0,y_1,\ldots,y_k,y_1,\ldots,y_k,y_1,\ldots,y_k,\ldots)
\]
is an $\eta$-pseudotrajectory of $T$, there exists $x \in X$ such that
\[
\|y - T^{nk}x\| < 1 \ \ \text{ for all } n \in \N_0.
\]
Thus,
\[
\udens\{j \in \N_0 : \|T^jx\| \geq 1\} \geq 1/k.
\]
Since $T$ is chain transitive, we conclude that condition (II') holds with $\gamma\!:= 1/k$.
\end{proof}


\section{Chain recurrent weighted shifts on Fr\'echet sequence spaces}\label{WeightedShifts}

Our goal in this section is to characterize the notion of chain recurrence for weighted shifts on Fr\'echet sequence spaces.

\smallskip
Recall that a {\em Fr\'echet sequence space} is a Fr\'echet space $X$ which is a vector subspace of the product space $\K^\N$
such that the inclusion map $X \to \K^\N$ is continuous, that is, convergence in $X$ implies coordinatewise convergence.
Given a sequence $w\!:= (w_n)_{n \in \N}$ of nonzero scalars, it follows from the closed graph theorem that
the {\em unilateral weighted backward shift}
\[
B_w(x_1,x_2,x_3,\ldots)\!:= (w_2x_2,w_3x_3,w_4x_4,\ldots)
\]
is a continuous linear operator on $X$ as soon as it maps $X$ into itself.
The {\em canonical vectors} of $\K^\N$ are the vectors $e_n$, $n \in \N$, where the $n^\text{th}$ coordinate of $e_n$ is $1$
and the other coordinates of $e_n$ are $0$. The sequence $(e_n)_{n \in \N}$ is a {\em basis} of $X$ if each $e_n$ belongs to $X$ and
\[
x = \sum_{n=1}^\infty x_ne_n \ \ \text{ for all } x\!:= (x_n)_{n \in \N} \in X.
\]

We will also consider Fr\'echet sequence spaces consisting of bilateral sequences.
A {\em Fr\'echet sequence space over $\Z$} is a Fr\'echet space $X$ which is a vector subspace of the product space $\K^\Z$
such that the inclusion map $X \to \K^\Z$ is continuous. As before, if $w\!:= (w_n)_{n \in \Z}$ is a sequence of nonzero scalars,
then the {\em bilateral weighted backward shift}
\[
B_w((x_n)_{n \in \Z})\!:= (w_{n+1}x_{n+1})_{n \in \Z}
\]
is a continuous linear operator on $X$ as soon as it maps $X$ into itself.
By abuse of language, we also denote the {\em canonical vectors} of $\K^\Z$ by $e_n$, $n \in \Z$.
The sequence $(e_n)_{n \in \Z}$ is a {\em basis} of $X$ if each $e_n$ belongs to $X$ and
\[
x = \sum_{n=-\infty}^\infty x_ne_n \ \ \text{ for all } x\!:= (x_n)_{n \in \Z} \in X.
\]

In the results below, we are adopting the convention that $c/0 = \infty$ whenever $c \in (0,\infty)$.

\smallskip
We begin by characterizing chain recurrence for bilateral (unweighted) backward shifts.

\begin{theorem}\label{BBSCRThm}
Suppose that $X$ is a Fr\'echet sequence space over $\Z$ in which the sequence $(e_n)_{n \in \Z}$ of canonical vectors is a basis,
$(\|\cdot\|_k)_{k \in \N}$ is an increasing sequence of seminorms that induces the topology of $X$,
and the bilateral backward shift
\[
B : (x_n)_{n \in \Z} \in X \mapsto (x_{n+1})_{n \in \Z} \in X
\]
is a well-defined operator. Then $B$ is chain recurrent if and only if
\begin{equation}\label{BBSCR1}
\sum_{n=1}^\infty \frac{1}{\|e_{-n}\|_k} = \sum_{n=1}^\infty \frac{1}{\|e_n\|_k} = \infty \ \ \text{ for all } k \in \N.
\end{equation}
\end{theorem}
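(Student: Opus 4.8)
The plan is to characterize chain recurrence of $B$ via the subspaces $I_0(B)$ and $O_0(B)$ introduced before Lemma~\ref{I0O0}. Since chain recurrence for a linear operator is equivalent to chain transitivity (Proposition~\ref{CR-CT-CM}), and since $B$ is chain recurrent precisely when both $I_0(B)$ and $O_0(B)$ are dense in $X$ (the remark following Lemma~\ref{I0O0}), it suffices to show that, under the hypothesis that $(e_n)_{n\in\Z}$ is a basis, condition \eqref{BBSCR1} is equivalent to $I_0(B) = O_0(B) = X$. Because the finitely supported vectors are dense, it is enough to decide when each basis vector $e_m$ lies in $I_0(B)$ and in $O_0(B)$; and since $B$ is a weighted shift (here unweighted), a translation argument reduces everything to whether $e_0 \in I_0(B)$ and $e_0 \in O_0(B)$.

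First I would treat $e_0 \in O_0(B)$, i.e.\ constructing short $\delta$-chains from $0$ to $e_0$. The natural attempt is the telescoping chain $0, e_1/N, e_1/N + e_2/N + \dots$, more precisely vectors of the form $x_j = \frac{1}{N}\sum_{i=j+1}^{N} e_i$ (suitably indexed) so that $Bx_j - x_{j+1}$ equals a single rescaled canonical vector; one pushes mass in from the right, one coordinate at a time, in steps of size $\|e_i\|_k / N$. Summing the errors in the $\|\cdot\|_k$ seminorm gives a bound of order $\frac{1}{N}\sum_{i=1}^{N}\|e_i\|_k$, which is NOT what we want — so instead I would push in $e_0$ from the right using the \emph{harmonic-type weights} dictated by \eqref{BBSCR1}: write $e_0$ as a limit of $B^n$ applied to vectors supported on coordinates $1,\dots,n$ with coefficients $1/\|e_i\|_k$-normalized, exploiting that $\sum 1/\|e_n\|_k = \infty$ to make each incremental step small while the total displacement still reaches $e_0$. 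Symmetrically, $e_0 \in I_0(B)$ (chains from $e_0$ to $0$) is handled by pushing the mass of $e_0$ off to the left along coordinates $-1,-2,\dots$, where the step sizes are controlled by $1/\|e_{-n}\|_k$ and the divergence of $\sum 1/\|e_{-n}\|_k$ is what allows the chain to dissipate $e_0$ down to $0$ with small individual steps. Since both series must diverge for \emph{every} $k$, and the seminorms are increasing, a diagonal choice of the chain length handles all $k$ simultaneously (only finitely many $k$ matter up to a prescribed metric error, because of the $2^{-k}$ weights in \eqref{Metric}).

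For the converse, suppose \eqref{BBSCR1} fails, say $\sum_{n=1}^\infty 1/\|e_n\|_k < \infty$ for some $k$ (the case with $e_{-n}$ is symmetric). I would show $O_0(B)$ is not dense — indeed it misses $e_0$ — by producing a continuous linear functional, or more hands-on a seminorm estimate, that every $\delta$-chain from $0$ must respect. The key quantitative observation is that if $(x_j)_{j=0}^{\ell}$ is a $\delta$-chain from $0$ to $y$, then $y = \sum_{j=0}^{\ell-1} B^{\ell-1-j}(x_{j+1} - Bx_j)$, a sum of $B$-iterates of vectors each of $\|\cdot\|_k$-norm $\le \delta$; applying a functional that is bounded on $B$-iterates — which exists precisely because the backward shift is "contracting in a summable way" on the right half when $\sum 1/\|e_n\|_k<\infty$, so that the images $B^m v$ of a bounded vector $v$ have controlled pairing with a fixed weighted functional — shows the reachable $y$ from $0$ form a bounded, hence non-dense, set, and in particular cannot approximate $e_0$. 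Concretely, one tests against the functional $\varphi(x) = \sum_n x_n / \|e_n\|_k$ (convergent on finitely supported vectors, extending continuously by the summability assumption) and checks $\varphi \circ B^m$ is uniformly small on the unit ball of $\|\cdot\|_k$, whereas $\varphi(e_0) = 1$; this forces $e_0 \notin \overline{O_0(B)}$.

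The main obstacle I anticipate is the converse direction: making rigorous the claim that failure of a divergence condition yields a genuine obstruction to building chains. The forward direction is essentially an explicit (if slightly fiddly) construction of telescoping chains with harmonically-tuned coefficients, and the reduction via $I_0, O_0$ and the basis hypothesis is routine. But for the converse one must correctly identify \emph{which} side's series controls chains \emph{into} $0$ versus chains \emph{out of} $0$, set up the right dual object, and verify the uniform smallness of $\varphi\circ B^m$ on a seminorm ball — this is where the interplay between the shift direction, the indexing of the weights in \eqref{BBSCR1}, and the structure of $\delta$-chains has to be pinned down exactly, and a sign or index error there would invert the whole statement. A secondary technical point is handling all seminorms $k$ at once in the constructions, but the increasing nature of $(\|\cdot\|_k)_k$ together with the $2^{-k}$ truncation in the metric \eqref{Metric} makes this a standard diagonalization.
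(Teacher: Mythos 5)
Your forward direction is essentially the paper's argument: reduce via Lemma~\ref{I0O0} and Proposition~\ref{CR-CT-CM} to the density of the closed $B$-invariant subspaces $I_0(B)$ and $O_0(B)$, and build the chains by telescoping with coefficients proportional to $1/\|e_n\|_k$, normalized by a partial sum $t>2/\delta$ so that each step contributes a single canonical vector of seminorm $1/t$. One real flaw, though: the claimed reduction to ``$e_0\in I_0(B)$ and $e_0\in O_0(B)$'' does not work for $O_0(B)$. Since $Be_m=e_{m-1}$, forward invariance of $O_0(B)$ propagates membership only to smaller indices, so $e_0\in O_0(B)$ yields $e_n\in O_0(B)$ for $n\le 0$ but says nothing about $e_1,e_2,\dots$, which span a proper closed subspace in general; and there is no continuous ``translation'' conjugacy on $X$, because the seminorms $\|e_n\|_k$ may behave completely differently for different $n$. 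You must run the construction separately for each $e_i$ with $i\ge 1$ (the paper's Claim~1), using that the tails $\sum_{n>i}1/\|e_n\|_k$ still diverge. (For $I_0(B)$ the reduction is fine, since prepending a trajectory shows $x\in I_0(B)$ whenever $B^mx\in I_0(B)$.) The degenerate case $\|e_n\|_k=0$, handled in the paper via the convention $c/0=\infty$, also needs a word.

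Your converse is a genuinely different route from the paper's, and it is the place where the plan could silently fail. The paper telescopes a $\delta$-chain from $re_1$ (resp.\ $re_0$) to itself as in \eqref{BBSCR7} and reads off a \emph{single} coordinate, so that step $j$ contributes only the entry $y_{j,m-j+1}$, bounded by $1/\|e_{m-j+1}\|_k$; the total is at most $\sum_{n=1}^{\infty}1/\|e_n\|_k$ uniformly in the chain length, and letting $r\to\infty$ forces divergence. Your functional $\varphi(x)=\sum_{n\ge0}x_n/\|e_n\|_k$ instead sums over \emph{all} coordinates of every error term, so what you need is not that $\varphi\circ B^m$ is ``uniformly small'' on the relevant ball but that these quantities are \emph{summable in $m$}: a bound merely uniform in $m$ gives $|\varphi(y)|\lesssim\ell$ for a chain of length $\ell$, which proves nothing. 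The summability does hold, because $\sum_{m\ge0}\sum_{n\ge0}\frac{1}{\|e_{n+m}\|_k\,\|e_n\|_k}\le\bigl(\sum_{n\ge0}1/\|e_n\|_k\bigr)^2<\infty$, so your argument can be completed, but this is precisely the estimate you left unverified and it is where a naive reading fails. Also, $O_0(B)$ is not a bounded set (it is a closed subspace, typically containing all $e_n$ with $n\le0$); what you actually get is that $\varphi$ is bounded on the reachable set while $\varphi(re_0)\to\infty$, which suffices for non-density. Finally, the $I_0$ side is not literally symmetric in this dual formulation: there the obstruction concerns chains from $re_0$ \emph{into} $0$, and evaluating $0=B^m(re_0)+\sum_jB^{m-j}y_j$ at the single coordinate $-m$, as the paper does, is both simpler and sharper than a functional estimate on a half-line.
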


\begin{proof}
Consider $X$ endowed with its canonical metric given by (\ref{Metric}).

Suppose that (\ref{BBSCR1}) holds. In view of Lemma~\ref{I0O0}, the chain recurrence of $B$ follows from the two claims below.

\medskip\noindent
{\bf Claim 1.} $e_i \in O_0(B)$ for all $i \in \N$.

\medskip
Indeed, fix $i \in \N$ and $\delta > 0$. Choose $\ell \in \N$ such that
\begin{equation}\label{BBSCR2}
d(x,0) < \delta \ \ \text{ whenever } x \in X \text{ and } \|x\|_\ell < \delta/2.
\end{equation}
By hypothesis,
\begin{equation}\label{BBSCR3}
\sum_{n=1}^\infty \frac{1}{\|e_n\|_k} = \infty \ \ \text{ for all } k \in \N.
\end{equation}

Suppose that there exists $n_k \in \N$ with $\|e_{n_k}\|_k = 0$, for each $k \in \N$.
Choose $k \geq \ell$ such that $n_k > i$. By (\ref{BBSCR2}),
\[
0,e_{n_k},e_{n_k-1},\ldots,e_i
\]
is a $\delta$-chain for $B$ from $0$ to $e_i$, proving that $e_i \in O_0(B)$.

Now, suppose that there exists $k_0 \in \N$ such that $\|e_n\|_{k_0} \neq 0$ for all $n \in \N$.
Choose $k \in \N$ with $k \geq \max\{k_0,\ell\}$. By (\ref{BBSCR3}), there exists $m \in \N$ such that
\[
t\!:= \sum_{n=i+1}^{i+m} \frac{1}{\|e_n\|_k} > \frac{2}{\delta}\,\cdot
\]
Define
\[
x_1\!:= \frac{e_{i+m}}{t \|e_{i+m}\|_k} \ \ \ \text{ and } \ \ \
x_{j+1}\!:= Bx_j + \frac{e_{i+m-j}}{t \|e_{i+m-j}\|_k} \ \text{ for } 1 \leq j < m.
\]
Note that $x_m = e_{i+1}$. Hence, it follows from (\ref{BBSCR2}) that
\[
0,x_1,x_2,\ldots,x_m,e_i
\]
is a $\delta$-chain for $B$ from $0$ to $e_i$, proving that $e_i \in O_0(B)$.

\medskip\noindent
{\bf Claim 2.} $e_{-i} \in I_0(B)$ for all $i \in \N$.

\medskip
Indeed, let $i$, $\delta$ and $\ell$ be as in the proof of Claim~1. By hypothesis,
\begin{equation}\label{BBSCR4}
\sum_{n=1}^\infty \frac{1}{\|e_{-n}\|_k} = \infty \ \ \text{ for all } k \in \N.
\end{equation}

If there exists $n_k \in \N$ with $\|e_{-n_k}\|_k = 0$, for each $k \in \N$,
then we choose $k \geq \ell$ with $n_k > i$ and apply (\ref{BBSCR2}) to conclude that
\[
e_{-i},e_{-i-1},\ldots,e_{-n_k+1},0
\]
is a $\delta$-chain for $B$ from $e_{-i}$ to $0$, proving that $e_{-i} \in I_0(B)$.

If there exists $k_0 \in \N$ such that $\|e_{-n}\|_{k_0} \neq 0$ for all $n \in \N$,
then we choose $k \in \N$ with $k \geq \max\{k_0,\ell\}$. By (\ref{BBSCR4}), there exists $m \in \N$ such that
\[
t\!:= \sum_{n=-i-m}^{-i-1} \frac{1}{\|e_n\|_k} > \frac{2}{\delta}\,\cdot
\]
Define
\[
x_0\!:= e_{-i} \ \ \ \text{ and } \ \ \
x_j\!:= Bx_{j-1} - \frac{e_{-i-j}}{t \|e_{-i-j}\|_k} \ \text{ for } 1 \leq j \leq m.
\]
Note that $x_m = 0$. Hence, it follows from (\ref{BBSCR2}) that
\[
x_0,x_1,x_2,\ldots,x_m
\]
is a $\delta$-chain for $B$ from $e_{-i}$ to $0$, proving that $e_{-i} \in I_0(B)$.

\medskip
Conversely, suppose that $B$ is chain recurrent. Let us first prove that (\ref{BBSCR3}) holds. For this purpose, fix $k \in \N$.
We may assume that $\|e_n\|_k \neq 0$ for all $n \in \N$, for otherwise the desired equality would hold trivially.
By the Banach-Steinhaus theorem, there exists $\delta > 0$ such that
\begin{equation}\label{BBSCR5}
x\!:= (x_n)_{n \in \Z} \in X \text{ and } d(x,0) < \delta \ \ \Longrightarrow \ \ \|x_ne_n\|_k < 1 \text{ for all } n \in \Z.
\end{equation}
Fix $r > 0$. By hypothesis, there is a $\delta$-chain $(z_j)_{j=0}^m$ for $B$ from $r e_1$ to itself.
For each $j \in \{1,\ldots,m\}$, let $y_j\!:= z_j - Bz_{j-1}$ and write $y_j = (y_{j,n})_{n \in \Z}$. By (\ref{BBSCR5}),
\begin{equation}\label{BBSCR6}
|y_{j,m-j+1}| < \frac{1}{\|e_{m-j+1}\|_k} \ \ \text{ for all } j \in \{1,\ldots,m\}.
\end{equation}
Since
\begin{equation}\label{BBSCR7}
z_m = B^m z_0 + B^{m-1} y_1 + B^{m-2} y_2 + \cdots + B y_{m-1} + y_m,
\end{equation}
we obtain
\begin{align*}
r &= y_{1,m} + y_{2,m-1} + \cdots + y_{m-1,2} + y_{m,1}\\
  &\leq |y_{1,m}| + |y_{2,m-1}| + \cdots + |y_{m-1,2}| + |y_{m,1}|\\
  &< \frac{1}{\|e_m\|_k} + \frac{1}{\|e_{m-1}\|_k} + \cdots + \frac{1}{\|e_2\|_k} + \frac{1}{\|e_1\|_k}\,,
\end{align*}
where in the last inequality we used (\ref{BBSCR6}). Since $r > 0$ is arbitrary, we are done.

\smallskip
Let us now establish (\ref{BBSCR4}).
Assume $k \in \N$, $\|e_{-n}\|_k \neq 0$ for all $n \in \N$, $\delta > 0$ such that (\ref{BBSCR5}) holds, and $r > 0$.
Let $(z_j)_{j=0}^m$ be a $\delta$-chain for $B$ from $r e_0$ to itself.
For each $j \in \{1,\ldots,m\}$, let $y_j\!:= z_j - Bz_{j-1}$ and write $y_j = (y_{j,n})_{n \in \Z}$. By (\ref{BBSCR5}),
\begin{equation}\label{BBSCR8}
|y_{j,-j}| < \frac{1}{\|e_{-j}\|_k} \ \ \text{ for all } j \in \{1,\ldots,m\}.
\end{equation}
Since (\ref{BBSCR7}) holds, we obtain $0 = r + y_{1,-1} + y_{2,-2} + \cdots + y_{m,-m}$. Thus, by (\ref{BBSCR8}),
\[
r \leq |y_{1,-1}| + |y_{2,-2}| + \cdots + |y_{m,-m}|
  < \frac{1}{\|e_{-1}\|_k} + \frac{1}{\|e_{-2}\|_k} + \cdots + \frac{1}{\|e_{-m}\|_k}\,\cdot
\]
Since $r > 0$ is arbitrary, the proof is complete.
\end{proof}

The previous theorem can be generalized to bilateral weighted backward shifts as follows.

\begin{theorem}\label{BWBSCRThm}
Suppose that $X$ is a Fr\'echet sequence space over $\Z$ in which the sequence $(e_n)_{n \in \Z}$ of canonical vectors is a basis,
$(\|\cdot\|_k)_{k \in \N}$ is an increasing sequence of seminorms that induces the topology of $X$,
$w\!:= (w_n)_{n \in \Z}$ is a sequence of nonzero scalars, and the bilateral weighted backward shift
\[
B_w : (x_n)_{n \in \Z} \in X \mapsto (w_{n+1}x_{n+1})_{n \in \Z} \in X
\]
is a well-defined operator. Then $B_w$ is chain recurrent if and only if
\begin{equation}\label{BWBSCR1}
\sum_{n=1}^\infty \frac{1}{|w_{-n+1} \cdots w_0| \|e_{-n}\|_k}
  = \sum_{n=1}^\infty \frac{|w_1 \cdots w_n|}{\|e_n\|_k} = \infty \ \ \text{ for all } k \in \N.
\end{equation}
\end{theorem}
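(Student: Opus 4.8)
The plan is to reduce the statement to the unweighted case, Theorem~\ref{BBSCRThm}, by a diagonal change of variables. Define scalars $(d_n)_{n\in\Z}$ by $d_0:=1$, $d_n:=(w_1\cdots w_n)^{-1}$ for $n\ge 1$, and $d_{-n}:=w_{-n+1}\cdots w_0$ for $n\ge 1$. These are exactly the numbers satisfying $d_{n+1}=d_n/w_{n+1}$ for every $n\in\Z$, which is precisely the relation making the diagonal map $D$ on $\K^\Z$ given by $D((x_n)_n):=(d_nx_n)_n$ intertwine the shifts: $B_wD=DB$, where $B$ denotes the unweighted bilateral backward shift. Thus $B_w=DBD^{-1}$ at the level of $\K^\Z$.

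Next I would transport the underlying space. Let $Y:=\{(y_n)_{n\in\Z}\in\K^\Z:(d_ny_n)_{n\in\Z}\in X\}$, endowed with the seminorms $\|y\|'_k:=\|(d_ny_n)_n\|_k$. Then $D$ restricts to a linear bijection $Y\to X$ which is isometric for every $k$ (one has $\|Dy\|_k=\|y\|'_k$ and $\|D^{-1}x\|'_k=\|x\|_k$), so $Y$ is complete, $(\|\cdot\|'_k)_k$ is an increasing sequence of seminorms inducing its topology, and the inclusion $Y\hookrightarrow\K^\Z$ is continuous because coordinatewise convergence in $Y$ follows from that in $X$ together with $d_n\ne 0$; hence $Y$ is a Fr\'echet sequence space over $\Z$. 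Since $d_ne_n\in X$ and $e_n=D^{-1}(d_ne_n)$, we get $e_n\in Y$, and transporting the expansion $x=\sum_n x_ne_n$ through $D$ shows that $(e_n)_{n\in\Z}$ is a basis of $Y$. Finally, because $B_w$ maps $X$ into $X$, the conjugate $B=D^{-1}B_wD$ maps $Y$ into $Y$ and is the continuous unweighted bilateral backward shift on $Y$, and $\|e_n\|'_k=|d_n|\,\|e_n\|_k$ for all $n$ and $k$.

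The conclusion is then immediate. As $D\colon Y\to X$ is a topological isomorphism conjugating $B$ to $B_w$, and chain recurrence depends only on the underlying uniform structure, $B_w$ is chain recurrent if and only if $B$ is. Applying Theorem~\ref{BBSCRThm} to $Y$ with the seminorms $(\|\cdot\|'_k)_k$, the operator $B$ is chain recurrent if and only if $\sum_{n=1}^\infty\frac{1}{\|e_{-n}\|'_k}=\sum_{n=1}^\infty\frac{1}{\|e_n\|'_k}=\infty$ for all $k\in\N$. Substituting $\|e_{-n}\|'_k=|w_{-n+1}\cdots w_0|\,\|e_{-n}\|_k$ and $\|e_n\|'_k=|w_1\cdots w_n|^{-1}\|e_n\|_k$ (with the convention $c/0=\infty$) turns these two series into precisely those appearing in~(\ref{BWBSCR1}).

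I do not anticipate a real obstacle: the only point requiring care is checking that $(Y,(\|\cdot\|'_k)_k,B)$ genuinely satisfies all the hypotheses of Theorem~\ref{BBSCRThm} --- completeness, the seminorms being increasing and inducing a Fr\'echet sequence space topology, and the canonical vectors forming a basis --- but each of these is inherited from the corresponding property of $X$ through the isometric isomorphism $D$. If one prefers a self-contained argument, the proof of Theorem~\ref{BBSCRThm} can instead be repeated almost verbatim, using the telescoping identity $z_m=B_w^mz_0+\sum_{j=1}^mB_w^{m-j}y_j$ (with $y_j:=z_j-B_wz_{j-1}$) in place of~(\ref{BBSCR7}); the estimated coordinate of $B_w^{m-j}y_j$ then carries a product of weights, which reproduces the weighted denominators of~(\ref{BWBSCR1}).
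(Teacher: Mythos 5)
Your proposal is correct and follows essentially the same route as the paper: the paper also defines the weights $v_0:=1$, $v_{-n}:=w_{-n+1}\cdots w_0$, $v_n:=(w_1\cdots w_n)^{-1}$, transfers the topology of $X$ to the weighted space $X_v$ via the diagonal isomorphism $\phi_v$, and invokes Theorem~\ref{BBSCRThm} on $X_v$ with the seminorms $\|\cdot\|'_k=\|\phi_v(\cdot)\|_k$. Your verification of the hypotheses inherited by the conjugated space is more detailed than what the paper writes out, but the argument is the same.
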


\smallskip
The above theorem follows from the previous one by means of a suitable conjugacy.
The method can be found in \cite[Section~4.1]{KGroAPer11}, but we will recall it here briefly for the sake of completeness.
Consider the weights
\[
v_0\!:= 1, \ \ \ \ v_{-n}\!:= w_{-n+1} \cdots w_0 \ \ \text{ and } \ \ v_n\!:= \frac{1}{w_1 \cdots w_n} \ \text{ for } n \geq 1,
\]
the vector space
\[
X_v\!:= \{(x_n)_{n \in \Z} \in \K^\Z : (v_nx_n)_{n \in \Z} \in X\},
\]
and the vector space isomorphism
\[
\phi_v : (x_n)_{n \in \Z} \in X_v \mapsto (v_nx_n)_{n \in \Z} \in X.
\]
Use $\phi_v$ to transfer the topology of $X$ to $X_v$:
$U \subset X_v$ is declared to be open in $X_v$ if and only if $\phi_v(U)$ is open in $X$.
Then $X_v$ is a Fr\'echet space whose topology is induced by the sequence of seminorms given by
\[
\|(x_n)_{n \in \Z}\|'_k\!:= \|\phi_v((x_n)_{n \in \Z})\|_k \ \ \text{ for } (x_n)_{n \in \Z} \in X_v.
\]
Moreover, $(e_n)_{n \in \Z}$ is a basis of $X_v$ and $B_w \circ \phi_v = \phi_v \circ B$, that is, $\phi_v$ establishes a
conjugacy between $B_w$ and $B$. Hence, $B_w$ is chain recurrent if and only if so is $B$.
Thus, Theorem~\ref{BWBSCRThm} follows from Theorem~\ref{BBSCRThm} applied to $X_v$ endowed with the seminorms $\|\cdot\|'_k$.

\smallskip
Let us now consider the case of unilateral (unweighted) backward shifts.

\begin{theorem}\label{UBSCRThm}
Suppose that $X$ is a Fr\'echet sequence space in which the sequence $(e_n)_{n \in \N}$ of canonical vectors is a basis,
$(\|\cdot\|_k)_{k \in \N}$ is an increasing sequence of seminorms that induces the topology of $X$,
and the unilateral backward shift
\[
B : (x_1,x_2,x_3,\ldots) \in X \mapsto (x_2,x_3,x_4,\ldots) \in X
\]
is a well-defined operator. Then $B$ is chain recurrent if and only if
\begin{equation}\label{UBSCR1}
\sum_{n=1}^\infty \frac{1}{\|e_n\|_k} = \infty \ \ \text{ for all } k \in \N.
\end{equation}
\end{theorem}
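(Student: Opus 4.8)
The plan is to follow, in a simplified form, the proof of Theorem~\ref{BBSCRThm}. Endow $X$ with its canonical metric~(\ref{Metric}). The point that makes the unilateral case easier is that $B^i e_i = 0$ for every $i \in \N$, so $(e_i, e_{i-1}, \dots, e_1, 0)$ is a $0$-chain for $B$ from $e_i$ to $0$; hence $e_i \in I_0(B)$ for every $i$, and since $I_0(B)$ is a closed subspace of $X$ (Lemma~\ref{I0O0}) containing the basis $(e_n)_{n\in\N}$, we get $I_0(B) = X$ with no hypothesis whatsoever. By the remark following Lemma~\ref{I0O0}, $B$ is then chain recurrent if and only if $O_0(B)$ is dense in $X$, i.e.\ (being a closed subspace) if and only if $O_0(B) = X$. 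So it remains only to show that $O_0(B) = X$ is equivalent to~(\ref{UBSCR1}).

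For the sufficiency of~(\ref{UBSCR1}), I would show $e_i \in O_0(B)$ for every $i \in \N$ by the construction used in Claim~1 of the proof of Theorem~\ref{BBSCRThm}, which involves only canonical vectors of positive index and so transfers verbatim: given $\delta > 0$, fix $\ell$ with $\|x\|_\ell < \delta/2 \Rightarrow d(x,0) < \delta$; if $\|e_{n_k}\|_k = 0$ for some $k \geq \ell$ and some $n_k > i$, then $0, e_{n_k}, e_{n_k-1}, \dots, e_i$ is a $\delta$-chain for $B$ from $0$ to $e_i$; otherwise, using~(\ref{UBSCR1}) pick $m$ with $t := \sum_{n=i+1}^{i+m} \|e_n\|_k^{-1} > 2/\delta$, set $x_1 := e_{i+m}/(t\|e_{i+m}\|_k)$ and $x_{j+1} := B x_j + e_{i+m-j}/(t\|e_{i+m-j}\|_k)$ for $1 \leq j < m$, and check that $x_m = e_{i+1}$, so that $0, x_1, \dots, x_m, e_i$ is a $\delta$-chain for $B$ from $0$ to $e_i$. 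Consequently $O_0(B) \supseteq \ov{\spa}\{e_i : i \in \N\} = X$, and $B$ is chain recurrent.

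For the necessity, assume $B$ is chain recurrent and fix $k \in \N$; we may assume $\|e_n\|_k \neq 0$ for all $n$, since otherwise the $k$-th series in~(\ref{UBSCR1}) is trivially $\infty$. By the Banach--Steinhaus theorem applied to the coordinate projections $P_n : x \mapsto x_n e_n$ (each continuous because $X \hookrightarrow \K^\N$ continuously, and with $\sup_n \|P_n x\|_k < \infty$ for each $x$ since $\sum_n x_n e_n$ converges in $X$), choose $\delta > 0$ such that $d(x,0) \leq \delta$ forces $\|x_n e_n\|_k < 1$ for every $n$. Fix $r > 0$, let $(z_j)_{j=0}^m$ with $m \geq 1$ be a $\delta$-chain for $B$ from $r e_1$ to itself, and set $y_j := z_j - B z_{j-1}$, $y_j = (y_{j,n})_n$; then $|y_{j,n}| < \|e_n\|_k^{-1}$ for all $j,n$. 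Telescoping gives $z_m = B^m z_0 + \sum_{j=1}^m B^{m-j} y_j$; since $B^m z_0 = r B^m e_1 = 0$ and the first coordinate of $B^{m-j} y_j$ is $y_{j,\,m-j+1}$, comparing first coordinates yields $r = \sum_{j=1}^m y_{j,\,m-j+1}$, whence $r < \sum_{j=1}^m \|e_{m-j+1}\|_k^{-1} = \sum_{n=1}^m \|e_n\|_k^{-1} \leq \sum_{n=1}^\infty \|e_n\|_k^{-1}$. Since $r > 0$ is arbitrary, $\sum_{n=1}^\infty \|e_n\|_k^{-1} = \infty$, which is~(\ref{UBSCR1}).

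As this is essentially a streamlined repetition of arguments already carried out for the bilateral shift, I do not anticipate any genuine obstacle. The only points needing a little care are the unconditional identity $I_0(B) = X$ (which is precisely why the unilateral characterization involves a single series rather than two), the Banach--Steinhaus step producing a $\delta$ valid uniformly in $n$, and bookkeeping the correct coordinate in the telescoping identity.
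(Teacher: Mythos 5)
Your proposal is correct and follows essentially the same route as the paper: the paper's proof likewise reduces sufficiency to Claim~1 of Theorem~\ref{BBSCRThm} (noting that $I_0(B)=X$ automatically because $B^ie_i=0$) and proves necessity by the same Banach--Steinhaus/telescoping argument applied to a $\delta$-chain from $re_1$ to itself. You have merely spelled out the details that the paper leaves as references to the bilateral case, and all of them check out.
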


\begin{proof}
If (\ref{UBSCR1}) holds, then the proof of Claim~1 in Theorem~\ref{BBSCRThm} shows that $e_i \in O_0(B)$ for all $i \in \N$.
Since $B$ is a unilateral backward shift, this implies that $B$ is chain recurrent.
The converse is proved as in the penultimate paragraph of the proof of Theorem~\ref{BBSCRThm}.
\end{proof}

\smallskip
The above theorem can be generalized to unilateral weighted backward shifts as follows.

\begin{theorem}\label{UWBSCRThm}
Suppose that $X$ is a Fr\'echet sequence space in which the sequence $(e_n)_{n \in \N}$ of canonical vectors is a basis,
$(\|\cdot\|_k)_{k \in \N}$ is an increasing sequence of seminorms that induces the topology of $X$,
$w\!:= (w_n)_{n \in \N}$ is a sequence of nonzero scalars, and the unilateral weighted backward shift
\[
B_w : (x_1,x_2,x_3,\ldots) \in X \mapsto (w_2x_2,w_3x_3,w_4x_4,\ldots) \in X
\]
is a well-defined operator. Then $B_w$ is chain recurrent if and only if
\[
\sum_{n=1}^\infty \frac{|w_1 \cdots w_n|}{\|e_n\|_k} = \infty \ \ \text{ for all } k \in \N.
\]
\end{theorem}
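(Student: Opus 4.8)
The plan is to reduce Theorem~\ref{UWBSCRThm} to Theorem~\ref{UBSCRThm} by the same conjugacy technique used to pass from Theorem~\ref{BBSCRThm} to Theorem~\ref{BWBSCRThm}. First I would introduce the weights
\[
v_1 := 1 \qquad\text{and}\qquad v_n := \frac{1}{w_2 \cdots w_n} \ \text{ for } n \geq 2,
\]
the vector space
\[
X_v := \{(x_n)_{n \in \N} \in \K^\N : (v_n x_n)_{n \in \N} \in X\},
\]
and the vector space isomorphism $\phi_v : (x_n)_{n \in \N} \in X_v \mapsto (v_n x_n)_{n \in \N} \in X$. Transferring the topology of $X$ to $X_v$ through $\phi_v$ turns $X_v$ into a Fr\'echet sequence space whose topology is induced by the seminorms $\|(x_n)_{n \in \N}\|'_k := \|\phi_v((x_n)_{n \in \N})\|_k$, so that $\|e_n\|'_k = |v_n|\,\|e_n\|_k$ for all $k,n$; the inclusion $X_v \hookrightarrow \K^\N$ is continuous because every $v_n$ is nonzero. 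One then checks directly, using the recursion $v_{n+1} = v_n/w_{n+1}$, that $(e_n)_{n \in \N}$ is a basis of $X_v$ and that $B_w \circ \phi_v = \phi_v \circ B$, where $B$ denotes the unweighted unilateral backward shift on $X_v$. Since $\phi_v$ is a linear homeomorphism, it is a conjugacy preserving the underlying uniform structure, and hence $B_w$ is chain recurrent if and only if $B$ is.

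Next I would apply Theorem~\ref{UBSCRThm} to $X_v$ endowed with the seminorms $\|\cdot\|'_k$: the operator $B$ is chain recurrent if and only if
\[
\sum_{n=1}^\infty \frac{1}{\|e_n\|'_k} = \frac{1}{\|e_1\|_k} + \sum_{n=2}^\infty \frac{|w_2 \cdots w_n|}{\|e_n\|_k} = \infty \quad\text{for all } k \in \N.
\]
Finally, one observes that, since $w_1$ is a fixed nonzero scalar and the convergence of a series is unaffected by a single nonzero multiplicative constant,
\[
\sum_{n=1}^\infty \frac{|w_1 \cdots w_n|}{\|e_n\|_k} = |w_1| \Big( \frac{1}{\|e_1\|_k} + \sum_{n=2}^\infty \frac{|w_2 \cdots w_n|}{\|e_n\|_k} \Big),
\]
so the last two conditions are equivalent; here we use the convention that the empty product equals $1$, so the $n = 1$ term of the stated series is $|w_1|/\|e_1\|_k$. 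This yields exactly the characterization claimed in Theorem~\ref{UWBSCRThm}.

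I do not expect a genuine obstacle here: all the substantive work is already contained in Theorems~\ref{BBSCRThm} and \ref{UBSCRThm}. The only points demanding a little care are the routine verification that $\phi_v$ is indeed a topological conjugacy between $B_w$ and $B$ (entirely parallel to the discussion following Theorem~\ref{BWBSCRThm}) and the purely cosmetic reconciliation of the products $w_2 \cdots w_n$ produced by the conjugacy with the products $w_1 \cdots w_n$ appearing in the statement, which is harmless because $w_1 \neq 0$ is fixed and never enters the definition of $B_w$.
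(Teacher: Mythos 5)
Your proposal is correct and follows exactly the route the paper intends: the paper states that Theorem~\ref{UWBSCRThm} ``can be easily deduced from Theorem~\ref{UBSCRThm} by means of a suitable conjugacy,'' namely the same $\phi_v$ construction detailed after Theorem~\ref{BWBSCRThm}, which is precisely what you carry out. Your choice $v_1:=1$, $v_n:=1/(w_2\cdots w_n)$ satisfies the required recursion $v_{n+1}=v_n/w_{n+1}$, and your observation that the resulting series differs from the stated one only by the fixed nonzero factor $|w_1|$ (which never enters the definition of $B_w$) correctly disposes of the cosmetic discrepancy.
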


\smallskip
As in the case of bilateral shifts, Theorem~\ref{UWBSCRThm} can be easily deduced from Theorem~\ref{UBSCRThm}
by means of a suitable conjugacy.

\smallskip

The characterization of transitivity for weighted shifts on Fréchet sequence spaces was obtained in \cite{KGro00} (see also 4.1 in \cite{KGroAPer11}).
Actually, under the above notation, the unilateral weighted shift $B_w$ is transitive on $X$ if and only if there exists an increasing sequence $(n_k)_k$ in $\N$ tending to infinity such that
\[
 \frac{1}{|w_1 \cdots w_{n_k}|}\, e_{n_k} \to 0 \ \text{ in } X.
\]
In the case of a bilateral weighted shift, transitivity is equivalent to the existence of an increasing sequence $(n_k)_k$ in $\N$ tending to infinity such that
\[
 \frac{1}{|w_{j+1} \cdots w_{j+n_k}|}\, e_{j+n_k} \to 0 \ \mbox{ and } \ |w_{j+1-n_k} \cdots w_{j}|\, e_{j-n_k} \to 0,
\]
in $X$, for any $j \in \Z$.

Since transitivity obviously implies chain recurrence, the main interesting examples of chain recurrent weighted shifts are those that are not transitive. We will provide some natural examples, and to do so we need to recall the concept of Köthe sequence spaces (see \cite{kothe1969topological, meise_vogt1997introduction}), which can be viewed as an intersection of a decreasing sequence of weighted $\ell^p$-spaces, for a fixed $p$, or weighted $c_0$-spaces, when the matrix below consists of non-zero weights:

An infinite matrix $A\!:= (a_{j,k})_{j,k \in \N}$ of non-negative weights is a {\em K\"{o}the matrix} if, for each $j \in \N$
there exists a $k \in \N$ with $a_{j,k} > 0$, and $0 \le a_{j,k} \le a_{j,k+1}$ for all $j,k \in \N$.
Given $1 \le p < \infty$, we consider the Fr\'{e}chet sequence spaces
\[
\lambda_p(A)\!:= \big\{x \in \K^\N : \norm{x}_k\!:= \left(\sum_{j=1}^{\infty} |x_j|^p a_{j,k}^p \right)^{1/p} < \infty, \ \forall \ k \in \N \big\},
\]
and for $p = 0$,
\[
\lambda_0(A)\!:= \big\{x \in \K^\N : \lim_{j \to \infty} x_j a_{j,k} = 0, \norm{x}_k\!:= \sup_{j \in \N} |x_j| a_{j,k}, \ \forall \ k \in \N \big\},
\]
which are the associated K\"{o}the sequence spaces.

K\"{o}the spaces are certainly a natural class of Fr\'{e}chet sequence spaces.
Obviously, if the entries $a_{j,k}=1$ for all $j,k \in \N$, then we have $\lambda_p(A) = \ell^p$ and $\lambda_0(A) = c_0$.

For instance, the derivative operator $D$ of many function spaces $X$ can be represented as a weighted backward shift if the
Taylor representation around $0$ of functions $f \in X$ allows an isomorphism of $X$ with a K\"{o}the space.

In order to have that a unilateral  weighted backward shift $B_w$ is well-defined (equivalently, continuous) on a Köthe
sequence space, we need to consider some conditions that relate the weight sequence $w$ with the Köthe matrix $A$.
It is well known (see, e.g., \cite{martinez-gimenez_peris2002chaos}) that $B_w$ is continuous if and only if

\begin{equation}
 \forall \ n \in \N, \ \exists \ m > n \ : \ \sup_{i \in \N} w_{i+1} \frac{a_{i,n}}{a_{i+1,m}} < \infty. \label{cont_weight}
\end{equation}

\begin{examples}
(A) We consider three different Hilbert spaces of holomorphic functions on the unit disc.
Namely, the {\em Bergman space} $A^2$ of functions $f \in \mathcal{H}(\D)$ such that
\[
\norm{f}^2\!:= \frac{1}{\pi} \int_{\D} |f(z)|^2 d\lambda(z) < \infty,
\]
the {\em Dirichlet space} $\mathcal{D}$ of functions $f \in \mathcal{H}(\D)$ such that
\[
\norm{f}^2\!:= |f(0)|^2 + \frac{1}{\pi} \int_{\D} |f'(z)|^2 d\lambda(z) < \infty,
\]
where in both cases $\lambda$ denotes the two-dimensional Lebesgue measure,
and the {\em Hardy space} $H^2$ of functions $f \in \mathcal{H}(\D)$ such that
\[
\norm{f}^2\!:= \lim_{r \to 1^-}\frac{1}{2\pi} \int_0^{2\pi} |f(re^{it})|^2 dt < \infty.
\]
We have that $\mathcal{D} \subset H^2 \subset A^2$.
Moreover, via the identification with a sequence space by $f(z) = \sum_{n \geq 0} a_nz^n\mapsto (a_n)_n$, we know that
\[
\mathcal{D} = \ell^2(v) \mbox{ for } v\!:= (1,1,2,3,\dots), \ H^2 = \ell^2, \mbox{ and }
A^2 = \ell^2(v) \mbox{ for } v\!:= (1,\frac{1}{2},\frac{1}{3},\dots ),
\]
where
\[
\ell^2(v)\!:=\big\{a = (a_n)_n \in \C^{\N_0} : \norm{a}^2\!:=\sum_{n = 0}^\infty |a_n|^2v_n < \infty\big\}.
\]
A natural operator on these spaces is the (unweighted) backward shift that corresponds to
$(Bf)(z)\!:= (f(z) - f(0))/z$, $z \neq 0$, $(Bf)(0)\!:= f'(0)$.
The behavior concerning transitivity of $B$ is different on these spaces, since $B$ is transitive on the Bergman space $A^2$
by the above characterization, but it is not transitive on $\mathcal{D}$ or $H^2$ since $\norm{B} = 1$ in both spaces.
On the other hand, we have that
\[
\sum_{n = 0}^\infty \frac{1}{\norm{e_n}} = \sum_{n = 0}^\infty 1 = \infty \mbox{ on } H^2, \mbox{ and }
\sum_{n = 0}^\infty \frac{1}{\norm{e_n}} = 1 + \sum_{n = 1}^\infty \frac{1}{n} = \infty \mbox{ on } \mathcal{D},
\]
that is, $B$ is chain recurrent on these spaces.

\noindent
(B) Now we will consider non-normable Köthe spaces. For $A\!:= (k^j)_{j,k \in \N}$, we have that
$\lambda_p(A) = \lambda_2(A) = \mathcal{H}(\C)$.
If $A\!:= (j^k)_{j,k \in \N}$, we have that $\lambda_p(A) = \lambda_2(A)=:\! s$, the space of rapidly decreasing sequences,
and for the matrix $A\!:= (e^{-j/k})_{j,k \in \N}$, we have that $\lambda_p(A) = \lambda_2(A) = \mathcal{H}(\D)$,
the space of the holomorphic functions on the unit disc.
We obviously have that $\mathcal{H}(\C) \subset s \subset \mathcal{H}(\D)$.
The derivative operator $D$ corresponds to the weighted shift $B_w$ given by $w\!:= (1,2,3,\dots )$, and $D$ is transitive
(thus, chain recurrent) on the three spaces. If, as in (A), we consider the unweighted shift $B$, then
\[
\sum_{n=1}^\infty \frac{1}{\|e_n\|_k} < \infty \ \mbox{ for } \ k\geq 2,
\]
in $\mathcal{H}(\C)$ or $s$, so $B$ is not chain recurrent on these spaces. For the space $\mathcal{H}(\D)$ we have
\[
\sum_{n=1}^\infty \frac{1}{\|e_n\|_k} = \sum_{n=1}^\infty e^{n/k} = \infty \ \mbox{ for every } k \in \N,
\]
and $B$ is chain recurrent on $\mathcal{H}(\D)$. Actually, the transitivity condition is also fullfilled.
If we set $X\!:= \lambda_2(A)$ for $A\!:= ((\log(j+1))^k)_{j,k\in\N}$, then $s \subset X \subset \mathcal{H}(\D)$ and
\[
\sum_{n=1}^\infty \frac{1}{\|e_n\|_k} = \sum_{n=1}^\infty \frac{1}{(\log(n+1))^k} = \infty \ \mbox{ for every } k \in \N,
\]
so $B$ is chain recurrent on $X$ too, but the transitivity condition is not satisfied in this case.
\end{examples}


\section{Periodic shadowing for operators on Banach spaces}\label{PeriodicShadowing}

Our goal in this section is to investigate the notion of periodic shadowing for continuous linear operators on Banach spaces.

Given a metric space $X$, recall that $f : X \to X$ has the {\em positive periodic shadowing pro\-perty}
\cite{PKos05,OsiPilTik10} if for every $\eps > 0$, there exists $\delta > 0$ such that any periodic $\delta$-pseudotrajectory
$(x_n)_{n \in \N_0}$ of $f$ is $\eps$-shadowed by a periodic trajectory of $f$ (a sequence $(y_n)_{n \in \N_0}$ is
{\em periodic} if there exists $p \in \N$ such that $y_{n+p} = y_n$ for all $n \in \N_0$; such a $p$ is called a {\em period} for
sequence $(y_n)_{n \in \N_0}$). If $f$ is bijective, then the {\em periodic shadowing property} is defined by replacing the set
$\N_0$ by the set $\Z$ in the above definition.

Let us say that a continuous linear operator $T$ on a Banach space $X$ is {\em generalized hyperbolic} if there is
a direct sum decomposition
\[
X = M \oplus N,
\]
where $M$ and $N$ are closed subspaces of $X$ with the following properties ($r(T)$ denotes the spectral radius of $T$):

\smallskip \noindent
(a) $T(M) \subset M$ and $r(T|_M) < 1$;

\smallskip \noindent
(b) $T|_N : N \to T(N)$ is bijective, $T(N)$ is closed, $T(N) \supset N$ and $r((T|_N)^{-1}|_N) < 1$.

\smallskip
If $T$ is invertible, then condition (b) can be rewritten as follows:

\smallskip \noindent
(b') $T^{-1}(N) \subset N$ and $r(T^{-1}|_N) < 1$.

\smallskip
In the case of invertible operators, this class appeared in \cite{BerCirDarMesPuj18}, where it was proved that these operators have the
shadowing property, enabling the construction of the first examples of operators that have the shadowing property but are not hyperbolic.
The terminology ``generalized hyperbolic'' was introduced in \cite{CirGolPuj21}, where many additional dynamical properties of these
operators were investigated. The fact that every invertible genera\-lized hyperbolic operator is structurally stable was established
in \cite{NBerAMes20}. For not necessarily invertible operators, this class appeared in \cite{NBerAMesArxiv}.

It is known that generalized hyperbolic operators exhibit several types of shadowing properties
(see \cite{AlvBerMes21,BerCirDarMesPuj18,NBerAMesArxiv,JiaWanLi19}).
We shall now prove that they also have the periodic shadowing property.

\begin{theorem}\label{PS}
Every generalized hyperbolic operator $T$ on a Banach space $X$ has the posi\-tive periodic shadowing property.
\end{theorem}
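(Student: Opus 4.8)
The plan is to reduce the statement to the solvability of a finite cyclic system of linear difference equations, and to solve that system by separating the stable and unstable parts of $T$. Fix $\eps>0$, let $(x_n)_{n\in\N_0}$ be a periodic $\delta$-pseudotrajectory of $T$ of period $p$, and put $\eta_n:=x_{n+1}-Tx_n$, so $\norm{\eta_n}\le\delta$ and $\eta_{n+p}=\eta_n$. If we can find $z_0,\dots,z_{p-1}\in X$, extended periodically by $z_{n+p}:=z_n$, with
\[
z_{n+1}=Tz_n-\eta_n\qquad(n\in\N_0),
\]
then $y_n:=x_n+z_n$ satisfies $Ty_n=y_{n+1}$ and $y_{n+p}=y_n$, so $y:=y_0$ is a periodic point of $T$ with $T^ny=y_n$ and $\norm{T^ny-x_n}=\norm{z_n}$ for all $n$. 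Hence the theorem reduces to the claim that the cyclic recursion above always has a solution with $\sup_n\norm{z_n}\le K\delta$, where $K$ depends only on $T$ (not on $p$ or on the pseudotrajectory); it is then enough to choose $\delta$ with $K\delta<\eps$.

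To establish this claim I would use the splitting $X=M\oplus N$ with its bounded projections $\pi_M$, $\pi_N$, together with $A:=T|_M:M\to M$ and $S:=(T|_N)^{-1}|_N:N\to N$, which satisfy $r(A)<1$, $r(S)<1$ and $TSv=v$ for all $v\in N$; fix $C\ge1$ and $\rho<1$ with $\norm{A^j}\le C\rho^j$ and $\norm{S^j}\le C\rho^j$ for all $j\ge0$. Since $M$ is $T$-invariant, $T$ is block upper-triangular, so, writing $G:=\pi_N T|_N$ (note $GSv=v$ on $N$), $R:=\pi_M T|_N$, $a_n:=\pi_M z_n$, $b_n:=\pi_N z_n$, the $N$-component decouples:
\[
b_{n+1}=Gb_n-\pi_N\eta_n,\qquad a_{n+1}=Aa_n+Rb_n-\pi_M\eta_n .
\]
Since $r(S)<1$ we have $\norm{(I-S^p)^{-1}}\le C/(1-\rho)$, and a direct check shows that $b_n:=(I-S^p)^{-1}\sum_{k=1}^{p}S^k\,\pi_N\eta_{n+k-1}$ solves the $b$-recursion, with $\norm{b_n}$ bounded by a multiple of $\delta$ independent of $p$. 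Plugging this into the $a$-recursion turns its right-hand side into a known periodic sequence of norm $O(\delta)$, and since $r(A)<1$ the operator $I-A^p$ is invertible with $\norm{(I-A^p)^{-1}}\le C/(1-\rho)$, so $a_n:=(I-A^p)^{-1}\sum_{i=0}^{p-1}A^i(Rb_{n-1-i}-\pi_M\eta_{n-1-i})$ solves the $a$-recursion with $\norm{a_n}=O(\delta)$. Then $z_n:=a_n+b_n$ proves the claim. (If $T$ is invertible one runs the same computation over $\Z$, yielding the two-sided periodic shadowing property as well.)

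I expect the unstable block to be the main obstacle. Because $T$ need not map $N$ into $N$ and $T(N)$ may strictly contain $N$, on $N$ one has only the one-sided inverse $S$: the operator $SG$ is a projection, not the identity, so the $b$-recursion cannot be solved simply by inverting $G$, and one must verify both that the explicit $S$-summation really is a solution and that its norm is bounded uniformly in the period $p$ (this uniformity is exactly what makes the final choice of $\delta$ legitimate). A secondary difficulty is the cross-term $R=\pi_M T|_N$, which feeds the unstable component back into the stable one; it is handled by solving the $N$-equation first and absorbing $Rb_n$ into the inhomogeneous term of the $M$-equation. Finally, it would not be enough to invoke the known fact that generalized hyperbolic operators have the shadowing property, since a trajectory $\eps$-shadowing an infinite periodic pseudotrajectory need not itself be periodic.
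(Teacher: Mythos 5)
Your proposal is correct and follows essentially the same route as the paper: both arguments construct the periodic shadowing point from the splitting $X=M\oplus N$ by summing the stable ($M$) components of the errors $\eta_n=x_{n+1}-Tx_n$ forward along powers of $T|_M$ and the unstable ($N$) components along powers of the one-sided inverse $S=(T|_N)^{-1}|_N$, with the geometric decay of both series giving a bound on $\sup_n\|z_n\|$ that is uniform in the period $p$. Your closed forms $(I-S^p)^{-1}\sum_{k=1}^pS^k\pi_N\eta_{n+k-1}$ and $(I-A^p)^{-1}\sum_{i=0}^{p-1}A^i(\cdot)$ unfold, via periodicity, into exactly the infinite series the paper writes down and verifies by induction, so the two proofs differ only in presentation (solving the cyclic difference equation block by block versus exhibiting the point and checking the identity).
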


\begin{proof}
Let $S$ denote the operator $(T|_N)^{-1}|_N$ on $N$.
For each $x \in X$, write $x = x^{(1)} + x^{(2)}$ with $x^{(1)} \in M$ and $x^{(2)} \in N$. Let $\alpha > 0$ be such that
\begin{equation}\label{PS1}
\|x^{(1)}\| \leq \alpha\, \|x\| \ \text{ and } \ \|x^{(2)}\| \leq \alpha\, \|x\| \ \text{ for all } x \in X.
\end{equation}
Since $r(T|_M) < 1$ and $r(S) < 1$, there exist $0 < t < 1$ and $\beta \geq 1$ such that
\begin{equation}\label{PS2}
\|T^ny\| \leq \beta\, t^n \|y\| \ \text{ and } \ \|S^nz\| \leq \beta\, t^n \|z\| \ \ \ \ (n \in \N_0, y \in M, z \in N).
\end{equation}
Given $\eps > 0$, put $\delta\!:= \frac{(1-t) \eps}{3 \alpha \beta}\cdot$ Let $(x_n)_{n \in \N_0}$ be a periodic $\delta$-pseudotrajectory
of $T$ with period $p$, say. For each $n \in \N_0$, let $y_n\!:= x_{n+1} - Tx_n$. Note that the sequence $(y_n)_{n \in \N_0}$ is also
periodic with period $p$. We claim that
\[
x\!:= x_0 + \sum_{j=1}^\infty S^j y^{(2)}_{j-1} - \sum_{j=0}^{p-1} \sum_{k=0}^\infty T^{kp+j} y^{(1)}_{p-j-1}
\]
is a periodic vector whose trajectory $\eps$-shadows $(x_n)_{n \in \N_0}$. Indeed,
\begin{align*}
T^px &= T^px_0 + \sum_{j=1}^p T^{p-j} y^{(2)}_{j-1}
      + \sum_{j=p+1}^\infty S^{j-p} y^{(2)}_{j-1}
      - \sum_{j=0}^{p-1} \sum_{k=1}^\infty T^{kp+j} y^{(1)}_{p-j-1}\\
     &= T^px_0 + \sum_{j=0}^{p-1} T^j y^{(2)}_{p-j-1}
      + \sum_{j=1}^\infty S^j y^{(2)}_{j-1}
      - \sum_{j=0}^{p-1} \sum_{k=0}^\infty T^{kp+j} y^{(1)}_{p-j-1}
      + \sum_{j=0}^{p-1} T^j y^{(1)}_{p-j-1}\\
     &= T^px_0 + \sum_{j=0}^{p-1} T^j y_{p-j-1}
      + \sum_{j=1}^\infty S^j y^{(2)}_{j-1}
      - \sum_{j=0}^{p-1} \sum_{k=0}^\infty T^{kp+j} y^{(1)}_{p-j-1}\\
     &= x,
\end{align*}
because
\begin{align*}
T^px_0 + \sum_{j=0}^{p-1} T^j y_{p-j-1}
  &= T^px_0 + \sum_{j=0}^{p-1} T^j(x_{p-j} - Tx_{p-j-1})\\
  &= \sum_{j=0}^p T^j x_{p-j} - \sum_{j=1}^p T^j x_{p-j}\\
  &= x_0.
\end{align*}
On the other hand,
\begin{equation}\label{PS3}
x_n - T^n x = \sum_{j=0}^{n-1} T^jy^{(1)}_{n-j-1} - \sum_{j=1}^\infty S^jy^{(2)}_{n+j-1}
   + \sum_{j=0}^{p-1} \sum_{k=0}^\infty T^{kp+j+n} y^{(1)}_{p-j-1},
\end{equation}
for all $n \in \N_0$. In fact, the case $n = 0$ follows immediately from the definition of $x$. Assume that (\ref{PS3}) holds
for a certain $n \geq 0$. Then,
\begin{align*}
x_{n+1} &- T^{n+1}x
   = y_n + T(x_n - T^nx)\\
  &= y_n + \sum_{j=0}^{n-1} T^{j+1}y^{(1)}_{n-j-1}
   - \sum_{j=1}^\infty S^{j-1}y^{(2)}_{n+j-1}
   + \sum_{j=0}^{p-1} \sum_{k=0}^\infty T^{kp+j+n+1} y^{(1)}_{p-j-1}\\
  &= y^{(1)}_n + y^{(2)}_n + \sum_{j=1}^n T^jy^{(1)}_{n-j}
   - y^{(2)}_n - \sum_{j=1}^\infty S^jy^{(2)}_{n+j}
   + \sum_{j=0}^{p-1} \sum_{k=0}^\infty T^{kp+j+n+1} y^{(1)}_{p-j-1}\\
  &= \sum_{j=0}^n T^jy^{(1)}_{(n+1)-j-1}
   - \sum_{j=1}^\infty S^jy^{(2)}_{(n+1)+j-1}
   + \sum_{j=0}^{p-1} \sum_{k=0}^\infty T^{kp+j+(n+1)} y^{(1)}_{p-j-1},
\end{align*}
proving that (\ref{PS3}) also holds with $n+1$ in place of $n$. Now, by (\ref{PS1}), (\ref{PS2}) and (\ref{PS3}),
\[
\|x_n - T^nx\| < \frac{3 \alpha \beta \delta}{1-t} = \eps \ \ \text{ for all } n \in \N_0,
\]
which completes the proof.
\end{proof}

\begin{corollary}\label{PSCor}
Every invertible generalized hyperbolic operator $T$ on a Banach space $X$ has the periodic shadowing property.
\end{corollary}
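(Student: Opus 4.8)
The plan is to deduce Corollary~\ref{PSCor} directly from Theorem~\ref{PS} by exploiting the fact that, for an invertible operator, a periodic trajectory is a genuinely \emph{finite} object: it is determined by its values on one period. So once the positive (forward) periodic shadowing property is known, the bilateral version comes for free.

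Concretely, I would fix $\eps > 0$ and let $\delta > 0$ be the number associated to $\eps$ by the positive periodic shadowing property of $T$, which holds by Theorem~\ref{PS} since every invertible generalized hyperbolic operator is in particular generalized hyperbolic. Let $(x_n)_{n \in \Z}$ be any periodic $\delta$-pseudotrajectory of $T$, say with period $p \in \N$, so that $\|Tx_n - x_{n+1}\| \leq \delta$ for all $n \in \Z$ and $x_{n+p} = x_n$ for all $n \in \Z$. Then $(x_n)_{n \in \N_0}$ is a positive periodic $\delta$-pseudotrajectory of $T$ with period $p$, so Theorem~\ref{PS} furnishes a periodic vector $x \in X$, with $T^p x = x$, whose forward trajectory $\eps$-shadows it, i.e.\ $\|x_n - T^n x\| < \eps$ for all $n \in \N_0$.

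It remains to extend this inequality to all $n \in \Z$. Since $T$ is invertible, $T^n x$ is defined for every $n \in \Z$, and $T^p x = x$ forces $T^{n+p} x = T^n x$ for all $n \in \Z$, so $(T^n x)_{n \in \Z}$ is a periodic trajectory of $T$ with period $p$. Given an arbitrary $n \in \Z$, write $n = mp + r$ with $m \in \Z$ and $r \in \{0,\ldots,p-1\}$; then $x_n = x_r$ and $T^n x = T^r x$, whence $\|x_n - T^n x\| = \|x_r - T^r x\| < \eps$. Thus $(x_n)_{n \in \Z}$ is $\eps$-shadowed by the periodic trajectory of $x$, and $T$ has the periodic shadowing property.

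I do not expect any real obstacle here: the substance is entirely contained in Theorem~\ref{PS}, and the step from the positive to the bilateral statement is forced by periodicity, because both the pseudotrajectory and the shadowing orbit are pinned down by finitely many terms. (One could alternatively invoke the general principle, used elsewhere in the paper, that for invertible maps such notions depend only on the underlying uniform structure and on finite windows; but the short argument above seems the cleanest way to phrase it.)
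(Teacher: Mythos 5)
Your proposal is correct and is essentially the argument the paper intends (the corollary is stated without proof precisely because this deduction from Theorem~\ref{PS} is routine): restrict the bilateral periodic pseudotrajectory to $\N_0$, apply positive periodic shadowing, and use periodicity together with invertibility to propagate the estimate to all $n \in \Z$. One tiny remark: the bare statement of Theorem~\ref{PS} only guarantees a shadowing point $x$ with $T^q x = x$ for \emph{some} period $q$, not necessarily $q = p$; your argument still goes through verbatim upon replacing $p$ by a common multiple of $p$ and $q$ (and in any case the proof of Theorem~\ref{PS} does construct $x$ with $T^p x = x$).
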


Let us recall the following basic fact (see \cite[Lemma~19]{NBerAMes21}, for instance).

\begin{lemma}\label{Unilateral1}
If $(w_n)_{n \in \N}$ is a bounded sequence of scalars, then the following assertions are equivalent:
\begin{itemize}
\item [\rm   (i)] $\displaystyle \lim_{n \to \infty} \sup_{k \in \N} |w_k w_{k+1} \cdots w_{k+n}|^\frac{1}{n} < 1$;
\item [\rm  (ii)] $\displaystyle \sup_{k \in \N} \sum_{n=0}^\infty |w_k w_{k+1} \cdots w_{k+n}| < \infty$;
\item [\rm (iii)] $\displaystyle \sup_{k \in \N} \sum_{n=0}^{k-1} |w_k w_{k-1} \cdots w_{k-n}| < \infty$.
\end{itemize}
\end{lemma}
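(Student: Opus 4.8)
The plan is to fix notation $p_n(k):=|w_k w_{k+1}\cdots w_{k+n}|$ (a product of $n+1$ consecutive factors) and $M:=\sup_n|w_n|<\infty$, and to establish the four implications $\text{(i)}\Rightarrow\text{(ii)}$, $\text{(ii)}\Rightarrow\text{(i)}$, $\text{(i)}\Rightarrow\text{(iii)}$, $\text{(iii)}\Rightarrow\text{(i)}$. I would first record that the limit in (i) actually exists: since $\sup_k|w_k\cdots w_{k+n+m+1}|\le(\sup_k|w_k\cdots w_{k+n}|)(\sup_k|w_k\cdots w_{k+m}|)$, Fekete's subadditive lemma (applied after taking logarithms) shows $\lim_{n\to\infty}\big(\sup_k p_n(k)\big)^{1/n}$ exists in $[0,M]$; hence (i) is equivalent to $\limsup_n\sup_k p_n(k)^{1/n}<1$, and in the implications out of (i) it is enough to work with this $\limsup$.

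For $\text{(i)}\Rightarrow\text{(ii)}$ and $\text{(i)}\Rightarrow\text{(iii)}$ I would use a uniform root test. Letting $q<1$ be the limit in (i) and picking $r$ with $q<r<1$, there is $N$ with $p_n(k)\le r^n$ for all $k$ and all $n\ge N$. Since $p_n(k)\le M^{n+1}$ always, the finitely many terms with $n<N$ contribute at most a constant $C_0:=\sum_{n<N}M^{n+1}$ independent of $k$, while the tail $n\ge N$ is dominated by the convergent geometric series $\sum_n r^n$. For (ii) this bounds $\sum_{n\ge0}|w_k\cdots w_{k+n}|$ uniformly in $k$; for (iii) one observes $|w_k w_{k-1}\cdots w_{k-n}|=p_n(k-n)$, so $\sum_{n=0}^{k-1}|w_k\cdots w_{k-n}|=\sum_{n=0}^{k-1}p_n(k-n)$ is controlled by the very same estimate.

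The heart of the matter is $\text{(ii)}\Rightarrow\text{(i)}$, and the key device is to make the defining sum self-referential. Put $S_k:=\sum_{n=0}^\infty p_n(k)$ and $C:=\sup_k S_k<\infty$; factoring out $|w_k|$ gives the recursion $S_k=|w_k|(1+S_{k+1})$, whence $|w_k|=S_k/(1+S_{k+1})$. Substituting and telescoping,
\[
p_n(k)=\prod_{j=0}^n\frac{S_{k+j}}{1+S_{k+j+1}}=\frac{S_k}{1+S_{k+n+1}}\prod_{j=1}^n\frac{S_{k+j}}{1+S_{k+j}}\le C\left(\frac{C}{1+C}\right)^n,
\]
using that $t\mapsto t/(1+t)$ is increasing together with $S_{k+j}\le C$. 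Taking $n$-th roots and letting $n\to\infty$ gives $\sup_k p_n(k)^{1/n}\to C/(1+C)<1$, i.e.\ (i). The implication $\text{(iii)}\Rightarrow\text{(i)}$ is entirely parallel with the backward partial sums $T_k:=\sum_{n=0}^{k-1}|w_k w_{k-1}\cdots w_{k-n}|$, which satisfy $T_k=|w_k|(1+T_{k-1})$ (with the convention $T_0:=0$); the same telescoping yields $|w_k w_{k-1}\cdots w_{k-n}|=p_n(k-n)\le C'\big(C'/(1+C')\big)^n$ with $C':=\sup_k T_k$, hence (i).

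The main obstacle is precisely discovering the telescoping argument: a direct attempt to bound the partial products of a long product works only when $M\le1$ (for $M>1$ the crude estimate $|w_{k+m+1}\cdots w_{k+n}|\le M^{n-m}$ is far too lossy), so one really has to exploit the recursion $S_k=|w_k|(1+S_{k+1})$ that the hypothesis forces on the tail sums. Everything else — the root-test splittings and the verification that the various recursions hold — is routine bookkeeping.
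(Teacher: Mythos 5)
Your proof is correct, but note that the paper does not actually prove this lemma: it is quoted verbatim from \cite[Lemma~19]{NBerAMes21}, so there is no in-paper argument to compare against. Your write-up is a valid self-contained proof. The two easy implications (i)~$\Rightarrow$~(ii),(iii) via the uniform root test are exactly what one expects, and your observation that $|w_k w_{k-1}\cdots w_{k-n}|=p_n(k-n)$ correctly reduces (iii) to the same estimate. The substantive step, (ii)~$\Rightarrow$~(i), is handled by the exact recursion $S_k=|w_k|(1+S_{k+1})$ and the telescoping identity $p_n(k)=\frac{S_k}{1+S_{k+n+1}}\prod_{j=1}^{n}\frac{S_{k+j}}{1+S_{k+j}}\le C\bigl(\tfrac{C}{1+C}\bigr)^n$; this is a clean and somewhat slicker route than the argument one usually sees for such statements (namely, using $p_N(k)=p_n(k)\,p_{N-n-1}(k+n+1)$ to get $N\,p_N(k)\le C^2$, hence $\sup_k p_N(k)\le\tfrac12$ for large $N$, and then submultiplicativity). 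Two cosmetic points: the conclusion of the telescoping bound is $\limsup_n\sup_k p_n(k)^{1/n}\le C/(1+C)$, not convergence \emph{to} that value, which is all you need given your Fekete remark; and the Fekete step should be stated so as to allow $\log\sup_k p_n(k)=-\infty$ when zeros of $w$ occur in every window (the lemma only assumes boundedness), though in that degenerate case (i) holds trivially. Neither point affects correctness.
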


We shall now prove that positive shadowing and positive periodic shadowing coincide for unilateral weighted backward shifts
on the classical Banach sequence spaces $\ell_p(\N)$ ($1 \leq p < \infty$) and $c_0(\N)$.

\begin{theorem}\label{Characterization1}
Let $X\!:= \ell_p(\N)$ $(1 \leq p < \infty)$ or $X\!:= c_0(\N)$.
Let $w\!:= (w_n)_{n \in \N}$ be a bounded sequence of nonzero scalars and consider the unilateral weighted backward shift
\[
B_w : (x_1,x_2,x_3,\ldots) \in X \mapsto (w_2x_2,w_3x_3,w_4x_4,\ldots) \in X.
\]
The following assertions are equivalent:
\begin{itemize}
\item [\rm   (i)] $B_w$ has the positive shadowing property;
\item [\rm  (ii)] $B_w$ has the positive periodic shadowing property;
\item [\rm (iii)] $B_w$ is generalized hyperbolic;
\item [\rm  (iv)] One of the following conditions holds:
      \begin{itemize}
      \item [\rm (a)] $\displaystyle \lim_{n \to \infty} \sup_{k \in \N} |w_k w_{k+1} \cdots w_{k+n}|^\frac{1}{n} < 1$;
      \item [\rm (b)] $\displaystyle \lim_{n \to \infty} \inf_{k \in \N} |w_k w_{k+1} \cdots w_{k+n}|^\frac{1}{n} > 1$.
      \end{itemize}
\end{itemize}
\end{theorem}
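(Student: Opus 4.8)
The plan is to prove the cycle (iv) $\Rightarrow$ (iii) $\Rightarrow$ (i) $\Rightarrow$ (iv) together with (iii) $\Rightarrow$ (ii) $\Rightarrow$ (iv), so that all four statements become equivalent. Two of these implications are immediate: (iii) $\Rightarrow$ (ii) is a direct application of Theorem~\ref{PS}, since $\ell_p(\N)$ and $c_0(\N)$ are Banach spaces, and (iii) $\Rightarrow$ (i) is the known fact that generalized hyperbolic operators on Banach spaces have the positive shadowing property (see \cite{BerCirDarMesPuj18,NBerAMesArxiv}). What remains is to establish (iv) $\Rightarrow$ (iii) and the two contrapositive implications (i) $\Rightarrow$ (iv) and (ii) $\Rightarrow$ (iv).

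For (iv) $\Rightarrow$ (iii) I would read conditions (a) and (b) structurally. On $\ell_p(\N)$ and on $c_0(\N)$ one has $\norm{B_w^n} = \sup_{k \in \N}|w_{k+1}w_{k+2}\cdots w_{k+n}|$, so the limit appearing in (a) is exactly the spectral radius $r(B_w)$; thus (a) says $r(B_w)<1$, which makes $B_w$ hyperbolic, hence generalized hyperbolic with $M\!:=X$ and $N\!:=\{0\}$. If instead (b) holds, then boundedness of $w$ forces $\inf_k|w_k|>0$ (fix $n$ with $\inf_k|w_k\cdots w_{k+n}|>1$ and use $|w_k|\ge 1/(\sup_j|w_j|)^n$), so $B_w$ restricted to $N\!:=\ov{\spa}\{e_n:n\ge 2\}$ is an isomorphism onto $X$; taking $M\!:=\spa\{e_1\}$ (which absorbs $\ker B_w$) one checks the defining properties of generalized hyperbolicity, the only nontrivial point being that $(B_w|_N)^{-1}|_N$ — the weighted forward shift $e_n\mapsto w_{n+1}^{-1}e_{n+1}$ on $N$ — has spectral radius $\big(\lim_n\inf_k|w_k\cdots w_{k+n}|^{1/n}\big)^{-1}<1$ by (b).

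For (i) $\Rightarrow$ (iv) and (ii) $\Rightarrow$ (iv) I would argue by contraposition: assume that \emph{both} (a) and (b) fail. Since $n\mapsto\sup_k|w_{k+1}\cdots w_{k+n}|$ is submultiplicative and $n\mapsto\inf_k|w_{k+1}\cdots w_{k+n}|$ is supermultiplicative, Fekete's lemma turns the failure of (a) into the statement that for every $n\in\N$ there is a block of $n$ consecutive weights whose product has modulus $\ge 1$, and the failure of (b) into: for every $n\in\N$ there is a block of $n$ consecutive weights whose product has modulus $\le 1$. Using these two families of blocks, for any $\delta>0$ I would build a $\delta$-pseudotrajectory $(x^{(j)})_{j\ge 0}$ as follows: inject perturbations of size $\approx\delta$ at a moving coordinate running along a ``product $\ge 1$'' block (so that the injected mass is not damped by the dynamics), letting the shift carry the previously injected mass through ``product $\le 1$'' blocks (so that it neither blows up nor can be absorbed). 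Repeating this pattern, one gets a $\delta$-pseudotrajectory for which any $x$ satisfying $\norm{B_w^jx-x^{(j)}}<\eps_0$ for all $j$ (with $\eps_0$ a fixed constant independent of $\delta$) would be forced to have infinitely many coordinates of modulus bounded below by a fixed positive number — impossible for $x\in\ell_p(\N)$ and contradicting $x\in c_0(\N)$. This contradicts the positive shadowing property. For the positive periodic shadowing property one repeats a suitable finite portion of the pattern to obtain a \emph{periodic} $\delta$-pseudotrajectory, and one checks directly that it is not $\eps_0$-shadowed by any periodic trajectory of $B_w$ (equivalently, by the orbit of any periodic point).

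The main obstacle is precisely this last construction. The implications (iv) $\Rightarrow$ (iii), (iii) $\Rightarrow$ (ii) and (iii) $\Rightarrow$ (i) are routine once the structural picture is in place, but producing an explicit non-shadowable pseudotrajectory in the \emph{mixed} regime — where the products of blocks of weights are neither uniformly contracting nor uniformly expanding — requires care, and making it periodic without destroying non-shadowability is subtler still, since on $\ell_p(\N)$ the shift $B_w$ can have nontrivial periodic points; carrying the $\ell_p$-summability and the $c_0$-vanishing conditions simultaneously adds some bookkeeping.
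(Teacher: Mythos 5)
Your handling of (iv) $\Rightarrow$ (iii), (iii) $\Rightarrow$ (i) and (iii) $\Rightarrow$ (ii) is correct and matches the paper, which simply cites the known equivalence of (i), (iii) and (iv) and obtains (iii) $\Rightarrow$ (ii) from Theorem~\ref{PS}. The only implication that requires new work is (ii) $\Rightarrow$ (iv), and there your plan has a genuine gap. The paper does \emph{not} assume that both (a) and (b) fail and then exhibit a non-shadowable periodic pseudotrajectory; it assumes (ii) together with the failure of (a) and \emph{derives} (b). The failure of (a) is used through Lemma~\ref{Unilateral1} in the quantitative form $\sup_k\sum_n|w_k\cdots w_{k+n}|=\infty$. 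Your Fekete reformulation (``for every $n$ there is a block of length $n$ whose product has modulus $\ge 1$'') is a priori much weaker: for bounded weights $|w_j|\le W$ such a block only forces $\sum_{j=1}^{n}|w_{k+1}\cdots w_{k+j}|\ge\sum_{i=0}^{n-1}W^{-i}$, which stays bounded, so it does not by itself let you accumulate arbitrarily large mass at a single coordinate. The paper instead locates $k$ and $m_0$ with $\sum_{n=0}^{m_0}|w_k\cdots w_{k+n}|\ge(1+\delta)/\delta^2$, injects $\delta$-perturbations along that block, closes the chain into a periodic $\delta$-pseudotrajectory (the injected mass dies at the left end after finitely many steps, so the block can be repeated), and then reads off from the periodic shadowing point $a$ at times $0$ and $m+1$ a uniform bound on the sums of reciprocals of the partial products; Lemma~\ref{Unilateral1} applied to $1/w$ then yields (b). No contradiction is ever reached and the failure of (b) plays no role in the construction.

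Your proposed construction, by contrast, runs into a concrete obstruction. For the \emph{unilateral backward} shift, mass injected at coordinate $k+m$ travels leftward through the fixed window $w_{k+m},\dots,w_2$ and is annihilated once it passes coordinate $1$; it cannot be ``carried through'' the product-$\le1$ blocks supplied by the failure of (b), because those blocks sit at locations unrelated to the window the mass actually traverses. Likewise, the coordinates of a would-be shadowing vector $x$ that a periodic version of the construction constrains are $x_{k+m+jP}$ ($P$ the period), with lower bounds of the form $(\text{const})/|w_k\cdots w_{k+m+jP}|$; these are bounded away from $0$ only if the weight products along that particular arithmetic progression stay bounded, and the failure of (b) --- an $\inf_k$ statement in which the minimizing $k$ depends on $n$ --- gives you no control over that progression. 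So the conclusion that any $\eps_0$-shadowing vector has infinitely many coordinates bounded below does not follow from your hypotheses as stated, and the contradiction you aim for may simply not materialize. The repair is to abandon the contrapositive-by-contradiction format and run the direct derivation of (b) from (ii) and the negation of (a), as the paper does.
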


The equivalences (i) $\Leftrightarrow$ (iii) $\Leftrightarrow$ (iv) can be found in \cite{NBerAMesArxiv}.
Our goal here is to include (ii) among these equivalences.
For this purpose, we will adapt an argument used in the proof of \cite[Theorem~18]{NBerAMes21},
but taking care to construct a $\delta$-pseudotrajectory which is periodic.

\begin{proof}
By Theorem~\ref{PS}, (iii) $\Rightarrow$ (ii). Suppose that (ii) holds and let us prove that (iv) must be true.
We assume that (a) is false and prove that (b) holds. Let $\delta > 0$ be associated to $\eps\!:= 1$ in the definition of positive periodic
shadowing. By Lemma~\ref{Unilateral1}, there are integers $k \geq 2$ and $m_0 \geq 1$ such that
\begin{equation}
\sum_{n=0}^{m_0} |w_k w_{k+1} \cdots w_{k+n}| \geq \frac{1+\delta}{\delta^2}\cdot \label{C1b}
\end{equation}
Fix $m > m_0$ and let $\theta_j \in \R$ satisfy
\[
e^{i\theta_j} w_k w_{k+1} \cdots w_{k+m-j} = |w_k w_{k+1} \cdots w_{k+m-j}| \ \ \ \ (0 \leq j \leq m).
\]
Define
\begin{align*}
x_0 &\!:= \delta e^{i\theta_0} e_{k+m},\\
x_1 &\!:= B_w(x_0) + \delta e^{i\theta_1} e_{k+m-1},\\
x_2 &\!:= B_w(x_1) + \delta e^{i\theta_2} e_{k+m-2},\\
\ & \ \ \vdots \ \\
x_m &\!:= B_w(x_{m-1}) + \delta e^{i\theta_m} e_k,\\
x_{m+1} &\!:= B_w(x_m),\\
x_{m+2} &\!:= B_w(x_{m+1}),\\
\ & \ \ \vdots \ \\
x_{m+k} &\!:= B_w(x_{m+k-1}) = 0.
\end{align*}
Since
\[
(x_n)_{n \in \N_0}\!:= (x_0,\ldots,x_{m+k},x_0,\ldots,x_{m+k},x_0,\ldots,x_{m+k},\ldots)
\]
is a periodic $\delta$-pseudotrajectory of $T$, there exists $a\!:= (a_n)_{n \in \N} \in X$ such that
\begin{equation}
\|x_n - B_w^n(a)\| < 1 \ \ \text{ for all } n \in \N_0. \label{C2b}
\end{equation}
Write $a_{k+m} = \delta e^{i\theta_0} + \gamma$ with $|\gamma| < 1$. Since the $(k-1)^\text{th}$ coordinate of $x_{m+1}$ is equal to
\[
\big(|w_k w_{k+1} \cdots w_{k+m}| + |w_k w_{k+1} \cdots w_{k+m-1}| + \cdots + |w_k w_{k+1}| + |w_k|\big)\delta
\]
and the $(k-1)^\text{th}$ coordinate of $B_w^{m+1}(a)$ is $w_k w_{k+1} \cdots w_{k+m} (\delta e^{i\theta_0} + \gamma)$,
(\ref{C2b}) gives
\begin{equation}
\big|\big(|w_k w_{k+1} \cdots w_{k+m-1}| + \cdots + |w_k w_{k+1}| + |w_k|\big) \delta
 - w_k w_{k+1} \cdots w_{k+m}\gamma\big| < 1.\label{C3b}
\end{equation}
By (\ref{C1b}) and (\ref{C3b}), $|w_k w_{k+1} \cdots w_{k+m}| > 1/\delta$.
Hence, by dividing both sides of (\ref{C3b}) by $|w_k w_{k+1} \cdots w_{k+m}|\delta$, we get
\begin{equation}
\frac{1}{|w_{k+m}|} + \frac{1}{|w_{k+m}w_{k+m-1}|} + \cdots +
\frac{1}{|w_{k+m}w_{k+m-1} \cdots w_{k+1}|} < 1 + \frac{1}{\delta}\,\cdot \label{C4b}
\end{equation}
Since this holds for every $m > m_0$, we have that $\inf_{n \in \N} |w_n| > 0$. Let
\[
v_n\!:= w_n^{-1} \ (n \in \N), \ \ t\!:= k+m \ \ \text{ and } \ \ C\!:= \sum_{n=0}^{k-1} |v_k v_{k-1} \cdots v_{k-n}|.
\]
By (\ref{C4b}),
\begin{align*}
\sum_{n=0}^{t-1} |v_t v_{t-1} \cdots v_{t-n}|
  &= \sum_{n=0}^{m-1} |v_t v_{t-1} \cdots v_{t-n}| + \sum_{n=m}^{t-1} |v_t v_{t-1} \cdots v_{t-n}|\\
  &< \Big(1 + \frac{1}{\delta}\Big) + |v_t v_{t-1} \cdots v_{t-m+1}| \sum_{n=0}^{k-1} |v_k v_{k-1} \cdots v_{k-n}|\\
  &< \Big(1 + \frac{1}{\delta}\Big) \Big(1 + C\Big).
\end{align*}
Since this holds for all $t > k + m_0$, Lemma~\ref{Unilateral1} gives
\[
\lim_{n \to \infty} \sup_{t \in \N} |v_t v_{t+1} \cdots v_{t+n}|^\frac{1}{n} < 1,
\]
which is equivalent to (b).
\end{proof}

On the other hand, we will see below that the notions of shadowing and periodic shadowing do not coincide in general
for invertible operators on Banach spaces. For this purpose, we will need the result below, which gives us another class
of operators that have the periodic shadowing property. We denote by $S_X$ the unit sphere of the Banach space $X$.

\begin{theorem}\label{PeriodicThm}
Suppose that $T$ be an invertible continuous linear operator on a Banach space $X$ for which there is a direct sum decomposition
\[
X = M \oplus N,
\]
where $M$ and $N$ are closed subspaces of $X$ with $T(M) \subset M$ and $T^{-1}(N) \subset N$ such that
both $T|_M$ and $T^{-1}|_N$ are uniformly positively expansive, i.e., there are $n,m \in \N$ such that
\begin{equation}\label{Periodic1}
\|T^n y\| \geq 2 \ \text{ and } \ \|T^{-m}z\| \geq 2 \ \text{ for all } y \in S_M \text{ and } z \in S_N.
\end{equation}
For each $x \in X$ and each $k \in \Z$, let $x = x^{1,k} + x^{2,k}$ be the unique decomposition of $x$ with
$x^{1,k} \in T^k(M)$ and $x^{2,k} \in T^k(N)$. Suppose also that there is a constant $c > 0$ such that
\begin{equation}\label{Periodic2}
\|x^{1,k}\| \leq c \|x\| \ \text{ and } \ \|x^{2,k}\| \leq c \|x\| \ \text{ for all } x \in X \text{ and } k \in \Z.
\end{equation}
Then $T$ has the periodic shadowing property.
\end{theorem}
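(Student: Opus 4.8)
The plan is to mimic the proof of Theorem~\ref{PS}: produce the periodic shadowing vector by an explicit series, but now with the roles of $M$ and $N$ --- and of forward and backward time --- interchanged, and working throughout with the shifted decompositions $X=T^k(M)\oplus T^k(N)$ instead of the fixed one $X=M\oplus N$ (this is forced because here $M$ is only forward invariant and $N$ only backward invariant, so there is no honest self‑map of $N$ playing the role that $S=(T|_N)^{-1}|_N$ played in Theorem~\ref{PS}). As in every such argument the problem reduces to a linear error recurrence. Given $\eps>0$, choose $\delta$ small (proportional to $\eps$, the constant to be fixed at the end), let $(x_j)_{j\in\Z}$ be a periodic $\delta$‑pseudotrajectory of period $p$, and put $y_j:=x_{j+1}-Tx_j$, so that $(y_j)_{j\in\Z}$ is $p$‑periodic with $\|y_j\|\le\delta$. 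It suffices to construct a $p$‑periodic sequence $(e_j)_{j\in\Z}$ with $e_{j+1}=Te_j-y_j$ and $\sup_j\|e_j\|<\eps$, because then $x:=x_0+e_0$ satisfies $T^jx-x_j=e_j$ for all $j\in\Z$ and $T^px=x$, so its (periodic) trajectory $\eps$‑shadows $(x_j)_{j\in\Z}$; equivalently, one seeks a periodic point $x$ whose orbit stays within $\eps$ of the pseudotrajectory.

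Next I would extract the uniform exponential estimates hidden in \eqref{Periodic1}. Iterating $\|T^ny\|\ge 2\|y\|$ on $S_M$ and filling the residual fewer‑than‑$n$ steps with $\|T^{-1}\|$ yields constants $c_0>0$ and $\lambda>1$ with $\|T^iy\|\ge c_0\lambda^i\|y\|$ for all $y\in M$ and $i\ge0$; symmetrically, from $T^{-1}|_N$ one gets $c_0'>0$ and $\mu>1$ with $\|T^{-i}z\|\ge c_0'\mu^i\|z\|$ for all $z\in N$ and $i\ge0$. Transporting these by $T^k$ and combining with the uniform projection bound $c$ of \eqref{Periodic2}, they say, roughly, that $T$ contracts the $T^k(N)$‑component of a vector going forward and $T^{-1}$ contracts its $T^k(M)$‑component going backward, at rates that are uniform in $k$ (over the range of exponents that actually occurs in the construction).

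With these estimates in hand I would write $e_0$ (equivalently $x$) as a series of the same shape as the one in Theorem~\ref{PS}: the $T^k(N)$‑parts of the errors $y_j$ are absorbed by a forward sum $\sum_{i\ge 1}$ of positive powers of $T$ (convergent by the contraction on the $N$‑side), while the $T^k(M)$‑parts are absorbed by a backward double sum $-\sum_{j=0}^{p-1}\sum_{k\ge 0}T^{-(kp+j)}(\cdots)$ (convergent by the contraction of $T^{-1}$ on the $M$‑side), the outer sum over multiples of $p$ being precisely what makes the answer $p$‑periodic --- exactly as the factor $\sum_k T^{kp}$ did in Theorem~\ref{PS}, but now for $T^{-1}$. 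A telescoping/induction identical in spirit to the one carried out in the proof of Theorem~\ref{PS} then verifies that the resulting $(e_j)$ is $p$‑periodic, satisfies $e_{j+1}=Te_j-y_j$, and obeys $\|e_j\|<\eps$ once $\delta$ has been taken small enough in terms of $c,c_0,c_0',\lambda,\mu,\|T\|$ and $\|T^{-1}\|$.

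I expect the main obstacle to be the bookkeeping with the shifted decompositions: since $M$ and $N$ are only one‑sided invariant, each application of $T$ moves a component from $T^k(M)$ to $T^{k+1}(M)$ and each application of $T^{-1}$ moves it to $T^{k-1}(M)$ (and likewise for $N$), so the series unavoidably contains cross terms mixing the $M$‑part and the $N$‑part, which must be shown to be summable; the delicate point is to check that in every term the exponent stays within the range where the corresponding exponential estimate of the second step is valid, and this is exactly where $p$‑periodicity is essential, because it lets the exponents wrap around modulo $p$ rather than run off to $\mp\infty$ on the wrong side. A minor extra point is the case of small periods ($p<n$ or $p<m$), where one does not yet see contraction after a single period; this is harmless, since the constants $c_0,c_0'$ already absorb the residual sub‑$n$ and sub‑$m$ steps, so no separate argument is needed.
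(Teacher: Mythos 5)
Your plan fails at its core step, because the hypotheses here are not a rearrangement of generalized hyperbolicity that still leaves contracting directions --- they are its opposite. In Theorem~\ref{PS} the series converge because $T$ contracts $M$ forward and $(T|_N)^{-1}$ contracts $N$ backward; here $T$ \emph{expands} $M$ forward and $T^{-1}$ \emph{expands} $N$ backward, and nothing contracts. The ``contraction on the $N$-side'' that you invoke to make the forward sum $\sum_{i\ge 1}T^i(\cdots)$ converge is simply not available: condition \eqref{Periodic1} controls only $T^{-m}$ on $N$, hence yields $\|T^m u\|\le\frac12\|u\|$ only for $u$ in the (possibly proper) subspace $T^{-m}(N)$ of $N$, while for a general $z\in N$ the forward iterates $T^iz$ leave $N$ altogether (only $T^{-1}(N)\subset N$ is assumed) and are completely uncontrolled. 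In the bilateral shift of Corollary~\ref{BWBSNot}, with $N$ the span of the coordinates $e_n$, $n\ge 0$, one has $B_w^i e_0=w_{-i+1}\cdots w_0\,e_{-i}$ with norm tending to infinity, so your forward sum diverges; the backward double sum over the $M$-parts fails symmetrically. The decisive obstruction is that Corollary~\ref{BWBSNot} exhibits operators satisfying every hypothesis of this theorem that do \emph{not} have the shadowing property; since in your construction periodicity is used only to make the answer $p$-periodic and not to make the sums converge, a convergent series of this shape would produce a shadowing point for arbitrary pseudotrajectories and hence prove full shadowing, which is false for these operators. No repair of the bookkeeping with the shifted decompositions can get around this.

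Periodicity must instead be used for the \emph{estimates}, and the actual proof constructs nothing: it shows that the only candidate shadowing orbit is the zero orbit, and that it works. After reducing to $n=m=1$ (as in Proposition~\ref{PowerShad}) and setting $\delta:=\eps/(12c)$, one proves $\|x_j^{2,0}\|<\eps/2$ for all $j$ by contradiction: if $\|x_\ell^{2,0}\|\ge\eps/2$, then since $x_{\ell-k-1}^{2,-k-1}\in T^{-k-1}(N)\subset T^{-1}(N)$ one has $\|Tx_{\ell-k-1}^{2,-k-1}\|\le\frac12\|x_{\ell-k-1}^{2,-k-1}\|$, and combining this with $\|Tx_{\ell-k-1}^{2,-k-1}-x_{\ell-k}^{2,-k}\|\le c\delta$ (from \eqref{Periodic2}) shows by induction that $\|x_{\ell-k}^{2,-k}\|$ grows geometrically in $k$; this contradicts the boundedness of $(x_j)_{j\in\Z}$, which is exactly what periodicity supplies. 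The symmetric argument forward in time bounds $\|x_j^{1,0}\|$, so $\|x_j\|<\eps$ for all $j$ and the zero trajectory $\eps$-shadows the pseudotrajectory.
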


\begin{proof}
Without loss of generality, we may assume $m = n$ in (\ref{Periodic1}).
By arguing as in the proof of Proposition~\ref{PowerShad} in the Appendix,
we see that $T$ has the periodic shadowing property if and only if so does $T^n$.
Therefore, we may assume $n = 1$. Fix $\eps > 0$ and let $\delta\!:= \frac{\eps}{12c} > 0$.
Let $(x_j)_{j \in \Z}$ be a periodic $\delta$-pseudotrajectory of $T$. We claim that
\begin{equation}\label{Periodic3}
\|x_j^{2,0}\| < \frac{\eps}{2} \ \ \text{ for all } j \in \Z.
\end{equation}
Indeed, suppose that this is false and choose $\ell \in \Z$ such that
\begin{equation}\label{Periodic4}
\|x_\ell^{2,0}\| \geq \frac{\eps}{2}\,\cdot
\end{equation}
We shall prove by induction that
\begin{equation}\label{Periodic5}
\|x_{\ell-k}^{2,-k}\| \geq \frac{2^k \eps}{3} + \frac{\eps}{6} \ \ \text{ for all } k \in \N_0.
\end{equation}
The case $k = 0$ is exactly (\ref{Periodic4}). Assume that (\ref{Periodic5}) holds for a certain $k \in \N_0$. Since
\[
\|T x_{\ell-k-1}^{2,-k-1} - x_{\ell-k}^{2,-k}\| = \|(T x_{\ell-k-1})^{2,-k} - x_{\ell-k}^{2,-k}\|
  \leq c\, \|T x_{\ell-k-1} - x_{\ell-k}\| \leq c\, \delta,
\]
we obtain
\[
\frac{2^k \eps}{3} + \frac{\eps}{6} - c\,\delta \leq \|x_{\ell-k}^{2,-k}\| - c\,\delta \leq \|T x_{\ell-k-1}^{2,-k-1}\|
  \leq \frac{1}{2} \|x_{\ell-k-1}^{2,-k-1}\|,
\]
and so
\[
\|x_{\ell-k-1}^{2,-k-1}\| \geq \frac{2^{k+1} \eps}{3} + \frac{\eps}{3} - 2c\,\delta = \frac{2^{k+1} \eps}{3} + \frac{\eps}{6}\,\cdot
\]
Hence, (\ref{Periodic5}) holds with $k+1$ instead of $k$. By (\ref{Periodic2}) and (\ref{Periodic5}),
\[
\|x_{\ell-k}\| \geq \frac{1}{c}\, \|x_{\ell-k}^{2,-k}\| \geq \frac{1}{c} \Big(\frac{2^k \eps}{3} + \frac{\eps}{6}\Big) \ \
  \text{ for all } k \in \N_0.
\]
Since the sequence $(x_j)_{j \in \Z}$ is periodic, we have a contradiction. This proves that (\ref{Periodic3}) holds.
A similar argument shows that
\begin{equation}\label{Periodic6}
\|x_j^{1,0}\| < \frac{\eps}{2} \ \ \text{ for all } j \in \Z.
\end{equation}
By (\ref{Periodic3}) and (\ref{Periodic6}), $\|x_j\| < \eps$ for all $j \in \Z$.
Hence, the periodic $\delta$-pseudotrajectory $(x_j)_{j \in \Z}$ is $\eps$-shadowed by the trajectory of the zero vector,
proving that $T$ has the periodic shadowing property.
\end{proof}

Let $X\!:= \ell_p(\Z)$ $(1 \leq p < \infty)$ or $X\!:= c_0(\Z)$. Let $w\!:= (w_n)_{n \in \Z}$ be a bounded sequence of scalars
with $\inf_{n \in \Z} |w_n| > 0$ and consider the bilateral weighted backward shift
\[
B_w : (x_n)_{n \in \Z} \in X \mapsto (w_{n+1}x_{n+1})_{n \in \Z} \in X.
\]
It was proved in \cite[Theorem~18]{NBerAMes21} that $B_w$ has the shadowing property if and only if
one of the following conditions holds:
\begin{itemize}
\item [\rm (A)] $\displaystyle \lim_{n \to \infty} \sup_{k \in \Z} |w_k \cdots w_{k+n}|^\frac{1}{n} < 1$.
\item [\rm (B)] $\displaystyle \lim_{n \to \infty} \inf_{k \in \Z} |w_k \cdots w_{k+n}|^\frac{1}{n} > 1$.
\item [\rm (C)] $\displaystyle \lim_{n \to \infty} \sup_{k \in \N} |w_{-k-n} \cdots w_{-k}|^\frac{1}{n} < 1$
                and
                $\displaystyle \lim_{n \to \infty} \inf_{k \in \N} |w_k \cdots w_{k+n}|^\frac{1}{n} > 1$.
\end{itemize}
Note that (A) and (B) are exactly the cases in which $B_w$ is hyperbolic.
In case (C), $B_w$ is not hyperbolic but it is generalized hyperbolic.
It follows immediately from the above result that:
\[
B_w \textit{ has the shadowing property if and only if it is generalized hyperbolic.}
\]
In view of Corollary~\ref{PSCor}, we conclude that:
\[
\textit{If } B_w \textit{ has the shadowing property, then it has the periodic shadowing property.}
\]
We shall now see that the converse of this fact is false.

\begin{corollary}\label{BWBSNot}
Let $X\!:= \ell_p(\Z)$ ($1 \leq p < \infty$) or $X\!:= c_0(\Z)$. Let $w\!:= (w_n)_{n \in \Z}$ be a bounded sequence of scalars
with $\inf_{n \in \Z} |w_n| > 0$ and consider the bilateral weighted backward shift
\[
B_w : (x_n)_{n \in \Z} \in X \mapsto (w_{n+1}x_{n+1})_{n \in \Z} \in X.
\]
If
\begin{equation}\label{BWBSNotEq}
\lim_{n \to \infty} \inf_{k \in \N} |w_{-k-n+1} \cdots w_{-k}| = \infty
\ \ \text{ and } \ \
\lim_{n \to \infty} \sup_{k \in \N} |w_k \cdots w_{k+n-1}| = 0,
\end{equation}
then $B_w$ has the periodic shadowing property but does not have the shadowing property.
\end{corollary}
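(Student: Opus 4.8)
The plan is to obtain the positive conclusion from Theorem~\ref{PeriodicThm} and the negative one from the trichotomy (A)--(C) above, which is \cite[Theorem~18]{NBerAMes21}. One should note at the outset that $B_w$ is indeed an invertible operator on $X$: the weights $w_n$ are nonzero, so $B_w$ is bijective, and $\inf_n |w_n| > 0$ together with boundedness of $(w_n)$ makes both $B_w$ and $B_w^{-1}$ bounded on $\ell_p(\Z)$ and $c_0(\Z)$.

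For the periodic shadowing property I would apply Theorem~\ref{PeriodicThm} to the ``natural'' decomposition $X = M \oplus N$ with $M\!:= \ov{\spa\{e_j : j \le -1\}}$ and $N\!:= \ov{\spa\{e_j : j \ge 0\}}$. Since $B_w e_j = w_j e_{j-1}$ and $B_w^{-1} e_j = w_{j+1}^{-1} e_{j+1}$ with all weights nonzero, one has $B_w(M)\subset M$ and $B_w^{-1}(N)\subset N$. For $y = \sum_{j\le -1} y_j e_j \in M$ one computes $\|B_w^n y\| = \big(\sum_{j\le -1} |y_j|^p\,|w_{j-n+1}\cdots w_j|^p\big)^{1/p}$ (with the obvious supremum in place of the sum in the $c_0$ case); writing $j = -k$, the first limit in \eqref{BWBSNotEq} produces an $n$ with $|w_{-k-n+1}\cdots w_{-k}| \ge 2$ for every $k \in \N$, hence $\|B_w^n y\| \ge 2\|y\|$ on $M$. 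Symmetrically, $\|B_w^{-m}z\| = \big(\sum_{j\ge 0} |z_j|^p\,|w_{j+1}\cdots w_{j+m}|^{-p}\big)^{1/p}$ for $z = \sum_{j\ge 0} z_j e_j \in N$, and the second limit in \eqref{BWBSNotEq} produces an $m$ with $|w_k\cdots w_{k+m-1}| \le 1/2$ for every $k \in \N$, so $\|B_w^{-m}z\| \ge 2\|z\|$ on $N$; this establishes \eqref{Periodic1}. Finally, since $B_w^k$ carries each $e_j$ to a nonzero multiple of $e_{j-k}$, the subspaces $T^k(M) = \ov{\spa\{e_i : i \le -1-k\}}$ and $T^k(N) = \ov{\spa\{e_i : i \ge -k\}}$ are again complementary ``tail'' coordinate subspaces, so the corresponding projections are restrictions of coordinate projections on $\ell_p(\Z)$ or $c_0(\Z)$ and therefore have norm $1$; thus \eqref{Periodic2} holds with $c = 1$. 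Theorem~\ref{PeriodicThm} then yields that $B_w$ has the periodic shadowing property.

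To see that $B_w$ does not have the shadowing property, I would check that \eqref{BWBSNotEq} rules out each of (A), (B), (C) from \cite[Theorem~18]{NBerAMes21}. Taking $k = -n$ gives $\sup_{k\in\Z}|w_k\cdots w_{k+n}| \ge (\inf_j|w_j|)\,|w_{-n}\cdots w_{-1}|$, and $|w_{-n}\cdots w_{-1}|$ is the $k = 1$ instance of the first product in \eqref{BWBSNotEq}, hence tends to $\infty$; this is incompatible with (A). Taking $k = 1$ gives $\inf_{k\in\Z}|w_k\cdots w_{k+n}| \le (\sup_j|w_j|)\,|w_1\cdots w_n|$, and $|w_1\cdots w_n|$ is the $k = 1$ instance of the second product in \eqref{BWBSNotEq}, hence tends to $0$; this is incompatible with (B). Finally $\sup_{k\in\N}|w_{-k-n}\cdots w_{-k}| \ge (\inf_j|w_j|)\,|w_{-n}\cdots w_{-1}| \to \infty$, which is incompatible with the first half of (C). Hence none of (A), (B), (C) holds, and by \cite[Theorem~18]{NBerAMes21}, $B_w$ fails to have the shadowing property, completing the proof.

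The argument is short once the pieces are assembled; the only point requiring attention is the index bookkeeping, namely matching (up to a single bounded factor and a relabelling $k \leftrightarrow -k$ or $k \leftrightarrow k+1$) the products of weights appearing in the expansivity conditions \eqref{Periodic1} and in the trichotomy (A)--(C) with the two products displayed in \eqref{BWBSNotEq}, together with the observation that the subspaces $T^k(M)$ and $T^k(N)$ remain coordinate subspaces so that the projection bound \eqref{Periodic2} is automatic.
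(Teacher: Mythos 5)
Your proposal is correct and follows essentially the same route as the paper: the same decomposition $M = \ov{\spa\{e_j : j \le -1\}}$, $N = \ov{\spa\{e_j : j \ge 0\}}$ fed into Theorem~\ref{PeriodicThm} with $c=1$, and the trichotomy of \cite[Theorem~18]{NBerAMes21} for the failure of shadowing. You merely spell out the expansivity estimates and the verification that (A), (B), (C) each fail, details the paper leaves implicit.
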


As a concrete example, we can take a weight sequence of the form
\[
w\!:= (\ldots,a,a,a,a^{-1},a^{-1},a^{-1},\ldots), \ \ \text{ where } a > 1.
\]

\begin{proof}
By the above-mentioned result from \cite{NBerAMes21}, $B_w$ does not have the shadowing property.
On the other hand, let
\[
M\!:= \{(x_n)_{n \in \Z} \in X : x_n = 0 \text{ for all } n \geq 0\}
\]
and
\[
N\!:= \{(x_n)_{n \in \Z} \in X : x_n = 0 \text{ for all } n < 0\},
\]
which are closed subspaces of $X$ such that $X = M \oplus N$, $B_w(M) \subset M$ and $B_w^{-1}(N) \subset N$.
The conditions in (\ref{BWBSNotEq}) imply that both $B_w|_M$  and $B_w^{-1}|_N$ are uniformly positively expansive.
Since condition (\ref{Periodic2}) holds with $c\!:= 1$, Theorem~\ref{PeriodicThm} guarantees that
$B_w$ has the periodic shadowing property.
\end{proof}


\section*{Appendix: Generalities on shadowing and chain recurrence for operators}\label{Generalities}

Throughout this appendix, $X$ denotes an arbitrary topological vector space over $\K$, unless otherwise specified.
We emphasize that $X$ is not assumed to be a Hausdorff space.
Recall that a set $A \subset X$ is {\em balanced} if $\lambda A \subset A$ whenever $|\lambda| \leq 1$.
We denote by $\cV_X$ the set of all balanced neighborhoods of $0$ in $X$.
It is well known that every neighborhood of $0$ in $X$ contains an element of $\cV_X$.
We denote by $L(X)$ the set of all continuous linear operators on $X$ and by
$GL(X)$ be the set of those operators that have a continuous inverse.

Since $X$ has a canonical underlying uniform structure, the notion of pseudotrajectory in the uniform space setting given in
Section~\ref{Chaos} can be rewritten as follows in the present context:
Given a neighborhood $V$ of $0$ in $X$, a {\em $V$-pseudotrajectory} of a map $f : X \to X$ is a finite or infinite sequence
$(x_j)_{i < j < k}$ in $X$ such that
\[
f(x_j) - x_{j+1} \in V \ \ \text{ for all } i < j < k-1.
\]
Finite $V$-pseudotrajectories are also called {\em $V$-chains}.
With these notions at hand, the concepts of positive shadowing, shadowing, finite shadowing, chain recurrence,
chain transitivity and chain mixing are defined in the obvious way.
We observe that if $X$ is metrizable and we endow $X$ with a compatible invariant metric,
then these notions coincide with the corresponding ones in the metric space setting.
Clearly, it is always true that
\[
\text{chain mixing} \ \ \ \Rightarrow \ \ \ \text{chain transitivity} \ \ \ \Rightarrow \ \ \ \text{chain recurrence}.
\]
The fact that these notions always coincide in the linear setting was observed in \cite{AlvBerMes21} (see also \cite{AntManVar22}):

\begin{proposition}\label{CR-CT-CM}
For any linear operator $T : X \to X$ (not necessarily continuous), the following assertions are equivalent:
\begin{itemize}
\item [\rm   (i)] $T$ is chain recurrent;
\item [\rm  (ii)] $T$ is chain transitive;
\item [\rm (iii)] $T$ is chain mixing.
\end{itemize}
\end{proposition}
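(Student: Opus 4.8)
The plan is to prove the cycle of implications $(i) \Rightarrow (ii) \Rightarrow (iii)$, since $(iii) \Rightarrow (i)$ is trivial. The crucial structural fact that makes everything work is \emph{linearity combined with the translation-invariance of the uniform structure}: if $(x_j)_{j=0}^k$ is a $V$-chain for $T$ from $a$ to $b$ and $(x_j')_{j=0}^\ell$ is a $V$-chain from $a'$ to $b'$, then $(x_j + x_j')$ (suitably aligned) behaves well, and—more importantly—scalar multiples and sums of $V$-chains that start and end at $0$ are again $V$-chains (up to passing to a smaller balanced neighborhood $W$ with $W + W \subset V$). So the heart of the matter is to understand chains from $0$ to $0$, or equivalently the set $CR(T)$ and the sets $I_0(T)$, $O_0(T)$ appearing earlier.

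First I would prove $(i) \Rightarrow (ii)$. Assume $T$ is chain recurrent, i.e.\ $0 \in CR(T)$ and in fact every point is chain recurrent. Fix $x, y \in X$ and a balanced neighborhood $V$ of $0$; choose balanced $W$ with $W + W + W \subset V$ (using continuity of addition). The key observation is that for a linear operator, a $W$-chain from $x$ to $x$ yields, by subtracting the trajectory segment, a $W$-chain from $0$ to $0$ of the same length, and conversely one can \emph{shift} chains: if there is a $W$-chain from $x$ to $x$ then $T^n x - x$ lies in a controlled neighborhood, which lets one connect $x$ to $0$. Concretely, I expect the argument to run: chain recurrence of the point $x - y$ (or of $0$, exploiting $0 \in CR(T)$ always for linear $T$) produces a $W$-chain from $x-y$ to $x-y$; adding the trajectory $(T^j y)$ to an appropriate truncation gives a $W$-chain from $x$ to something close to $y$; then a short correction step (prepending or appending one term, absorbing the error into $W$) yields a genuine $V$-chain from $x$ to $y$. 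This is essentially the standard "chain recurrence of $0$ plus linearity forces chain transitivity" argument; the referenced works \cite{AlvBerMes21,AntManVar22} carry it out and I would follow that template.

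Next, $(ii) \Rightarrow (iii)$: given chain transitivity, I must produce, for each pair $x,y$ and each neighborhood $V$, a threshold $k_0$ such that $V$-chains from $x$ to $y$ of \emph{every} length $k \geq k_0$ exist. The mechanism is a \emph{length-adjustment trick}: since $0 \in CR(T)$, for a suitably small balanced $W$ there is a $W$-chain from $0$ to $0$ of some length $\ell \geq 1$; concatenating this loop with itself repeatedly, and inserting it (after translating) into a given chain from $x$ to $y$, lets one increase the length by multiples of $\ell$. To fill in the gaps between consecutive multiples of $\ell$, one uses that there are $W$-loops at $0$ of lengths $\ell, \ell+1, \ldots$—or, more cleanly, one first builds $W$-chains from $x$ to $y$ of $\ell$ consecutive lengths $k_0, k_0+1, \dots, k_0+\ell-1$ and then pads each with copies of the length-$\ell$ loop. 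Linearity is what guarantees the translated loop $(T^j \cdot 0\text{-loop}) + (\text{point of the chain})$ is still a legitimate $W$-chain, and the choice $W + W \subset V$ absorbs the one place where two chains are spliced.

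The main obstacle I anticipate is the bookkeeping around the \emph{non-Hausdorff, merely topological-vector-space} setting: one cannot use a metric, so every "closeness" estimate must be phrased via a descending chain of balanced neighborhoods $V \supset W \supset W' \supset \cdots$ with $W + W \subset V$ etc., and one must be careful that finitely many splicing operations only degrade $V$ by a controlled finite amount. A secondary subtlety is that $T$ is not assumed continuous, so one may not freely apply $T$ to neighborhoods—but this is handled by noting that $T$-images of chain points are what appear, never $T$-images of neighborhoods, and the defining condition $T(x_j) - x_{j+1} \in V$ only ever constrains differences of specific vectors. Once the neighborhood bookkeeping is set up, each implication reduces to the elementary "translate a $0$-loop and splice" idea, so I would keep the exposition brief and refer to \cite{AlvBerMes21,AntManVar22} for readers wanting the fully spelled-out estimates.
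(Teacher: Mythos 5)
Your reduction to $(i) \Rightarrow (ii) \Rightarrow (iii)$ is the right skeleton, and your step $(ii) \Rightarrow (iii)$ is essentially correct, though overbuilt: since $0$ is a fixed point of $T$, the constant sequence $(0,0,\ldots,0)$ is a $V$-chain of \emph{every} length, so splicing a chain from $x$ to $0$, a block of zeros of arbitrary length, and a chain from $0$ to $y$ already produces $V$-chains from $x$ to $y$ of every length $k \geq k_0$; the machinery of loops of $\ell$ consecutive lengths is unnecessary once you notice this.

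The genuine gap is in $(i) \Rightarrow (ii)$. If $(z_j)_{j=0}^{k}$ is a $W$-chain from $x-y$ to itself, then $(z_j + T^j y)_{j=0}^{k}$ is indeed a $W$-chain, but it runs from $x$ to $(x-y) + T^k y$, not to anything close to $y$: the discrepancy $(x-y) + (T^k y - y)$ is in no way controlled by $W$ and cannot be ``absorbed by a short correction step'' (try $T = -I$ on $\R$: the translated loop returns exactly to $x$). The ingredient you never invoke, and which is indispensable here, is that neighborhoods of $0$ are absorbing, i.e.\ that finite sets of vectors are bounded. The paper itself defers this proposition to \cite{AlvBerMes21,AntManVar22}, but the needed argument is carried out in the very next proposition of the Appendix (where $CR(T)$ is shown to be a single chain recurrent class): take $U$-loops $(x'_j)$ at $x$ and $(y'_j)$ at $y$ with $U+U+U+U \subset V$, repeat them to a common length $k$ chosen so large that $\frac{1}{k}$ times every chain point lies in $U$, and pass to the convex interpolation $z_j := \bigl(1 - \frac{j}{k}\bigr)x'_j + \frac{j}{k}\, y'_j$; the identity $Tz_j - z_{j+1} = \bigl(1-\frac{j}{k}\bigr)(Tx'_j - x'_{j+1}) + \frac{j}{k}(Ty'_j - y'_{j+1}) + \frac{1}{k}x'_{j+1} - \frac{1}{k}y'_{j+1}$ then shows $(z_j)_{j=0}^k$ is a $V$-chain from $x$ to $y$. (An equivalent alternative: connect any $x$ to $0$ by concatenating copies of a loop at $x$ scaled by $1 = t_0 > t_1 > \cdots > t_m = 0$ with each $(t_i - t_{i+1})x \in W$, which again uses absorbency and forces the chains to lengthen as $V$ shrinks.) Without one of these two devices your proof of $(i) \Rightarrow (ii)$ does not close.
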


Given $T : X \to X$ and $x,y \in X$, we write $x \cR y$ if for every $V \in \cV_X$, there exist $V$-chains for $T$ from $x$ to $y$
and from $y$ to $x$. With this notation, the chain recurrent set of $T$ can be written as $CR(T) = \{x \in X : x \cR x\}$. Restricted to
$CR(T)$, the relation $\cR$ is an equivalence relation and its equivalence classes are called the {\em chain recurrent classes} of $T$.

Chain recurrence is closely connected to the notion of recurrence, a property that is deserving special attention for linear dynamics
in recent years (see, e.g., \cite{CMP2014,YW2018,ABK2022,BGLP2022,CM2022,GL2023}).
A continuous linear operator $T : X \to X$ is said to be {\em recurrent} if, for every non-empty open set $U\subset X$,
there exists $k \in \N$ such that $T^k(U) \cap U \neq \emptyset$.
By a {\em recurrent vector} $x$ for $T$ we mean that, for any neighborhood $U$ of $x$, there exists $k\in\N$ with $T^kx\in U$,
and the set of recurrent vectors of $T$ is denoted by $\rec (T)$.  We easily have that any recurrent operator is chain recurrent.
Indeed, if $x \in X$ and $V \in \cV_X$, there exists $W \in \cV_X$ open with $W \subset V \cap T^{-1}(V)$.
We set $U\!:=x+W$ and find $k \in \N$ and $y \in U$ such that $T^ky \in U$.
Therefore, $(x,Ty,\dots ,T^{k-1}y,x)$ is a $V$-chain for $T$ from $x$ to itself.
Since $x$ and $V$ were arbitrary, we conclude that  $CR(T)=X$.

\begin{proposition}
For any linear operator $T : X \to X$ (not necessarily continuous), the set $CR(T)$ is a subspace of $X$ and is the unique
chain recurrent class of $T$.
\end{proposition}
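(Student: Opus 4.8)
The plan is to prove the two assertions in turn, relying throughout on the following elementary observations about chains in a topological vector space. First, \emph{(linearity)}: if $(a_j)_{j=0}^{k}$ is a $V$-chain for $T$ from $a$ to $b$ and $(a'_j)_{j=0}^{k}$ is a $V'$-chain for $T$ from $a'$ to $b'$ of the \emph{same} length, then $(a_j+a'_j)_{j=0}^{k}$ is a $(V+V')$-chain from $a+a'$ to $b+b'$, and, when $V\in\cV_X$, $(\lambda a_j)_{j=0}^{k}$ is a $\lambda V$-chain from $\lambda a$ to $\lambda b$ for every $\lambda\in\K$ (in particular a $V$-chain if $|\lambda|\le1$, since $V$ is balanced). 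Second, \emph{(concatenation)}: a $V$-chain for $T$ from $a$ to $a$ of length $k$ can be spliced with itself to produce a $V$-chain from $a$ to $a$ of length $Nk$ for every $N\in\N$, all of whose terms lie in the (finite) set of terms of the original chain. Note that only the linearity of $T$ will be used, never its continuity.

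To see that $CR(T)$ is a subspace: since $(0,0)$ is a $V$-chain from $0$ to $0$ for every $V\in\cV_X$, we have $0\in CR(T)$. If $x\in CR(T)$, $\lambda\in\K$ and $U\in\cV_X$, the case $\lambda=0$ is trivial, and if $\lambda\ne0$ we pick a balanced $V\in\cV_X$ with $\lambda V\subset U$ and a $V$-chain from $x$ to $x$, which upon scaling by $\lambda$ becomes a $U$-chain from $\lambda x$ to $\lambda x$; hence $\lambda x\in CR(T)$. If $x,y\in CR(T)$ and $U\in\cV_X$, choose a balanced $V\in\cV_X$ with $V+V\subset U$, take a $V$-chain from $x$ to $x$ of length $p$ and a $V$-chain from $y$ to $y$ of length $q$, concatenate them with themselves $q$ and $p$ times respectively to get $V$-chains of the common length $pq$, and add them: the result is a $(V+V)$-chain, hence a $U$-chain, from $x+y$ to $x+y$. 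Thus $x+y\in CR(T)$.

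To see that $CR(T)$ is the unique chain recurrent class, the key point is the claim that \emph{for every $a\in CR(T)$ and every $U\in\cV_X$ there exist a $U$-chain for $T$ from $a$ to $0$ and a $U$-chain for $T$ from $0$ to $a$}. Granting this, for any $x,y\in CR(T)$ and any $U\in\cV_X$ we concatenate a $U$-chain from $x$ to $0$ with a $U$-chain from $0$ to $y$, obtaining a $U$-chain from $x$ to $y$, and symmetrically one from $y$ to $x$; hence $x\,\cR\,y$. Since $0\in CR(T)$, the set $CR(T)$ is nonempty, and being a single $\cR$-equivalence class it is precisely the unique chain recurrent class of $T$.

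It remains to prove the claim, and this is the step I expect to carry the real content. Fix $a\in CR(T)$ and $U\in\cV_X$, choose a balanced $V\in\cV_X$ with $V+V\subset U$, and let $(a_0,\dots,a_k)$ be a $V$-chain for $T$ from $a$ to $a$. The set $F:=\{a_0,\dots,a_k\}$ is finite, hence bounded, hence absorbed by $V$, so we may fix $N\in\N$ with $\frac{1}{Nk}\,F\subset V$. Splicing the chain with itself $N$ times yields a $V$-chain $(b_0,\dots,b_L)$ from $a$ to $a$ with $L:=Nk$ and every $b_i\in F$. Set $c_i:=\big(1-\tfrac{i}{L}\big)b_i$ for $0\le i\le L$, so that $c_0=a$ and $c_L=0$; then for $0\le i<L$,
\[
T(c_i)-c_{i+1}=\Big(1-\tfrac{i}{L}\Big)\big(T(b_i)-b_{i+1}\big)+\tfrac{1}{L}\,b_{i+1}\in V+V\subset U,
\]
since $1-\tfrac{i}{L}\in[0,1]$, $V$ is balanced, and $\tfrac{1}{L}\,b_{i+1}\in\frac{1}{Nk}F\subset V$. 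Hence $(c_0,\dots,c_L)$ is a $U$-chain from $a$ to $0$; taking instead $c_i:=\tfrac{i}{L}\,b_i$ gives, by the identical computation (balancedness of $V$ again absorbing the sign of the residual term $-\tfrac{1}{L}b_{i+1}$), a $U$-chain from $0$ to $a$. The only genuine obstacle is this device of ``fading'' the coefficients slowly enough that the residual terms $(\lambda_i-\lambda_{i+1})b_{i+1}$ stay inside the prescribed neighborhood; everything else is bookkeeping.
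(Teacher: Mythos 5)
Your proof is correct and rests on the same key device as the paper's: splice the cyclic chains to a common (sufficiently large) length, interpolate the coefficients linearly, and absorb the finite set of chain terms into a small balanced neighborhood to control the residual $\tfrac{1}{L}b_{i+1}$ terms. The only cosmetic difference is that you interpolate each chain against the zero chain and then concatenate through $0$, whereas the paper interpolates directly between the two spliced chains via $z_j=(1-\tfrac{j}{k})x'_j+\tfrac{j}{k}y'_j$; the computation is identical in substance.
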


\begin{proof}
Take $x,y \in CR(T)$ and $V \in \cV_X$. Choose $U \in \cV_X$ with $U+U+U+U \subset V$.
There are $U$-chains $(x_j)_{j=0}^r$ and $(y_j)_{j=0}^s$ for $T$ with $x_0 = x_r = x$ and $y_0 = y_s = y$.
Since the set $F\!:= \{x_1,\ldots,x_r,y_1,\ldots,y_s\}$ is bounded, there is $t \in \N$ such that $F \subset \lambda U$
whenever $|\lambda| \geq t$. Choose an integer $k \geq t$ which is a multiple of both $r$ and $s$. Let
\[
(x'_j)_{j=0}^k\!:= (x_0,x_1,\ldots,x_r,x_1,\ldots,x_r,\ldots,x_1,\ldots,x_r),
\]
which is also a $U$-chain for $T$ from $x$ to itself. Similarly,
\[
(y'_j)_{j=0}^k\!:= (y_0,y_1,\ldots,y_s,y_1,\ldots,y_s,\ldots,y_1,\ldots,y_s)
\]
is also a $U$-chain for $T$ from $y$ to itself. For each $0 \leq j \leq k$, let $z_j\!:= \Big(1 - \frac{j}{k}\Big) x'_j + \frac{j}{k}\, y'_j$.
Then $z_0 = x$, $z_k = y$ and
\[
Tz_j - z_{j+1}
  = \Big(1-\frac{j}{k}\Big)\big(Tx'_j - x'_{j+1}\big) + \frac{j}{k}\big(Ty'_j - y'_{j+1}\big) + \frac{1}{k}\, x'_{j+1} - \frac{1}{k}\, y'_{j+1}
  \in V,
\]
for all $0 \leq j < k$. Thus, $(z_j)_{j=0}^k$ is a $V$-chain for $T$ from $x$ to $y$. This proves that $CR(T)$ is a chain recurrent class.

Let $x,y \in CR(T)$ and $a,b \in \K$. Given $V \in \cV_X$, choose $U \in \cV_X$ with $aU + bU \subset V$.
There are $U$-chains $(x_j)_{j=0}^k$ and $(y_j)_{j=0}^t$ for $T$ from $x$ to itself and from $y$ to itself, respectively,
and we may assume $k = t$. Hence, $(ax_j + by_j)_{j=0}^k$ is a $V$-chain for $T$ from $ax+by$ to itself,
proving that $ax+by \in CR(T)$. This shows that $CR(T)$ is a subspace of $X$.
\end{proof}

It is worth to note that, in contrast with the above situation, one cannot ensure that the set $\rec(T)$ is a subspace of $X$, in general.
This is related to the problem of the recurrence of $n$-direct sum $T\oplus \dots \oplus T$ for every $n\in\N$ (see \cite{CMP2014,GLP}).

\begin{proposition}\label{InvClosedSubsp}
For any $T \in L(X)$, the set $CR(T)$ is a $T$-invariant closed subspace of~$X$. Moreover, if $T \in GL(X)$, then
$CR(T^{-1}) = CR(T)$ and $T(CR(T)) = CR(T)$; in particular, $T^{-1}$ is chain recurrent if and only if so is $T$.
\end{proposition}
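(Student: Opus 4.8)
The plan is to build on the preceding proposition, which already supplies that $CR(T)$ is a linear subspace of $X$, and then to establish the three remaining assertions in turn: $T$-invariance, closedness, and the statements for $T \in GL(X)$. I would dispose of $T$-invariance first, as it is the quickest. Given $x \in CR(T)$ and $V \in \cV_X$, continuity of $T$ at $0$ yields $W \in \cV_X$ with $T(W) \subset V$; taking a $W$-chain $(x_j)_{j=0}^k$ for $T$ from $x$ to itself and applying $T$ coordinatewise gives $T(Tx_j) - Tx_{j+1} = T(Tx_j - x_{j+1}) \in T(W) \subset V$, so $(Tx_j)_{j=0}^k$ is a $V$-chain for $T$ from $Tx$ to itself. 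Since $V$ was arbitrary, $Tx \in CR(T)$, hence $T(CR(T)) \subset CR(T)$.

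Next I would prove closedness. Let $x \in \ov{CR(T)}$ and $V \in \cV_X$. Pick $V_1 \in \cV_X$ with $V_1 + V_1 + V_1 \subset V$, and then, using continuity of $T$ at $0$, pick $U \in \cV_X$ with $U \subset V_1$ and $T(U) \subset V_1$. Since $x + U$ meets $CR(T)$, choose $y \in CR(T)$ with $x - y \in U$ together with a $U$-chain $(y_j)_{j=0}^k$ for $T$ from $y$ to itself. Replacing the two endpoints of this chain by $x$ produces a sequence from $x$ to itself whose interior steps are unchanged (hence still land in $U \subset V$) and whose first and last steps pick up, respectively, an extra $T(x - y) \in T(U)$ and an extra $y - x \in U$; by the choice of $V_1$ and $U$ these perturbed steps lie in $V$. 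The degenerate case $k = 1$, where a single step $Tx - x = T(x - y) + (Ty - y) + (y - x)$ must be controlled, is covered by the same bookkeeping. Thus $x \in CR(T)$, so $CR(T)$ is closed.

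For $T \in GL(X)$ I would first show $CR(T) \subset CR(T^{-1})$ by reversing chains in time. Given $x \in CR(T)$ and $V \in \cV_X$, continuity of $T^{-1}$ at $0$ gives $W \in \cV_X$ with $T^{-1}(W) \subset V$; take a $W$-chain $(x_j)_{j=0}^k$ for $T$ from $x$ to itself and set $z_i := x_{k-i}$. Applying $T^{-1}$ to $Tx_j - x_{j+1} \in W$ and using that $T^{-1}(W)$ is balanced yields $T^{-1}x_{j+1} - x_j \in T^{-1}(W) \subset V$, which after reindexing says exactly that $(z_i)_{i=0}^k$ is a $V$-chain for $T^{-1}$ from $x$ to itself. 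Applying the same argument to $T^{-1} \in GL(X)$ gives the reverse inclusion, whence $CR(T^{-1}) = CR(T)$; in particular $T^{-1}$ is chain recurrent exactly when $T$ is. Finally, $T(CR(T)) = CR(T)$ follows by combining the invariance part for $T$ and for $T^{-1}$: we have $T(CR(T)) \subset CR(T)$, while $T^{-1}(CR(T)) = T^{-1}(CR(T^{-1})) \subset CR(T^{-1}) = CR(T)$, so $CR(T) = T(T^{-1}(CR(T))) \subset T(CR(T))$.

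The only genuinely delicate point is the neighborhood bookkeeping in the closedness argument: one must verify that the endpoint replacement spoils none of the chain's steps, treat the length-one chain separately, and make sure all the neighborhoods $U$, $T(U)$ and their relevant sums were chosen in advance to sit inside $V$. Everything else is routine manipulation of balanced neighborhoods together with the continuity of $T$ and of $T^{-1}$; in particular, neither completeness nor the Hausdorff property of $X$ is used anywhere.
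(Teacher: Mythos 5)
Your proposal is correct and follows essentially the same route as the paper's proof: the same application of continuity of $T$ for invariance, the same endpoint-replacement of a chain based at a nearby $y \in CR(T)$ for closedness, the same time-reversal of chains via $T^{-1}$ and balancedness for $CR(T^{-1}) = CR(T)$, and the same two-inclusion argument for $T(CR(T)) = CR(T)$. Your slightly more generous choice $V_1+V_1+V_1 \subset V$ in the closedness step (which cleanly covers the length-one chain) is a minor bookkeeping refinement of the paper's $U+U \subset V$, not a different argument.
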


\begin{proof}
$CR(T)$ is $T$-invariant: Let $x \in CR(T)$ and $V \in \cV_X$.
By the continuity of $T$, there exists $U \in \cV_X$ with $T(U) \subset V$.
Since $x \in CR(T)$, there is a $U$-chain $(x_j)_{j=0}^k$ for $T$ from $x$ to itself.
Hence, $(Tx_j)_{j=0}^k$ is a $V$-chain for $T$ from $Tx$ to itself, proving that $Tx \in CR(T)$.

\smallskip\noindent
$CR(T)$ is closed: Let $x \in \ov{CR(T)}$ and $V \in \cV_X$.
Choose $U \in \cV_X$ with $U + U \subset V$ and $W \in \cV_X$ with $W \subset U$ and $T(W) \subset U$.
Take an $y \in (x+W) \cap CR(T)$ and let $(y_j)_{j=0}^k$ be a $W$-chain for $T$ from $y$ to itself.
Then $(x,y_1,\ldots,y_{k-1},x)$ is a $V$-chain for $T$ from $x$ to itself, proving that $x \in CR(T)$.

\smallskip\noindent
$CR(T^{-1}) = CR(T)$: Let $x \in CR(T)$ and $V \in \cV_X$. Choose $U \in \cV_X$ with $T^{-1}(U) \subset V$
and let $(x_j)_{j=0}^k$ be a $U$-chain for $T$ from $x$ to itself. Since
\[
T^{-1}x_{j+1} - x_j = T^{-1}(-(Tx_j - x_{j+1})) \in T^{-1}(U) \subset V \ \text{ for all } 0 \leq j < k,
\]
we have that $(x_k,x_{k-1},\ldots,x_1,x_0)$ is a $V$-chain for $T^{-1}$ from $x$ to itself. Thus, $x \in CR(T^{-1})$.

\smallskip\noindent
$T(CR(T)) = CR(T)$: By what we have seen above,
\[
T(CR(T)) \subset CR(T) \ \text{ and } \ T^{-1}(CR(T)) = T^{-1}(CR(T^{-1})) \subset CR(T^{-1}) = CR(T),
\]
which implies the desired equality.
\end{proof}

The simplest example of a chain recurrent operator is the identity operator.
The next result gives classes of operators that are not chain recurrent.

\begin{proposition}\label{NotCR}
If either
\begin{itemize}
\item [(a)] $T \in L(X)$, $V \in \cV_X$ is convex, $V \neq X$, $\lambda \in (0,1)$ and $T(V) \subset \lambda V$, or
\item [(b)] $T \in GL(X)$, $V \in \cV_X$ is convex, $V \neq X$, $\lambda \in (1,\infty)$ and $T(V) \supset \lambda V$,
\end{itemize}
then
\[
CR(T) \subset \bigcap_{n=1}^\infty \frac{1}{n} V.
\]
In particular, $T$ is not chain recurrent. Moreover, $CR(T) = \{0\}$ if $\bigcap_{n=1}^\infty \frac{1}{n} V = \{0\}$.
\end{proposition}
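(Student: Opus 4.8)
The plan is to reduce case (b) to case (a) by passing to the inverse, and then to handle case (a) by means of the Minkowski gauge of $V$.

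For the reduction: if $T \in GL(X)$ and $T(V) \supset \lambda V$ with $\lambda \in (1,\infty)$, then applying $T^{-1}$ gives $\lambda\, T^{-1}(V) = T^{-1}(\lambda V) \subset T^{-1}(T(V)) = V$, so $T^{-1}(V) \subset \frac{1}{\lambda}V$ with $\frac{1}{\lambda} \in (0,1)$. Thus $T^{-1}$ satisfies the hypothesis of case (a), and once that case is proved we obtain $CR(T^{-1}) \subset \bigcap_{n=1}^{\infty}\frac{1}{n}V$; since $CR(T^{-1}) = CR(T)$ by Proposition~\ref{InvClosedSubsp}, this settles case (b). So it remains to treat case (a).

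Assume $T(V) \subset \lambda V$ with $\lambda \in (0,1)$. Because $V$ is a convex balanced neighborhood of $0$, its Minkowski functional $p := p_V$ is a continuous seminorm satisfying $\{p < 1\} \subset V \subset \{p \le 1\}$, and consequently $\bigcap_{t>0} tV = \{x \in X : p(x) = 0\} \subset \bigcap_{n=1}^{\infty}\frac{1}{n}V$. A short scaling argument turns the hypothesis into the linear estimate $p(Tx) \le \lambda\, p(x)$ for every $x \in X$: if $p(x) < s$ then $x/s \in V$, hence $Tx/s \in \lambda V$, hence $p(Tx) \le \lambda s$, and one lets $s \downarrow p(x)$. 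Now fix $y \in CR(T)$ and $\delta > 0$, set $W := \{z \in X : p(z) < \delta\} \in \cV_X$, and pick a $W$-chain $(x_0, x_1, \dots, x_k)$ for $T$ with $x_0 = x_k = y$ and $k \ge 1$. From $Tx_j - x_{j+1} \in W$ and $p(Tx_j) \le \lambda\, p(x_j)$ we get $p(x_{j+1}) < \lambda\, p(x_j) + \delta$ for $0 \le j < k$; iterating and using $p(x_k) = p(x_0) = p(y)$ yields $(1 - \lambda^k)\,p(y) < \delta/(1-\lambda)$, and since $1-\lambda^k \ge 1-\lambda$ this gives $p(y) < \delta/(1-\lambda)^2$. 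As $\delta > 0$ was arbitrary, $p(y) = 0$, i.e.\ $y \in \bigcap_{t>0} tV \subset \bigcap_{n=1}^{\infty}\frac{1}{n}V$.

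This proves $CR(T) \subset \bigcap_{n=1}^{\infty}\frac{1}{n}V$ in both cases. Since $\bigcap_{n=1}^{\infty}\frac{1}{n}V \subset V \ne X$, we conclude $CR(T) \ne X$, i.e.\ $T$ is not chain recurrent; and when $\bigcap_{n=1}^{\infty}\frac{1}{n}V = \{0\}$ this forces $CR(T) = \{0\}$, the inclusion $\{0\} \subset CR(T)$ being trivial. I do not anticipate a serious obstacle here: the only delicate point is that $X$ is a priori neither Hausdorff nor locally convex, so one must rely on the fact that $V$ itself is convex and balanced to guarantee that $p_V$ is an honest continuous seminorm — which is exactly what legitimizes the gauge-functional computation above.
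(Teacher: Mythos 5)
Your proof is correct and follows essentially the same route as the paper's: both reduce case (b) to case (a) via $CR(T^{-1})=CR(T)$, and both exploit the fact that $T$ contracts the gauge of $V$ by the factor $\lambda$ while each step of a chain perturbs it by at most $\delta$, so that a chain returning to a point forces that point's gauge to vanish. The only cosmetic difference is that you phrase the argument through the Minkowski seminorm $p_V$ with an explicit quantitative bound, whereas the paper works set-theoretically with $a:=\inf\{b>0: x\in bV\}$ (assuming $V$ closed) and shows the chain gets trapped in $a(\lambda+\delta)V\not\ni x$.
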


\begin{proof}
Without loss of generality, we may assume that $V$ is closed. If (a) holds, choose $\delta \in (0,1)$ such that $\lambda + \delta < 1$.
Given $x \in X \backslash \bigcap_{n=1}^\infty \frac{1}{n} V$, define
\[
a\!:= \inf\{b > 0 : x \in bV\}.
\]
We have that $a > 0$, $x \in aV$ and $x \not\in bV$ for every $b \in (0,a)$.
Let $U\!:= a \delta V \in \cV_X$ and let $(x_j)_{j=0}^k$ be any $U$-chain for $T$ starting at $x_0 = x$.
If $j \in \{0,\ldots,k-1\}$ and $x_j \in aV$, then
\[
x_{j+1} = Tx_j - (Tx_j - x_{j+1}) \in a \lambda V + a \delta V = a (\lambda + \delta) V \subset a V.
\]
Hence, by induction, $x_j \in a (\lambda + \delta) V$ for all $j \in \{1,\ldots,k\}$.
In particular, $x_k \neq x$, proving that $x \not\in CR(T)$.
Case (b) follows from case (a) and the fact that $CR(T^{-1}) = CR(T)$.
\end{proof}

\begin{corollary}\label{CRcontractionexpansion}
Suppose that $X$ is a locally convex space whose topology is defined by a family $(q_i)_{i \in I}$ of semi-norms, where no $q_i$ is identically zero. If either
\begin{itemize}
\item [(a)] $T \in L(X)$, $i \in I$, $\lambda \in (0,1)$ and $q_i(Tx) \leq \lambda\, q_i(x)$ for all $x \in X$, or
\item [(b)] $T \in GL(X)$, $i \in I$, $\lambda \in (1,\infty)$ and $q_i(Tx) \geq \lambda\, q_i(x)$ for all $x \in X$,
\end{itemize}
then
\[
CR(T) \subset \{x \in X : q_i(x) = 0\}.
\]
In particular, $T$ is not chain recurrent. Moreover, $CR(T) = \{0\}$ if $q_i$ is a norm.
\end{corollary}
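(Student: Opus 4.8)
The plan is to deduce the statement directly from Proposition~\ref{NotCR} by manufacturing a suitable convex balanced neighborhood of $0$ out of the seminorm $q_i$. Concretely, I would set
\[
V\!:= \{x \in X : q_i(x) \leq 1\}.
\]
Since the topology of $X$ is generated by the family $(q_j)_{j \in I}$, the set $\{x \in X : q_i(x) < 1\}$ is open, so $V$ is a neighborhood of $0$; moreover $V$ is balanced and convex because $q_i$ is a seminorm, whence $V \in \cV_X$. Finally $V \neq X$: as $q_i$ is not identically zero, pick $x_0$ with $q_i(x_0) > 0$ and note that $2\, q_i(x_0)^{-1} x_0 \notin V$.

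Next I would check the hypotheses of Proposition~\ref{NotCR}, using the elementary identity $\mu V = \{x : q_i(x) \leq \mu\}$ valid for every $\mu > 0$. In case (a), if $q_i(Tx) \leq \lambda\, q_i(x)$ for all $x$, then $x \in V$ forces $q_i(Tx) \leq \lambda$, i.e. $T(V) \subset \lambda V$, which is precisely hypothesis (a) of Proposition~\ref{NotCR}. In case (b), given $z \in \lambda V$ and using that $T$ is invertible, put $y\!:= T^{-1} z$; then $\lambda\, q_i(y) \leq q_i(Ty) = q_i(z) \leq \lambda$, so $q_i(y) \leq 1$ and $z = Ty \in T(V)$, which gives $T(V) \supset \lambda V$, i.e. hypothesis (b) of Proposition~\ref{NotCR}.

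In both cases Proposition~\ref{NotCR} then yields
\[
CR(T) \subset \bigcap_{n=1}^\infty \frac{1}{n} V = \bigcap_{n=1}^\infty \{x : q_i(x) \leq 1/n\} = \{x \in X : q_i(x) = 0\},
\]
which is the main assertion. Since this last set is a proper subset of $X$ (again because $q_i \not\equiv 0$), we get $CR(T) \neq X$, so $T$ is not chain recurrent. If $q_i$ is a norm, then $\{x : q_i(x) = 0\} = \{0\}$, hence $CR(T) \subset \{0\}$; and since the constant sequence $(0,0,0)$ is trivially a $V$-chain for $T$ from $0$ to itself for every $V \in \cV_X$, we have $0 \in CR(T)$, so $CR(T) = \{0\}$. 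There is essentially no real obstacle here: the only points requiring care are that $V$ is genuinely a neighborhood of $0$ (which is where the assumption that the topology is defined by the $q_j$'s enters) and that $V \neq X$ (which is where the assumption that no $q_i$ vanishes identically enters); the rest is a direct translation of the seminorm inequalities into the inclusions required by Proposition~\ref{NotCR}.
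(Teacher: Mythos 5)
Your proof is correct and is exactly the intended derivation: the paper states this as an immediate corollary of Proposition~\ref{NotCR}, to be obtained by taking $V$ to be the unit ball of $q_i$ and translating the seminorm inequalities into the inclusions $T(V)\subset\lambda V$ (case (a)) and $T(V)\supset\lambda V$ (case (b)). All the verifications you flag as needing care (that $V$ is a convex balanced neighborhood of $0$, that $V\neq X$, and the identity $\mu V=\{x: q_i(x)\leq\mu\}$) are handled properly.
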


\begin{proposition}\label{RotationCR}
If $T \in L(X)$, $\lambda \in \K$ and $|\lambda| = 1$, then:
\begin{itemize}
\item [(a)] $CR(\lambda T) = CR(T)$.
\item [(b)] $\lambda T$ is chain recurrent if and only if so is $T$.
\item [(c)] $\lambda T$ has the positive shadowing property if and only if so does $T$.
\end{itemize}
\end{proposition}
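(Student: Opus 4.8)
The plan is to base everything on the identity $(\lambda T)^n=\lambda^nT^n$ and on the fact that, since $|\lambda|=1$ and every $U\in\cV_X$ is balanced, one has $\lambda^nU=U$ for all $n\in\Z$. Consequently, replacing each term $x_j$ of a pseudotrajectory or chain by $\lambda^{-j}x_j$ turns $U$-pseudotrajectories ($U$-chains) of $\lambda T$ into $U$-pseudotrajectories ($U$-chains) of $T$, and replacing $x_j$ by $\lambda^jx_j$ does the reverse, \emph{with the very same} $U$; this is the single computation on which all three parts rest.

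I would treat (c) first. Suppose $T$ has the positive shadowing property; given $V\in\cV_X$, let $U\in\cV_X$ be a (balanced) witness for $V$. If $(x_j)_{j\in\N_0}$ is a $U$-pseudotrajectory of $\lambda T$, then $y_j:=\lambda^{-j}x_j$ satisfies $Ty_j-y_{j+1}=\lambda^{-(j+1)}(\lambda Tx_j-x_{j+1})\in U$, so $(y_j)$ is a $U$-pseudotrajectory of $T$; picking $x\in X$ with $y_j-T^jx\in V$ for all $j$ gives $x_j-(\lambda T)^jx=\lambda^j(y_j-T^jx)\in V$, so the $\lambda T$-orbit of $x$ $V$-shadows $(x_j)$. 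Hence $\lambda T$ has the positive shadowing property, and the converse follows by applying this with $(\lambda T,\lambda^{-1})$ in place of $(T,\lambda)$, since $\lambda^{-1}(\lambda T)=T$. Part (b) will then follow at once from (a): $\lambda T$ is chain recurrent $\iff CR(\lambda T)=X\iff CR(T)=X\iff T$ is chain recurrent.

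For (a) the same substitution sends a $U$-chain of length $k$ for $\lambda T$ from $x$ to $y$ to a $U$-chain for $T$ from $x$ to $\lambda^{-k}y$, and conversely; \emph{the only real point is to repair this twist of the terminal endpoint}. Fix $x\in CR(T)$ and a balanced $V\in\cV_X$, and choose a balanced $U\in\cV_X$ with $U+U\subset V$. Take a $U$-chain for $T$ from $x$ to $x$ of some length $k_1$ and concatenate $m$ copies of it, producing a $U$-chain $(z_j)_{j=0}^{mk_1}$ for $T$ from $x$ to $x$. Since $\{z\in\K:|z|=1\}$ is a compact group, $1$ lies in the closure of $\{\lambda^{mk_1}:m\in\N\}$, and since $U$ absorbs $x$ we may choose $m$ with $(\lambda^{mk_1}-1)x\in U$. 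Then $(\lambda^jz_j)_{j=0}^{mk_1}$ is a $U$-chain for $\lambda T$ from $x$ to $\lambda^{mk_1}x$; replacing its last term $\lambda^{mk_1}x$ by $x$ introduces only the extra summand $(\lambda^{mk_1}-1)x\in U$ at the last step, so the outcome is a $V$-chain for $\lambda T$ from $x$ to $x$. Thus $CR(T)\subset CR(\lambda T)$, and $CR(\lambda T)\subset CR(T)$ follows on replacing $(T,\lambda)$ by $(\lambda T,\lambda^{-1})$. The endpoint twist is the expected obstacle; note that the lengths $k_1,2k_1,\ldots$ used above could instead be supplied by Proposition~\ref{CR-CT-CM} applied to $T|_{CR(T)}$, but self-concatenation makes this superfluous.
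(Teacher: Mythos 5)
Your proof is correct and follows essentially the same route as the paper's: part (c) via the substitution $x_j\mapsto\lambda^{-j}x_j$ using balancedness of $U$, and part (a) by concatenating copies of a chain from $x$ to itself, multiplying by powers of $\lambda$, and repairing the twisted endpoint $\lambda^{mk_1}x$ by choosing $m$ so that $\lambda^{mk_1}$ is close to $1$. Your uniform treatment of the rational and irrational rotation cases, and the explicit $U+U\subset V$ bookkeeping for the endpoint correction, are just a slightly more careful writing of the paper's argument.
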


\begin{proof}
(a): Let $x \in CR(T)$ and $V \in \cV_X$. We may assume that $V$ is open in $X$.
Let $(x_j)_{j=0}^k$ be a $V$-chain for $T$ from $x$ to itself. Given any integer $n \geq 1$, the sequence
\begin{equation}\label{chain}
(x_0,\lambda x_1,\ldots,\lambda^k x_k,\lambda^{k+1} x_1,\ldots,\lambda^{2k} x_k,\ldots,\lambda^{(n-1)k+1} x_1,\ldots,\lambda^{nk} x_k)
\end{equation}
is a $V$-chain for $\lambda T$ from $x$ to $\lambda^{nk} x$.
If $\lambda$ corresponds to a rational rotation on the unit circle, then we choose $n$ such that $\lambda^n = 1$, and so
(\ref{chain}) is a $V$-chain for $\lambda T$ from $x$ to itself.
In the case of an irrational rotation, we choose $n$ such that $\lambda^{nk}$ is so close to $1$ that we can replace the last term
in (\ref{chain}) by $x$ and so obtain a $V$-chain for $\lambda T$ from $x$ to itself. This proves that $CR(T) \subset CR(\lambda T)$.
Hence, $CR(\lambda T) \subset CR(\lambda^{-1} \lambda T) = CR(T)$.

\smallskip\noindent
(b): It follows immediately from (a).

\smallskip\noindent
(c): Suppose that $T$ has the positive shadowing property. Given $V \in \cV_X$, let $U \in \cV_X$ be associated to $V$
according to positive shadowing. If $(x_j)_{j \in \N_0}$ is a $U$-pseudotrajectory of $\lambda T$, then
$(\lambda^{-j} x_j)_{j \in \N_0}$ is a $U$-pseudotrajectory of $T$, and so it is $V$-shadowed by the trajectory of a certain
$x \in X$ under $T$. It follows that $(x_j)_{j \in \N_0}$ is $V$-shadowed by the trajectory of $x$ under $\lambda T$,
proving that $\lambda T$ has the positive shadowing property.
\end{proof}

\begin{proposition}\label{PowerCR}
For any $T \in L(X)$, $CR(T^n) = CR(T)$ for all $n \in \N$.
\end{proposition}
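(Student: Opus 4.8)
The plan is to prove the two inclusions $CR(T^n)\subseteq CR(T)$ and $CR(T)\subseteq CR(T^n)$ separately, in each case by working directly with $V$-chains; no appeal to chain transitivity is needed, only the continuity of the iterates $T,T^2,\dots,T^{n-1}$ and the fact that $\cV_X$ is a base of balanced neighbourhoods of $0$.

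For the inclusion $CR(T^n)\subseteq CR(T)$ I would refine chains. Given $x\in CR(T^n)$ and $V\in\cV_X$, take a $V$-chain $(x_j)_{j=0}^k$ for $T^n$ from $x$ to itself and insert between each $x_j$ and $x_{j+1}$ the genuine orbit segment $Tx_j,\dots,T^{n-1}x_j$. In the resulting sequence of length $nk$ the successive errors are $T(T^ix_j)-T^{i+1}x_j=0$ for $0\le i<n-1$ and $T(T^{n-1}x_j)-x_{j+1}=T^nx_j-x_{j+1}\in V$, so it is a $V$-chain for $T$ from $x$ to $x$. As $V\in\cV_X$ was arbitrary, $x\in CR(T)$.

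For the reverse inclusion $CR(T)\subseteq CR(T^n)$ I would first shrink the target neighbourhood: given $V\in\cV_X$, choose $V_0\in\cV_X$ with the $n$-fold sum $V_0+\cdots+V_0$ contained in $V$, and then, by continuity of $T^i$ for $0\le i<n$, a balanced $U\in\cV_X$ with $T^i(U)\subseteq V_0$ for all such $i$. Since $x\in CR(T)$ there is a $U$-chain $(y_t)_{t=0}^m$ for $T$ from $x$ to $x$, and concatenating $n$ copies of it gives a $U$-chain $(\widetilde{y}_t)_{t=0}^{nm}$ for $T$ from $x$ to $x$ whose length is a multiple of $n$. Then I would sample every $n$-th point, setting $x_j:=\widetilde{y}_{nj}$ for $0\le j\le m$, so that $x_0=x_m=x$. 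Fixing $j$ and putting $e_s:=T\widetilde{y}_{nj+s-1}-\widetilde{y}_{nj+s}\in U$, a one-line induction gives $\widetilde{y}_{nj+i}=T^i\widetilde{y}_{nj}-\sum_{s=1}^{i}T^{i-s}e_s$, so that
\[
T^nx_j-x_{j+1}=\sum_{s=1}^{n}T^{n-s}e_s\in\sum_{i=0}^{n-1}T^i(U)\subseteq V_0+\cdots+V_0\subseteq V .
\]
Hence $(x_j)_{j=0}^m$ is a $V$-chain for $T^n$ from $x$ to $x$, and since $V\in\cV_X$ was arbitrary, $x\in CR(T^n)$.

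I do not expect any serious obstacle: the whole argument is bookkeeping with balanced neighbourhoods. The one step that calls for a little care is the error estimate in the second inclusion, where the accumulated error over $n$ consecutive $T$-steps is the telescoped sum $\sum_{s=1}^{n}T^{n-s}e_s$; it is precisely the continuity of $T,\dots,T^{n-1}$ that lets us choose $U$ small enough to push this sum back inside the prescribed $V$. The small device making the sampling step go through without any mixing hypothesis is concatenating $n$ copies of a single recurrence chain so as to force its length to be divisible by $n$.
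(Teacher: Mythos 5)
Your proposal is correct and follows essentially the same route as the paper: the first inclusion by inserting genuine orbit segments into a $V$-chain for $T^n$, and the second by choosing $U$ with $U+T(U)+\cdots+T^{n-1}(U)\subset V$, concatenating $n$ copies of a $U$-chain for $T$ from $x$ to itself, and sampling every $n$-th point. The only difference is cosmetic: you spell out how to obtain such a $U$ (via an auxiliary $V_0$ and continuity of the iterates) and you write out the telescoped error sum explicitly, both of which the paper leaves implicit.
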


\begin{proof}
Fix $n \geq 2$, $x \in X$ and $V \in \cV_X$.
If $(x_j)_{j=0}^k$ is a $V$-chain for $T^n$ from $x$ to itself, then
\[
(x_0,Tx_0,\ldots,T^{n-1}x_0,x_1,Tx_1,\ldots,T^{n-1}x_1,\ldots,x_{k-1},Tx_{k-1},\ldots,T^{n-1}x_{k-1},x_k)
\]
is a $V$-chain for $T$ from $x$ to itself. Conversely, if $U \in \cV_X$ satisfies
\[
U + T(U) + T^2(U) + \cdots + T^{n-1}(U) \subset V,
\]
$(x_j)_{j=0}^k$ is a $U$-chain for $T$ from $x$ to itself and
\[
(y_j)_{j=0}^{kn}\!:= (x_0,x_1,\ldots,x_k,x_1,\ldots,x_k,\ldots,x_1,\ldots,x_k),
\]
then $(y_0,y_n,y_{2n},\ldots,y_{kn})$ is a $V$-chain for $T^n$ from $x$ to itself.
\end{proof}

\begin{corollary}\label{PowerCRCor}
For any $T \in L(X)$, the following assertions are equivalent:
\begin{itemize}
\item [(i)] $T$ is chain recurrent;
\item [(ii)] $T^n$ is chain recurrent for some $n \in \N$.
\item [(iii)] $T^n$ is chain recurrent for every $n \in \N$.
\end{itemize}
\end{corollary}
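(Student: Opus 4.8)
The plan is to read off the corollary directly from Proposition~\ref{PowerCR}, which asserts that $CR(T^n) = CR(T)$ for every $n \in \N$ and every $T \in L(X)$. The only extra ingredient is the tautological observation that an operator $S \in L(X)$ is chain recurrent exactly when $CR(S) = X$. With these two facts in hand the three implications are immediate, and there is essentially no calculation to perform.

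Concretely, I would argue as follows. The implication (iii) $\Rightarrow$ (ii) is trivial: specialize the hypothesis to $n = 1$ (or to any fixed $n$). For (ii) $\Rightarrow$ (i), suppose $T^n$ is chain recurrent for some $n \in \N$, so that $CR(T^n) = X$; since $T \in L(X)$, Proposition~\ref{PowerCR} gives $CR(T) = CR(T^n) = X$, i.e.\ $T$ is chain recurrent. Finally, for (i) $\Rightarrow$ (iii), if $T$ is chain recurrent then $CR(T) = X$, and applying Proposition~\ref{PowerCR} once more yields $CR(T^n) = CR(T) = X$ for every $n \in \N$, so every power $T^n$ is chain recurrent.

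There is no genuine obstacle here: the corollary is merely a repackaging of the set-equality $CR(T^n) = CR(T)$ together with the definition of chain recurrence. The only point worth a moment's care is that the continuity hypothesis $T \in L(X)$ is precisely what Proposition~\ref{PowerCR} requires (and it of course also gives $T^n \in L(X)$), so the proposition applies verbatim to $T$ and to each of its powers; one could alternatively run the explicit chain-concatenation argument from the proof of Proposition~\ref{PowerCR} directly, but invoking that proposition is the cleanest route.
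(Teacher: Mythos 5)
Your proposal is correct and matches the paper's approach exactly: the paper states this corollary immediately after Proposition~\ref{PowerCR} with no separate proof, precisely because it follows by combining $CR(T^n)=CR(T)$ with the definition of chain recurrence as $CR(S)=X$, which is what you do. Nothing is missing.
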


\begin{proposition}\label{PowerShad}
For any $T \in L(X)$, the following assertions are equivalent:
\begin{itemize}
\item [(i)] $T$ has the positive shadowing property;
\item [(ii)] $T^n$ has the positive shadowing property for some $n \in \N$.
\item [(iii)] $T^n$ has the positive shadowing property for every $n \in \N$.
\end{itemize}
\end{proposition}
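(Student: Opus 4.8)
The plan is to prove (iii)$\Rightarrow$(ii) trivially, (i)$\Rightarrow$(iii) by an interpolation argument, and (ii)$\Rightarrow$(i) by a subsampling argument; the last implication is the substantial one, and only forward orbits are involved so invertibility plays no role.

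For (i)$\Rightarrow$(iii), fix $n\in\N$ and a neighborhood $V$ of $0$, and let $U\in\cV_X$ be associated to $V$ by the positive shadowing property of $T$. Given a $U$-pseudotrajectory $(x_j)_{j\in\N_0}$ of $T^n$, I would interpolate it to a sequence $(y_m)_{m\in\N_0}$ by declaring $y_{nj+i}:=T^ix_j$ for $0\le i<n$. Then $Ty_m-y_{m+1}$ vanishes unless $m\equiv n-1\pmod n$, in which case it equals $T^nx_j-x_{j+1}\in U$; hence $(y_m)$ is a $U$-pseudotrajectory of $T$. Positive shadowing of $T$ yields $x\in X$ with $y_m-T^mx\in V$ for all $m$, and restricting to $m=nj$ gives $x_j-(T^n)^jx\in V$, i.e.\ the $T^n$-orbit of $x$ $V$-shadows $(x_j)$.

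For (ii)$\Rightarrow$(i), assume $T^n$ has positive shadowing for a fixed $n$. The key observation is that if $(x_j)_{j\in\N_0}$ is a $U$-pseudotrajectory of $T$, with errors $e_m:=x_{m+1}-Tx_m\in U$, then a telescoping induction on $i$ gives $x_{nj+i}=T^ix_{nj}+\sum_{l=0}^{i-1}T^{i-1-l}e_{nj+l}$ for $0\le i\le n$; in particular the subsampled sequence $(x_{nj})_{j\in\N_0}$ satisfies $x_{n(j+1)}-T^nx_{nj}\in U+T(U)+\cdots+T^{n-1}(U)$, so it is a pseudotrajectory of $T^n$. Shadowing it by an orbit $(T^{nj}x)_{j}$ of $T^n$ and substituting back into the same identity, one recovers $x_{nj+i}-T^{nj+i}x=T^i(x_{nj}-T^{nj}x)+\sum_{l=0}^{i-1}T^{i-1-l}e_{nj+l}$ at the intermediate indices as well. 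To make the estimates close up I would choose the neighborhoods in the order $V\rightsquigarrow V_0\rightsquigarrow V'\rightsquigarrow U'\rightsquigarrow A\rightsquigarrow U$: first $V_0\in\cV_X$ with $V_0+V_0\subset V$; then $V'\in\cV_X$ with $T^i(V')\subset V_0$ for all $0\le i<n$ (using continuity of the finitely many powers $T^i$); then $U'\in\cV_X$ associated to $V'$ via positive shadowing of $T^n$; then $A\in\cV_X$ whose $n$-fold sum $A+\cdots+A$ lies in $U'\cap V_0$; and finally $U\in\cV_X$ with $T^i(U)\subset A$ for all $0\le i<n$, so that $U+T(U)+\cdots+T^{n-1}(U)\subset U'\cap V_0$. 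With these choices $(x_{nj})$ is a $U'$-pseudotrajectory of $T^n$, the shadowing vector $x$ satisfies $x_{nj}-T^{nj}x\in V'$, and the identity above puts $x_m-T^mx$ in $T^i(V')+(U+T(U)+\cdots+T^{n-1}(U))\subset V_0+V_0\subset V$ for every $m=nj+i$.

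The main obstacle is precisely this bookkeeping of balanced neighborhoods in (ii)$\Rightarrow$(i): one must use the continuity of each individual power $T^0,\dots,T^{n-1}$ to absorb both the accumulated error sums $\sum T^{i-1-l}e_{nj+l}$ and the spread-out shadowing error $T^i(x_{nj}-T^{nj}x)$ into $V$, while keeping the chain of dependencies $V\rightsquigarrow V_0\rightsquigarrow V'\rightsquigarrow U'\rightsquigarrow A\rightsquigarrow U$ acyclic. The telescoping identities themselves are routine inductions, so once the order of choices is fixed the remaining computations are mechanical.
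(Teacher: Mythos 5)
Your proposal is correct and follows essentially the same route as the paper: the implication (i)$\Rightarrow$(iii) via interpolating a $T^n$-pseudotrajectory into a $T$-pseudotrajectory by inserting the intermediate iterates, and (ii)$\Rightarrow$(i) via subsampling at multiples of $n$, shadowing with $T^n$, and choosing neighborhoods so that sums of the form $U+T(U)+\cdots+T^{n-1}(U)$ are absorbed into the prescribed ones. Your bookkeeping with the auxiliary neighborhoods $V_0$, $V'$, $U'$, $A$, $U$ is just a more explicit version of the paper's chain $V'+T(V')+\cdots+T^{n-1}(V')\subset V$, $U'\subset V'$, $U+T(U)+\cdots+T^{n-1}(U)\subset U'$, and both close the estimate in the same way.
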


\begin{proof}
(i) $\Rightarrow$ (iii): It is enough to note that if $(x_j)_{j \in \N_0}$ is a $U$-pseudotrajectory of $T^n$, then
\[
(x_0,Tx_0,\ldots,T^{n-1}x_0,x_1,Tx_1,\ldots,T^{n-1}x_1,x_2,Tx_2,\ldots,T^{n-1}x_2,\ldots)
\]
is a $U$-pseudotrajectory of $T$.

\smallskip\noindent
(iii) $\Rightarrow$ (ii): Obvious.

\smallskip\noindent
(ii) $\Rightarrow$ (i): Given $V \in \cV_X$, let $V' \in \cV_X$ be such that
\[
V' + T(V') + T^2(V') + \cdots + T^{n-1}(V') \subset V.
\]
Let $U' \in \cV_X$ be associated to $V'$ according to the hypothesis that $T^n$ has the positive shadowing property.
We may assume that $U' \subset V'$. Let $U \in \cV_X$ be such that
\[
U + T(U) + T^2(U) + \cdots + T^{n-1}(U) \subset U'.
\]
If $(x_j)_{j \in \N_0}$ is a $U$-pseudotrajectory of $T$, then $(x_{jn})_{j \in \N_0}$ is a $U'$-pseudotrajectory of $T^n$,
and so it is $V'$-shadowed by the trajectory of a certain $x \in X$ under $T^n$.
It follows that $(x_j)_{j \in \N_0}$ is $V$-shadowed by the trajectory of $x$ under $T$.
\end{proof}

\begin{proposition}\label{InverseShad}
If $T \in GL(X)$, then $T^{-1}$ has the (finite) shadowing property if and only if so does~$T$.
\end{proposition}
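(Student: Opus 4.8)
The plan is to exploit the obvious \emph{time-reversal} bijection between $\Z$-indexed (pseudo)trajectories of $T$ and those of $T^{-1}$. Since $(T^{-1})^{-1}=T$ and $T^{-1}\in GL(X)$, the roles of $T$ and $T^{-1}$ are interchangeable, so it suffices to show that if $T$ has the (finite) shadowing property, then so does $T^{-1}$.

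First I would record the two elementary facts that make the reversal work. Fix a balanced $U\in\cV_X$, so that $-U=U$ and hence $-T(U)=T(-U)=T(U)$. If $(x_j)_{j\in\Z}$ is a $U$-pseudotrajectory of $T^{-1}$, i.e.\ $T^{-1}x_j-x_{j+1}\in U$ for all $j$, then applying the continuous linear map $T$ gives $x_j-Tx_{j+1}\in T(U)$, hence $Tx_{j+1}-x_j\in T(U)$; therefore the reversed sequence $\hat x_j:=x_{-j}$ satisfies $T\hat x_j-\hat x_{j+1}=Tx_{-j}-x_{-j-1}\in T(U)$, so $(\hat x_j)_{j\in\Z}$ is a $T(U)$-pseudotrajectory of $T$. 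Conversely, if the $T$-trajectory of a point $a$ $V$-shadows $(\hat x_j)_{j\in\Z}$, then rewriting $T^ja-\hat x_j\in V$ via $\hat x_j=x_{-j}$ shows that the $T^{-1}$-trajectory of the \emph{same} point $a$ $V$-shadows $(x_j)_{j\in\Z}$. The identical computations handle a finite $U$-chain $(x_j)_{j=0}^k$ of $T^{-1}$, whose reversal is $\hat x_j:=x_{k-j}$; the only extra bookkeeping is that if the $T$-trajectory of $a$ $V$-shadows $(\hat x_j)_{j=0}^k$, then the $T^{-1}$-trajectory of $b:=T^ka$ $V$-shadows $(x_j)_{j=0}^k$ (this is the same reindexing already used for bijective maps in Section~\ref{SVFS}).

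With these observations the argument is short. Given $V\in\cV_X$, let $U_0$ be the neighborhood of $0$ supplied by the (finite) shadowing property of $T$ for the target $V$, and, using continuity of $T$ at $0$, choose a balanced $U\in\cV_X$ with $T(U)\subset U_0$. Then any $U$-pseudotrajectory (resp.\ $U$-chain) of $T^{-1}$ reverses to a $T(U)$-, hence $U_0$-, pseudotrajectory (resp.\ chain) of $T$, which is $V$-shadowed by some $T$-trajectory; reversing back yields a $T^{-1}$-trajectory that $V$-shadows the original. Applying the same reasoning with $T^{-1}$ in place of $T$ (here using that $T^{-1}$ is continuous) gives the converse implication.

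I do not expect a genuine obstacle: the proof is a routine time-reversal together with one use of continuity of $T$ to pass from $U$-chains of $T^{-1}$ to $U_0$-chains of $T$. The only points requiring care are the base-point shift $b=T^ka$ in the finite case and the identity $-T(U)=T(U)$ for balanced $U$ — which is exactly the step that breaks for one-sided pseudotrajectories, and indeed the conclusion fails for the \emph{positive} shadowing property by Remark~\ref{Remarks}(b).
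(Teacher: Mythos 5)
Your proof is correct and follows essentially the same route as the paper's: a time reversal of the pseudotrajectory (the paper uses the reindexing $(x_{-j+1})_{j\in\Z}$ rather than $(x_{-j})_{j\in\Z}$, which only shifts the shadowing point from $a$ to $Ta$), combined with one application of continuity of $T$ to choose a balanced $U$ with $T(U)$ contained in the neighborhood supplied by the shadowing property of $T$. The paper leaves the finite case as ``analogous,'' whereas you spell out the base-point shift $b=T^k a$; otherwise the arguments coincide.
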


\begin{proof}
Suppose that $T$ has the shadowing property. Given $V \in \cV_X$, let $U \in \cV_X$ be associated to $V$ according to the
definition of shadowing. Choose $W \in \cV_X$ with $T(W) \subset U$. If $(x_j)_{j \in \Z}$ is a $W$-pseudotrajectory
of $T^{-1}$, then $(x_{-j+1})_{j \in \Z}$ is a $U$-pseudotrajectory of $T$, and so it is $V$-shadowed by the trajectory of some
$x \in X$ under~$T$. Hence, $(x_j)_{j \in \Z}$ is $V$-shadowed by the trajectory of $Tx$ under $T^{-1}$,
proving that $T^{-1}$ has the shadowing property. The case of finite shadowing is analogous.
\end{proof}

Recall that $X$ is said to be the {\em topological direct sum} of the subspaces $M_1,\ldots,M_n$
if $X$ is the algebraic direct sum of $M_1,\ldots,M_n$ and the canonical algebraic isomorphism
\[
(y_1,\ldots,y_n) \mapsto y_1 + \cdots + y_n
\]
is a homeomorphism from the product space $M_1 \times \cdots \times M_n$ onto $X$.

\begin{proposition}\label{DSCR}
Let $T \in L(X)$. If
\[
X = M_1 \oplus \cdots \oplus M_n
\]
is a topological direct sum of $T$-invariant subspaces $M_1,\ldots,M_n$, then:
\begin{itemize}
\item [(a)] $CR(T) = CR(T|_{M_1}) \oplus \cdots \oplus CR(T|_{M_n})$.
\item [(b)] $CR(T|_{M_i}) = CR(T) \cap M_i$ for all $i \in \{1,\ldots,n\}$.
\item [(c)] $T$ is chain recurrent if and only if so are $T|_{M_1},\ldots,T|_{M_n}$.
\item [(d)] $T$ has the positive shadowing property if and only if so do $T|_{M_1},\ldots,T|_{M_n}$.
\end{itemize}
\end{proposition}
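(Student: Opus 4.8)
The plan is to deduce (b), (c) and (d) from (a), whose proof rests on the continuous linear projections $\pi_i : X \to M_i$ associated with the \emph{topological} direct sum $X = M_1 \oplus \cdots \oplus M_n$. Two facts will be used repeatedly. First, each $\pi_i$ is continuous: hence for every neighborhood $W_i$ of $0$ in $M_i$ there is $U \in \cV_X$ with $\pi_i(U) \subset W_i$, and conversely, since $(y_1,\ldots,y_n) \mapsto y_1 + \cdots + y_n$ is a homeomorphism $M_1 \times \cdots \times M_n \to X$, for every $U \in \cV_X$ there are neighborhoods $W_i$ of $0$ in $M_i$ with $W_1 + \cdots + W_n \subset U$. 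Second, since every $M_i$ is $T$-invariant and $\pi_1 + \cdots + \pi_n$ is the identity, one has $\pi_i \circ T = T \circ \pi_i$ for each $i$; consequently $\pi_i$ maps $V$-chains (resp.\ $V$-pseudotrajectories) of $T$ to $\pi_i(V)$-chains (resp.\ $\pi_i(V)$-pseudotrajectories) of $T|_{M_i}$.

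For the inclusion $\supset$ in (a): given $x_i \in CR(T|_{M_i})$ and $U \in \cV_X$, pick neighborhoods $W_i$ of $0$ in $M_i$ with $W_1 + \cdots + W_n \subset U$, choose for each $i$ a $W_i$-chain for $T|_{M_i}$ from $x_i$ to itself, and — using Proposition~\ref{CR-CT-CM} (chain recurrence equals chain mixing), or simply by concatenating each closed chain with itself — arrange that they all have a common length $k$; adding them coordinatewise yields a $U$-chain for $T$ from $x_1 + \cdots + x_n$ to itself. For the inclusion $\subset$: given $x = x_1 + \cdots + x_n \in CR(T)$ with $x_i \in M_i$ and a neighborhood $W_i$ of $0$ in $M_i$, choose $U \in \cV_X$ with $\pi_i(U) \subset W_i$, take a $U$-chain for $T$ from $x$ to itself, and apply $\pi_i$ to it to obtain a $W_i$-chain for $T|_{M_i}$ from $\pi_i(x) = x_i$ to itself; hence $x_i \in CR(T|_{M_i})$.

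Parts (b) and (c) are then formal consequences of (a). Intersecting the identity of (a) with $M_i$ and using that each $CR(T|_{M_j}) \subset M_j$ together with the directness of $M_1 \oplus \cdots \oplus M_n$ gives $CR(T) \cap M_i = CR(T|_{M_i})$, which is (b). Since $X = M_1 \oplus \cdots \oplus M_n$ and $CR(T|_{M_i}) \subset M_i$ for each $i$, the equality $CR(T) = X$ holds if and only if $CR(T|_{M_i}) = M_i$ for every $i$, i.e.\ if and only if every $T|_{M_i}$ is chain recurrent, which is (c).

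For (d) I would argue directly with pseudotrajectories, in the same spirit. If $T$ has the positive shadowing property, fix $i$ and a neighborhood $W_i$ of $0$ in $M_i$; choose $V \in \cV_X$ with $\pi_i(V) \subset W_i$ and let $U \in \cV_X$ be associated to $V$ by positive shadowing of $T$. Any $(U \cap M_i)$-pseudotrajectory of $T|_{M_i}$ is a $U$-pseudotrajectory of $T$, hence $V$-shadowed by the $T$-orbit of some $x \in X$; applying $\pi_i$ and using $\pi_i \circ T = T \circ \pi_i$ shows it is $W_i$-shadowed by the $T|_{M_i}$-orbit of $\pi_i(x) \in M_i$. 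Conversely, if every $T|_{M_i}$ has the positive shadowing property, fix $V \in \cV_X$, pick neighborhoods $W_i$ of $0$ in $M_i$ with $W_1 + \cdots + W_n \subset V$, let $U_i$ be associated to $W_i$ by positive shadowing of $T|_{M_i}$, and choose $U \in \cV_X$ with $\pi_i(U) \subset U_i$ for every $i$; for a $U$-pseudotrajectory $(x_j)$ of $T$, each $(\pi_i(x_j))_j$ is a $U_i$-pseudotrajectory of $T|_{M_i}$, $W_i$-shadowed by the $T|_{M_i}$-orbit of some $x^{(i)} \in M_i$, and then $x := x^{(1)} + \cdots + x^{(n)}$ satisfies $T^j x - x_j = \sum_i \big( (T|_{M_i})^j x^{(i)} - \pi_i(x_j) \big) \in W_1 + \cdots + W_n \subset V$. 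The only point requiring a little care throughout is the passage between neighborhoods of $0$ in $X$ and in the factors $M_i$ — precisely where the topological, rather than merely algebraic, direct sum hypothesis enters — together with the bookkeeping needed to line up the lengths of the chains in the $\supset$ inclusion of (a).
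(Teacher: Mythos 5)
Your proof is correct and follows essentially the same projection-based strategy as the paper; the only cosmetic difference is that for the inclusion $CR(T|_{M_1}) \oplus \cdots \oplus CR(T|_{M_n}) \subset CR(T)$ the paper simply invokes the previously established fact that $CR(T)$ is a subspace containing each $CR(T|_{M_i})$, whereas you re-derive it directly by aligning the lengths of the closed chains and summing them coordinatewise. The remaining steps --- the reverse inclusion via the continuous projections, the deduction of (b) and (c) from (a), and both directions of (d) --- match the paper's argument.
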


\begin{proof}
(a): Since $CR(T|_{M_i}) \subset CR(T)$ for all $i \in \{1,\ldots,n\}$, we have that
\[
CR(T|_{M_1}) + \cdots + CR(T|_{M_n}) \subset CR(T).
\]
Conversely, let $x \in CR(T)$ and write $x = y_1 + \cdots + y_n$, where $y_1 \in M_1,\ldots,y_n \in M_n$.
We fix $i \in \{1,\ldots,n\}$ and prove that $y_i \in CR(T|_{M_i})$.
For this purpose, let $P_i : X \to M_i$ be the canonical projection.
Given $U \in \cV_{M_i}$, we have that $V\!:= P_i^{-1}(U) \in \cV_X$, because $P_i$ is continuous.
Let $(x_j)_{j=0}^k$ be a $V$-chain for $T$ from $x$ to itself. Since
\[
(T|_{M_i})(P_i x_j) - P_i x_{j+1} = P_i(Tx_j - x_{j+1}) \in P_i(V) \subset U \ \ \ (0 \leq j < k),
\]
we have that $(P_i x_j)_{j=0}^k$ is a $U$-chain for $T|_{M_i}$ from $y_i$ to itself. Thus, $y_i \in CR(T|_{M_i})$.

\smallskip\noindent
(b) and (c): They follow immediately from (a).

\smallskip\noindent
(d): Suppose that $T$ has the positive shadowing property. Fix $i \in \{1,\ldots,n\}$ and $U \in \cV_{M_i}$.
Define $V\!:= P_i^{-1}(U) \in \cV_X$ and let $V' \in \cV_X$ be associated to $V$ according to the definition of positive shadowing.
Let $U'\!:= V' \cap M_i \in \cV_{M_i}$. If a sequence $(y_j)_{j \in \N_0}$ is a $U'$-pseudotrajectory of $T|_{M_i}$,
then it is also a $V'$-pseudotrajectory of $T$, and so it is $V$-shadowed by the trajectory of a certain $x \in X$ under $T$. Since
\[
y_j - (T|_{M_i})^j(P_i x) = P_i(y_j - T^j x) \in P_i(V) \subset U \ \ \ (j \in \N_0),
\]
we have that $(y_j)_{j \in \N_0}$ is $U$-shadowed by the trajectory of $P_i x$ under $T|_{M_i}$,
proving that $T|_{M_i}$ has the positive shadowing property.

Conversely, suppose that each $T|_{M_i}$ has the positive shadowing property.
Given $V \in \cV_X$, choose $U_1 \in \cV_{M_1},\ldots,U_n \in \cV_{M_n}$ with $U_1 + \cdots + U_n \subset V$.
Let $U'_i \in \cV_{M_i}$ be associated to $U_i$ according to the definition of positive shadowing.
Let $V'\!:= P_1^{-1}(U'_1) \cap \ldots \cap P_n^{-1}(U'_n) \in \cV_X$. Then every $V'$-pseudotrajectory of $T$ is $V$-shadowed
by a real trajectory of $T$, proving that $T$ has the positive shadowing property.
\end{proof}

Recall that a {\em topological supplement} of a subspace $M$ of $X$ is a subspace $N$ of $X$ such that $X$ is the topological direct sum of $M$ and $N$.

\begin{corollary}\label{CorCR}
Let $T \in L(X)$. If $CR(T)$ admits a $T$-invariant topological supplement, then $T|_{CR(T)}$ is chain recurrent.
\end{corollary}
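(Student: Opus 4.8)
The plan is to read the statement off from Proposition~\ref{DSCR}, which has already been proved. First I would recall, via Proposition~\ref{InvClosedSubsp}, that $CR(T)$ is a $T$-invariant (closed) subspace of $X$. Let $N$ denote the $T$-invariant topological supplement of $CR(T)$ provided by the hypothesis, so that
\[
X = CR(T) \oplus N
\]
is a topological direct sum of two $T$-invariant subspaces. This is exactly the setting of Proposition~\ref{DSCR} with $n := 2$, $M_1 := CR(T)$ and $M_2 := N$.

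Next I would apply part~(b) of Proposition~\ref{DSCR} with the index $i = 1$, which yields
\[
CR\bigl(T|_{CR(T)}\bigr) = CR(T) \cap M_1 = CR(T) \cap CR(T) = CR(T).
\]
Since the chain recurrent set of $T|_{CR(T)}$ thus equals its whole underlying space $CR(T)$, the restricted operator $T|_{CR(T)}$ is chain recurrent, which is the desired assertion.

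I do not expect a genuine obstacle here: the entire content sits in Proposition~\ref{DSCR} together with the elementary fact (Proposition~\ref{InvClosedSubsp}) that $CR(T)$ is a $T$-invariant subspace, so the $T$-invariant-supplement hypothesis is precisely what is needed to place us in the direct-sum framework. If a self-contained argument were preferred, the one point to verify by hand would be that, denoting by $P : X \to CR(T)$ the continuous projection along $N$, any $V$-chain for $T$ in $X$ (with $V := P^{-1}(U)$ for $U \in \cV_{CR(T)}$) is carried by $P$, coordinate by coordinate, to a $U$-chain for $T|_{CR(T)}$ from the same starting point to the same endpoint; here one uses that $P$ commutes with $T$, which holds because both summands $CR(T)$ and $N$ are $T$-invariant. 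Reusing Proposition~\ref{DSCR}(b) is cleaner, and that is the route I would take.
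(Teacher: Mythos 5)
Your proposal is correct and coincides with the paper's own proof: both take $M_1 := CR(T)$ (which is $T$-invariant by Proposition~\ref{InvClosedSubsp}) and $M_2 := N$, and apply Proposition~\ref{DSCR}(b) to conclude $CR(T|_{CR(T)}) = CR(T) \cap CR(T) = CR(T)$. No gaps.
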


\begin{proof}
Let $M\!:= CR(T)$. By hypothesis, there is a $T$-invariant subspace $N$ of $X$ such that $X = M \oplus N$ as a topological direct sum.
Since $M$ is also $T$-invariant, Proposition~\ref{DSCR}(b) gives
$CR(T|_{CR(T)}) = CR(T|_M) = CR(T) \cap M = CR(T)$, and so $T|_{CR(T)}$ is chain recurrent.
\end{proof}

\begin{proposition}\label{TopSup}
Let $T \in L(X)$ and let $M$ be a $T$-invariant subspace of $X$.
Suppose that $M$ admits a $T$-invariant topological supplement.
\begin{itemize}
\item [(a)] If $T$ is chain recurrent, then so is $T|_M$.
\item [(b)] If $T$ has the positive shadowing property, then so does $T|_M$.
\end{itemize}
\end{proposition}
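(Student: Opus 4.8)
The plan is to reduce the statement directly to Proposition~\ref{DSCR}. By hypothesis there is a $T$-invariant subspace $N$ of $X$ with $X = M \oplus N$ as a topological direct sum, so, setting $M_1 := M$ and $M_2 := N$, we are precisely in the situation of Proposition~\ref{DSCR} with $n = 2$ and both summands $T$-invariant. Part~(c) of that proposition then gives, in particular, that chain recurrence of $T$ implies chain recurrence of $T|_M$, which is~(a); and part~(d) gives that the positive shadowing property of $T$ implies the same property for $T|_M$, which is~(b). So the whole statement is really the ``one distinguished summand'' specialization of Proposition~\ref{DSCR}.

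If one prefers to argue in place rather than invoke Proposition~\ref{DSCR}, the key tool is the continuous projection $P : X \to M$ along $N$, for which the $T$-invariance of both $M$ and $N$ yields $P \circ T = (T|_M) \circ P$. For~(a) I would, given $U \in \cV_M$, take $V := P^{-1}(U) \in \cV_X$ and push a $V$-chain for $T$ from a point of $M$ to itself forward by $P$, obtaining a $U$-chain for $T|_M$ from that point to itself; since $CR(T) = X$, this shows $CR(T|_M) = M$. For~(b) I would, given $U \in \cV_M$, set $V := P^{-1}(U)$, extract the positive-shadowing datum $V' \in \cV_X$ associated to $V$, and work with $U' := V' \cap M \in \cV_M$: a $U'$-pseudotrajectory of $T|_M$ is in particular a $V'$-pseudotrajectory of $T$, hence $V$-shadowed by the $T$-orbit of some $x \in X$, and applying $P$ (using $P y_j - (T|_M)^j(Px) = P(y_j - T^j x) \in P(V) \subset U$) produces a $(T|_M)$-orbit that $U$-shadows it.

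I do not expect any genuine obstacle here. The only mild point to keep in mind is that $N$ is assumed merely to be a topological supplement, not necessarily closed, but this is already the generality in which Proposition~\ref{DSCR} (and the continuity of the associated projection) is stated, so nothing extra is required; the content of the proposition is entirely carried by the earlier results.
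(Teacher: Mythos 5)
Your proposal is correct and is essentially identical to the paper's proof, which consists of the single line that the result follows immediately from Proposition~\ref{DSCR}(c,d). Your supplementary in-place argument via the continuous projection $P$ is also sound and simply unfolds what the proof of Proposition~\ref{DSCR} already does.
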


\begin{proof}
It follows immediately from Proposition \ref{DSCR}(c,d).
\end{proof}

\begin{remark}
In Proposition~\ref{TopSup}, it is not enough to assume that $M$ has a topological supplement, the hypothesis of $T$-invariance is essential.
In order to give a counterexample, assume that $X$ is a separable Banach space and that $T \in GL(X)$ is generalized hyperbolic
but not hyperbolic ({\em shifted hyperbolic} in the terminology of \cite{CirGolPuj21}).
Let $X = M \oplus N$ be the direct sum decomposition given by the definition of generalized hyperbolicity.
By the spectral radius formula, there exist constants $t \in (0,1)$ and $c \geq 1$ such that
\[
\|T^ny\| \leq c\, t^n \|y\| \text{ and } \|T^{-n}z\| \leq c\, t^n \|z\| \text{ for all } n \in \N_0, y \in M, z \in N.
\]
Choose $\lambda \in (t,1)$ and a nonzero $w \in M \cap T(N)$, and define
\[
u\!:= \sum_{n=-\infty}^\infty T^n w \ \ \ \text{ and } \ \ \ v\!:= \sum_{n=-\infty}^\infty \lambda^n T^n w .
\]
We have that $u$ is a nontrivial fixed point of $T$ and $v$ is an eigenvector of $T$ associated to the eigenvalue $\lambda^{-1}$.
Therefore,
\[
F\!:= \spa\{u\}, \ \ \ G\!:= \spa\{v\} \ \ \text{ and } \ \ H\!:= \spa\{u,v\}
\]
are $T$-invariant subspaces of $X$, which admit topological supplements since they are finite-dimensional.
Moreover:
\begin{itemize}
\item $T|_F$ is chain recurrent but does not have the positive shadowing property,
\item $T|_G$ has the shadowing property but is not chain recurrent,
\item $T|_H$ neither is chain recurrent nor has the positive shadowing property.
\end{itemize}
Although $T$ has the shadowing property, it may fail to be chain recurrent.
However, by \cite[Corollary~2]{CirGolPuj21}, the restriction of $T$ to the smallest closed $T$-invariant subspace $Y$ of $X$
containing $M \cap T(N)$ satisfies the so-called {\em frequent hypercyclicity criterion} \cite[Section~9.2]{KGroAPer11}, and so
it exhibits several types of chaotic behaviors, including {\em frequent hypercyclicity}, {\em mixing}, {\em Devaney chaos},
{\em dense distributional chaos} and {\em dense mean Li-Yorke chaos} \cite{BerBonMulPer13,BerBonPer20,KGroAPer11}.
In particular, $T|_Y$ is chain recurrent. Concrete examples of shifted hyperbolic operators on the Banach spaces $c_0(\Z)$ and
$\ell_p(\Z)$ ($1 \leq p < \infty$) were obtained in \cite[Theorem~9]{NBerAMes21}.
\end{remark}

For invertible operators on Banach spaces, the next proposition shows that the closed $T$-invariant subspace $CR(T)$ has the
property that the restricted operator $T|_{CR(T)}$ has the shadowing property whenever so does $T$.

\begin{proposition}
Let $X$ be a Banach space. 
If $T \in GL(X)$ has the shadowing property, then $T|_{CR(T)}$ is chain recurrent and has the shadowing property.
\end{proposition}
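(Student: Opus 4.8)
The plan is to write $Y:=CR(T)$ and work with $S:=T|_Y$. By Proposition~\ref{InvClosedSubsp}, $Y$ is a closed $T$-invariant subspace with $T(Y)=Y$ and $CR(T^{-1})=Y$, so $S$ is an invertible continuous linear operator on the Banach space $Y$, and it remains to show $CR(S)=Y$ and that $S$ has the shadowing property. Two elementary facts will carry the argument. \emph{(Bounded orbits are chain recurrent.)} If $p\in X$ satisfies $\sup_{j\in\Z}\|T^jp\|<\infty$, then for each $\delta>0$ and $N$ large the finite sequences $\bigl(\tfrac{N-j}{N}T^jp\bigr)_{j=0}^N$ and $\bigl(\tfrac{j}{N}T^{-(N-j)}p\bigr)_{j=0}^N$ are $\delta$-chains for $T$ from $p$ to $0$ and from $0$ to $p$ respectively, their consecutive errors having norm $\tfrac1N\|T^{j\pm1}p\|\le\tfrac1N\sup_i\|T^ip\|$; hence $p\in I_0(T)\cap O_0(T)$, and concatenating such chains shows $p\in CR(T)=Y$. \emph{(Shadows of bounded pseudotrajectories stay in $Y$.)} If a vector $q$ $\eps$-shadows a periodic $\eta$-pseudotrajectory $(z_j)_{j\in\Z}$ of $T$, then, since $(z_j)$ assumes finitely many values, $\sup_j\|T^jq\|\le\max_j\|z_j\|+\eps<\infty$, so by the first fact $q\in CR(T)=Y$. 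It is precisely here that completeness of the norm enters, and this is the only genuinely delicate point.

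For $CR(S)=Y$ it suffices, since a linear operator $S$ is chain recurrent whenever $I_0(S)$ and $O_0(S)$ are dense (Lemma~\ref{I0O0} and the remark after it), to prove $I_0(S)=O_0(S)=Y$. Given $y\in Y$ and $\delta>0$, I would pick $\eps>0$ with $\eps<\delta$ and $\|T\|\eps<\delta$, take $\eta>0$ associated to $\eps$ via the shadowing property of $T$, and use $y\in CR(T)=I_0(T)\cap O_0(T)$ (the inclusion $CR(T)\subseteq I_0(T)\cap O_0(T)$ resting on $CR(T)$ being the unique chain recurrent class of $T$, established in the Appendix) to produce $\eta$-chains for $T$ from $0$ to $y$ of some length $s\ge2$ and from $y$ to $0$ of some length $r\ge2$. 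Splicing and repeating these periodically yields a periodic $\eta$-pseudotrajectory $(z_j)_{j\in\Z}$ of $T$ with $z_0=0$ and $z_s=y$. Shadowing it by a vector $q$, the second fact gives $q\in Y$, so $T^jq\in Y$ for all $j$; and since $\|q\|<\eps$, $\|T^sq-y\|<\eps$, $\|T^{s+r}q\|<\eps$, the sequences $(y,T^{s+1}q,\dots,T^{s+r-1}q,0)$ and $(0,Tq,\dots,T^{s-1}q,y)$ are $\delta$-chains for $S$ lying in $Y$, witnessing $y\in I_0(S)$ and $y\in O_0(S)$.

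For the shadowing property of $S$, I would invoke Theorem~\ref{EquivSh}: on the Banach space $Y$ it is enough that $S$ have the finite shadowing property. Given $\eps>0$, let $\eta>0$ be associated to $\eps$ by the shadowing property of $T$, and let $(y_0,\dots,y_k)$ be a finite $\eta$-chain for $S$. Since $y_0,y_k\in CR(T)$ and $CR(T)$ is a single chain recurrent class, there is an $\eta$-chain for $T$ from $y_k$ to $y_0$; splicing it onto $(y_0,\dots,y_k)$ and iterating periodically produces a periodic $\eta$-pseudotrajectory of $T$, whose $\eps$-shadowing vector $q$ lies in $Y$ by the second fact and satisfies $\|S^jq-y_j\|<\eps$ for $0\le j\le k$. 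Thus $S$ has the finite shadowing property, and hence the shadowing property.

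The main obstacle, as flagged, is ensuring that the shadowing vectors land inside $CR(T)$ and not merely near it; this is resolved by applying shadowing only to periodic — hence bounded — pseudotrajectories and then observing, via the averaging lemma, that a vector with bounded two-sided orbit is automatically chain recurrent. This last step collapses in the Fr\'echet setting, since there an $\eps$-ball around a finite set need no longer be bounded, which is consistent with the hypothesis that $X$ be a Banach space. The remaining ingredients — reducing chain recurrence of $S$ to $I_0(S)=O_0(S)=Y$, and reducing shadowing of $S$ to finite shadowing via Theorem~\ref{EquivSh} — are routine bookkeeping.
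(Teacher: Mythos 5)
Your proposal is correct and follows essentially the same strategy as the paper: the key observation in both is that vectors with bounded two-sided orbit lie in $CR(T)$, so that extending a finite chain to a bounded (in your case periodic, in the paper's case eventually zero) pseudotrajectory of $T$ and shadowing it produces a shadowing vector that automatically lands in $CR(T)$. The only cosmetic differences are your use of periodic extensions and your derivation of chain recurrence of $T|_{CR(T)}$ via $I_0(S)=O_0(S)=CR(T)$ rather than by directly exhibiting a $\delta$-chain from $x$ to itself.
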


\begin{proof}
We claim that
\begin{equation}\label{Fact1}
\{x \in X : (T^jx)_{j \in \N} \text{ is bounded}\} \subset I_0(T).
\end{equation}
Indeed, assume $C\!:= \sup_{j \in \N} \|T^jx\| < \infty$. 
Given $\delta > 0$, there exists a decreasing sequence $1 = t_0 > t_1 > \cdots > t_{k-1} > t_k = 0$ of real numbers
such that $t_j - t_{j+1} < \delta/C$ for all $j \in \{0,\ldots,k-1\}$.
Then, $(t_jT^jx)_{j=0}^k$ is a $\delta$-chain for $T$ from $x$ to $0$.
Similarly,
\begin{equation}\label{Fact2}
\{x \in X : (T^{-j}x)_{j \in \N} \text{ is bounded}\} \subset O_0(T).
\end{equation}
By combining (\ref{Fact1}) and (\ref{Fact2}), we obtain
\begin{equation}\label{Fact3}
\{x \in X : (T^jx)_{j \in \Z} \text{ is bounded}\} \subset CR(T).
\end{equation}

Let us prove that $T|_{CR(T)}$ has the shadowing property. 
By Theorem~\ref{EquivSh}, it is enough to show that $T|_{CR(T)}$ has the finite shadowing property.
Fix $\eps > 0$ and let $\delta > 0$ be given by the shadowing property of $T$.
Let $(x_j)_{j=0}^k$ be a $\delta$-chain for $T|_{CR(T)}$. 
Since $x_0, x_k \in CR(T)$, there are $\delta$-chains for $T$ of the forms $(0,x_{-i},\ldots,x_0)$ and $(x_k,\ldots,x_\ell,0)$.
Hence,
\[
(x_j)_{j \in \Z}\!:= (\ldots,0,0,x_{-i},\ldots,x_0,\ldots,x_k,\ldots,x_\ell,0,0,\ldots)
\]
is a $\delta$-pseudotrajectory of $T$. Thus, there exists $x \in X$ with $\|T^jx - x_j\| < \eps$ for all $j \in \Z$.
By (\ref{Fact3}), $x \in CR(T)$ and we are done.

Let us now prove that $T|_{CR(T)}$ is chain recurrent. Fix $x \in CR(T)$ and $\delta > 0$.
Let $\eta > 0$ be associated to $\delta/(1+\|T\|)$ according to the shadowing property of $T$.
Since $x \in CR(T)$, there exists an $\eta$-chain $(x_j)_{j=0}^k$ for $T$ from $x$ to itself.
As in the previous paragraph, we can extend this $\eta$-chain to a bounded $\eta$-pseudotrajectory $(x_j)_{j \in \Z}$ of $T$.
By our choice of $\eta$, there exists $y \in X$ such that
\[
\|T^jy - x_j\| < \frac{\delta}{1 + \|T\|} \ \ \text{ for all } j \in \Z.
\]
By (\ref{Fact3}), $y \in CR(T)$. Hence, $(x,Ty,T^2y,\ldots,T^{k-1}y,x)$ is a $\delta$-chain for $T|_{CR(T)}$ from $x$ to itself,
proving that $T|_{CR(T)}$ is chain recurrent.
\end{proof}

In general, properties associated with chaotic behavior are not possible for compact operators. Certainly, although chain recurrence
is equivalent to chain transitivity in linear dynamics, and the second property can be thought as a chaotic behavior, the fact that, e.g.,
the identity of a finite dimensional space is an operator which is chain recurrent and compact spoils this impossibility for chain recurrence.
The natural question is whether we can have anything else than the ``finite dimensional'' case for chain recurrent compact operators.
We will show that, as a consequence of the previous results, there is nothing else.
We recall that a linear operator $T : X \to X$ on a topological vector space $X$ is {\em compact} if there is $V \in \cV_X$ such that
$T(V)$ is relatively compact.

\begin{proposition}\label{compactop}
Let $T \in L(X)$ be a compact operator on a locally convex space $X$ with continuous norm. Then $CR(T)$ is a finite dimensional subspace.
\end{proposition}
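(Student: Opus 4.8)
The plan is to peel off from $X$, by the classical Riesz theory of the compact operator $T$, a finite-dimensional $T$-invariant complemented subspace that already contains $CR(T)$, and then to combine the Appendix results on contractions (Corollary~\ref{CRcontractionexpansion}, Proposition~\ref{NotCR}) with the one on topological direct sums (Proposition~\ref{DSCR}).

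First I would note that, by Proposition~\ref{InvClosedSubsp}, $CR(T)$ is a closed $T$-invariant subspace, so it suffices to produce a finite-dimensional subspace containing it. Since $X$ carries a continuous norm it is Hausdorff, and the Riesz--Schauder theory applies to $T$: each nonzero point of $\sigma(T)$ is an isolated eigenvalue of finite algebraic multiplicity, and for every $\rho>0$ only finitely many eigenvalues have modulus $\geq\rho$. Fix $r\in(0,1)$ that is not the modulus of any eigenvalue of $T$, put $\Lambda:=\{\lambda\in\sigma(T):|\lambda|>r\}$ (a finite set), and let $P:=\sum_{\lambda\in\Lambda}P_\lambda$, where $P_\lambda$ is the Riesz projection onto $\ker(T-\lambda)^{n_\lambda}$ along $\operatorname{Im}(T-\lambda)^{n_\lambda}$, with $n_\lambda$ the common finite ascent and descent of $T-\lambda$. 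Each $P_\lambda$ is continuous (it projects onto a finite-dimensional subspace along a closed one of finite codimension), so $P$ is a continuous idempotent commuting with $T$, and one obtains a topological direct sum of $T$-invariant subspaces
\[
X=E\oplus F,\qquad E:=P(X)=\bigoplus_{\lambda\in\Lambda}\ker(T-\lambda)^{n_\lambda},\qquad F:=\ker P,
\]
where $E$ is finite-dimensional, $F$ is closed, and $\sigma(T|_F)=\sigma(T)\setminus\Lambda\subseteq\{z\in\C:|z|\le r\}$; in particular $r(T|_F)<1$.

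Next I would show $CR(T|_F)=\{0\}$. The operator $S:=T|_F$ is again compact, and $F$ carries the continuous norm induced by that of $X$. Since $S$ maps some balanced convex neighborhood $W$ of $0$ into a bounded set, it factors continuously through the local Banach space $\widehat{F}_W$ associated with $W$, where it acts as a compact operator whose spectral radius is still $<1$ (the factorization preserves the spectral radius); hence, after an equivalent renorming of $\widehat F_W$ (the standard trick $x\mapsto\sum_{n\ge 0}\mu^{-n}\|S^nx\|$ with $r(S)<\mu<1$), $S$ becomes a proper contraction there. Pulling this norm back and combining it with the given continuous norm of $F$, one gets a continuous norm $q$ on $F$ and a constant $\mu\in(0,1)$ with $q(Sx)\le\mu\,q(x)$ for all $x\in F$. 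Since $F$ carries the continuous norm $q$, Corollary~\ref{CRcontractionexpansion}(a) gives $CR(S)\subseteq\{x\in F:q(x)=0\}=\{0\}$. Finally, Proposition~\ref{DSCR}(a) yields
\[
CR(T)=CR(T|_E)\oplus CR(T|_F)=CR(T|_E)\subseteq E,
\]
so $CR(T)$ is finite-dimensional, as claimed.

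The main obstacle I anticipate is making the third paragraph rigorous on a possibly non-normable locally convex space: both the Riesz--Schauder decomposition above and the implication ``$r(T|_F)<1\Rightarrow CR(T|_F)=\{0\}$'' must be transported from the Banach-space setting, and this is precisely where the continuous-norm hypothesis is used — it lets one work inside a local Banach completion through which the compact operator factors, with no loss of spectral information. Everything else is routine bookkeeping with the results of the Appendix.
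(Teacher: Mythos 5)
Your overall strategy is sound and ends at the same place as the paper (a Riesz splitting $X=E\oplus F$ with $E$ finite dimensional, $CR(T|_F)=\{0\}$, and Proposition~\ref{DSCR}(a)), but you organize it in the opposite order, and that reversal is where the one real gap sits. The paper passes to the local Banach space \emph{first}: it chooses an absolutely convex $V$ with $\overline{T(V)}$ compact and gauge a norm, notes that $T$ induces a compact operator $T_V$ on the local Banach space $X_V$, and uses the elementary inclusion $CR(T)\subset CR(T_V)$ coming from the injective continuous intertwining map $X\to X_V$; all spectral theory then takes place in the Banach space $X_V$, where the Riesz decomposition gives $\|T_2^n\|<1$ for some $n$ and Propositions~\ref{NotCR} and \ref{PowerCR} yield $CR(T_2)=\{0\}$ with no renorming. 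You instead perform the Riesz--Schauder decomposition on the locally convex space $X$ itself and descend to a local Banach space only for the tail $F$. That forces you to justify the parenthetical assertion that the factorization of $S:=T|_F$ through $\widehat{F}_W$ ``preserves the spectral radius'': a priori the continuous extension $\widehat{S}$ to the completion could acquire nonzero spectrum not visible on $F$. The claim is in fact true for compact operators --- since $\widehat{S}$ maps the unit ball of $\widehat{F}_W$ into (a multiple of) the compact set $\overline{S(W)}\subset F$, every eigenvector of $\widehat{S}$ for a nonzero eigenvalue already lies in the image of $F$, hence would produce an eigenvalue of $S$ of modulus $>r$, which the splitting excludes --- but as written this is an unproved step, and it is precisely the step the paper's ordering avoids.

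Two further points. First, you never treat $\K=\R$: the Riesz--Schauder statement you invoke (nonzero spectrum consisting of isolated eigenvalues with projections onto $\ker(T-\lambda)^{n_\lambda}$) is a complex-scalar theorem, and the paper disposes of the real case by complexifying and applying Proposition~\ref{DSCR}. Second, even granting the locally convex Riesz theory you use on $X$ itself (which is classical but heavier than the Banach-space version), your treatment of $CR(T|_F)$ via a renorming and Corollary~\ref{CRcontractionexpansion}(a) is correct but more elaborate than needed; once one is inside a Banach space, $\|T_2^n\|<1$ together with Propositions~\ref{NotCR} and \ref{PowerCR} gives $CR(T_2)=\{0\}$ directly. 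In short: the skeleton matches the paper's, but you should either supply the eigenvalue-transfer argument for $\widehat{S}$ and the complexification step, or reorder the proof as the paper does so that neither is needed.
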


\begin{proof}
We first suppose that the scalar field is $\C$ and $X$ is a Banach space. In this case, the spectrum of $T$ consists, at most, of $0$
and a sequence of eigenvalues of $T$ whose limit is $0$, and each eigenvalue has a finite dimensional associated eigenspace.
The Riesz decomposition theorem yields that we can write $X = M_1 \oplus M_2$ with $M_i$ closed and $T$-invariant subspace, $i=1,2$,
$M_1$ finite dimensional and consisting of sums of eigenvectors, and $M_2$ so that, for $T_2\!:=T|_{M_2}$, we have that the spectral
radius of $T_2$ is strictly less than $1$. In particular, there is $n \in \N$ such that $\norm{T^n_2} < 1$.
By Propositions \ref{NotCR} and \ref{PowerCR}, we have that
\[
CR(T_2) = CR(T_2^n) = \{0\}.
\]
Thus, by applying Proposition~\ref{DSCR}, we get $CR(T) = CR(T_1) \oplus CR(T_2) \subset M_1$, and $CR(T)$ is finite dimensional.

Still in the complex case, but now allowing $X$ to be an arbitrary locally convex space with continuous norm, we take $V \in \cV_X$ absolutely convex such that
$K\!:= \overline{T(V)}$ is compact and the gauge $p$ of $V$ is a norm. In particular, $T$ naturally induces an operator $T_V$ on the local Banach space $X_V$
(which is the completion of the normed space $(X,p)$), which is also compact.
We easily have that $CR(T) \subset CR(T_V)$, with a natural inclusion, and $CR(T)$ is finite dimensional.

Finally, when $\K = \R$, we consider the complexification $\tilde{X} = X + i X = X \oplus X$ with the (compact) operator
$\tilde{T} = T + i\,T = T\oplus T$. Proposition~\ref{DSCR} yields $CR(\tilde{T}) = CR(T) + i\,CR(T)$, and $CR(T)$ is finite dimensional.
\end{proof}

\begin{remark}
When $X$ is a complex Banach space, there is another typical property satisfied by the spectrum of $T$ when it has certain chaotic behaviour:
Namely, every connected component of $\sigma(T)$ intersects $\mathbb{T}$. This is also the case for chain recurrence.
Indeed, if $K$ is a connected component of $\sigma(T)$, again Riesz decomposition theorem brings the existence of a
$T$-invariant closed subspace $M$, with a complement which is also $T$-invariant, such that $\sigma(T|_M) = K$.
Then, by Proposition~\ref{TopSup}, $T|_M$ is also chain recurrent. If $K$ does not intersect $\sigma(T)$, either it is contained in $\D$,
which is impossible since its spectral radius would be strictly less than $1$, or it is contained in the complementary of $\overline{\D}$,
also impossible since this would mean that $T|_M$ is invertible with a spectral radius of its inverse strictly less than $1$.
\end{remark}

\begin{proposition}\label{Product}
Suppose that $X$ is the product of a family $(X_i)_{i \in I}$ of topological vector spaces over $\K$, $T_i \in L(X_i)$ for each $i \in I$,
and $T \in L(X)$ is the product operator given by
\[
T((x_i)_{i \in I})\!:= (T_i x_i)_{i \in I}.
\]
The following properties hold:
\begin{itemize}
\item [(a)] $CR(T) = \prod_{i \in I} CR(T_i)$.
\item [(b)] $T$ is chain recurrent if and only if so is each $T_i$.
\item [(c)] $T$ has the positive shadowing property if and only if so does each $T_i$.
\end{itemize}
\end{proposition}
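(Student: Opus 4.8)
The plan is to reduce everything to the standard description of the product topology. In the product topological vector space $X=\prod_{i\in I}X_i$ a fundamental system of balanced neighborhoods of $0$ is given by the sets $V=\bigcap_{i\in F}\pi_i^{-1}(V_i)$, where $F\subset I$ is finite, each $V_i\in\cV_{X_i}$, and $\pi_i\colon X\to X_i$ is the (continuous, linear, surjective) canonical projection. Throughout it therefore suffices to test the relevant chain/shadowing conditions against such basic neighborhoods. Part (b) will then be immediate from (a), since $\prod_{i\in I}CR(T_i)=\prod_{i\in I}X_i=X$ if and only if $CR(T_i)=X_i$ for every $i$ (each $X_i$ being nonempty).

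For the inclusion $CR(T)\subset\prod_iCR(T_i)$ in (a), I would take $x=(x_i)_{i\in I}\in CR(T)$, fix $i\in I$ and $V_i\in\cV_{X_i}$, and apply chain recurrence of $T$ to the neighborhood $\pi_i^{-1}(V_i)$: since $\pi_i$ is linear and $\pi_i\circ T=T_i\circ\pi_i$, applying $\pi_i$ to a $\pi_i^{-1}(V_i)$-chain for $T$ from $x$ to $x$ produces a $V_i$-chain for $T_i$ from $x_i$ to $x_i$, whence $x_i\in CR(T_i)$. The converse inclusion is the step requiring the most care. Given $x=(x_i)$ with each $x_i\in CR(T_i)$ and a basic neighborhood $V=\bigcap_{i\in F}\pi_i^{-1}(V_i)$, for each $i\in F$ I would pick a $V_i$-chain $(x_{i,j})_{j=0}^{k_i}$ for $T_i$ from $x_i$ to $x_i$, and then synchronize lengths: repeating such a chain yields again a $V_i$-chain from $x_i$ to $x_i$, so passing to a common multiple $k$ of the finitely many $k_i$ and replacing each chain by its $(k/k_i)$-fold repetition gives, for every $i\in F$, a $V_i$-chain of length exactly $k$. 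For $i\notin F$ the neighborhood $V$ imposes no constraint on the $i$-th coordinate, so I simply use the constant sequence equal to $x_i$. Reading these off coordinatewise produces a sequence $(z_j)_{j=0}^k$ in $X$ with $z_0=z_k=x$ and $Tz_j-z_{j+1}\in V$ for all $j$ (because its $i$-th coordinate lies in $V_i$ for each $i\in F$), so $x\in CR(T)$.

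For (c), if each $T_i$ has the positive shadowing property then, given a basic $V=\bigcap_{i\in F}\pi_i^{-1}(V_i)$, I would take $U_i\in\cV_{X_i}$ associated to $V_i$ for $T_i$, set $U:=\bigcap_{i\in F}\pi_i^{-1}(U_i)$, project a $U$-pseudotrajectory $(x_j)$ of $T$ to a $U_i$-pseudotrajectory of $T_i$ in each coordinate $i\in F$, $V_i$-shadow it by some $a_i$, and take $a_i:=0$ for $i\notin F$; then $a:=(a_i)$ $V$-shadows $(x_j)$. Conversely, to shadow in a fixed coordinate $i_0$ when $T$ has the positive shadowing property, I would apply positive shadowing of $T$ to $\pi_{i_0}^{-1}(V_{i_0})$, obtaining $U\in\cV_X$, choose a basic $\bigcap_{i\in F}\pi_i^{-1}(U_i)\subset U$ with $i_0\in F$, and lift a $U_{i_0}$-pseudotrajectory $(y_j)$ of $T_{i_0}$ to the pseudotrajectory $\hat x_j$ of $T$ whose $i_0$-th coordinate is $y_j$ and whose other coordinates vanish; the errors $T\hat x_j-\hat x_{j+1}$ then lie in $\bigcap_{i\in F}\pi_i^{-1}(U_i)\subset U$ because their non-$i_0$ coordinates are $0$, and the $i_0$-th coordinate of any vector $\pi_{i_0}^{-1}(V_{i_0})$-shadowing $(\hat x_j)$ will $V_{i_0}$-shadow $(y_j)$.

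I expect the main obstacle to be the length-synchronization in the nontrivial inclusion of (a): a naive coordinatewise assembly fails because the chains realizing chain recurrence of the various $T_i$ come with different lengths, and the zero-padding device used in the forward direction of (c) is essentially the same issue seen from the shadowing side. Everything else is routine bookkeeping with the product uniformity, and one should remember at the end that it is enough to handle basic neighborhoods since they form a base of $\cV_X$.
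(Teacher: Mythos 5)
Your proof is correct and follows essentially the same route as the paper's: reduce to basic product neighborhoods, project chains and pseudotrajectories coordinatewise for the easy inclusions, synchronize chain lengths by repetition for the nontrivial inclusion in (a), and pad the untouched coordinates (with $x_i$ or with $0$) in (a) and (c). The only cosmetic difference is in the converse of (c), where the paper invokes its Proposition~\ref{DSCR}(d) by viewing $X$ as the topological direct sum of $X_{i_0}$ and $\prod_{\ell\neq i_0}X_\ell$ instead of lifting a pseudotrajectory by hand, but the underlying argument is the same.
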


\begin{proof}
For each $i \in I$, let $\pi_i : X \to X_i$ denote the canonical projection.

\smallskip\noindent
(a): Let $x\!:= (x_i)_{i \in I} \in \prod_{i \in I} CR(T_i)$ and $V \in \cV_X$. We may assume that $V = \prod_{i \in I} V_i$,
where $V_i \in \cV_{X_i}$ for each $i \in I$ and $V_i = X_i$ except for $i$ in a finite subset $J$ of $I$.
For each $i \in J$, there is a $V_i$-chain $(x^{(j)}_i)_{j=0}^{k_i}$ for $T_i$ from $x_i$ to itself,
and we may assume all the $k_i$'s equal to the same $k$.
For each $j \in \{0,\ldots,k\}$, let $x^{(j)}_i\!:= x_i$ for all $i \in I \backslash J$, and let $x^{(j)}\!:= (x^{(j)}_i)_{i \in I} \in X$.
Then $(x^{(j)})_{j=0}^k$ is a $V$-chain for $T$ from $x$ to itself.
For the converse, simply note that if $y \in CR(T)$, $i \in I$, $U_i \in \cV_{X_i}$ and $(y^{(j)})_{j=0}^\ell$ is a
$\pi_i^{-1}(U_i)$-chain for $T$ from $y$ to itself, then $(\pi_i(y^{(j)}))_{j=0}^\ell$ is a $U_i$-chain for $T_i$ from $\pi_i(y)$ to itself.

\smallskip\noindent
(b): It follows immediately from (a).

\smallskip\noindent
(c): Suppose that each $T_i$ has the positive shadowing property. Let $V\!:= \prod_{i \in I} V_i$, where $V_i \in \cV_{X_i}$
for each $i \in I$ and $V_i = X_i$ except for $i$ in a finite subset $J$ of $I$.
For each $i \in J$, let $U_i \in \cV_{X_i}$ be associated to $V_i$ according to the definition of positive shadowing.
Let $U\!:= \bigcap_{i \in J} \pi_i^{-1}(U_i) \in \cV_X$. Then every $U$-pseudotrajectory of $T$ is $V$-shadowed by some real trajectory
of $T$, proving that $T$ has the positive shadowing property.
Conversely, if $T$ has the positive shadowing property and $i \in I$, we regard the product space $X$ as the topological direct sum
of the $T$-invariant ``subspaces'' $X_i$ and $\prod_{\ell \neq i} X_\ell$ in a canonical way and apply Proposition~\ref{DSCR}(d)
to conclude that $T_i$ has the positive shadowing property.
\end{proof}

\begin{remark}\label{Product2}
Consider the notations of the previous proposition.
Let $Y\!:= \bigoplus_{i \in I} X_i$ be the external direct sum of the family $(X_i)_{i \in I}$, that is, the set of all $(x_i)_{i \in I} \in X$
such that $x_i = 0$ except for a finite number of indices. If we consider $Y$ as a subspace of the product topological vector space $X$
and $S \in L(Y)$ is the operator obtained by restricting $T$ to $Y$, then the following properties hold:
\begin{itemize}
\item [(a)] $CR(S) = \bigoplus_{i \in I} CR(T_i)$.
\item [(b)] $S$ is chain recurrent if and only if so is each $T_i$.
\item [(c)] $S$ has the positive shadowing property if and only if so does each $T_i$.
\end{itemize}
The proof is similar to the previous one and so we leave it to the reader.
\end{remark}

\begin{proposition}\label{DSLCS}
Suppose that $X$ is the locally convex direct sum of a family $(X_i)_{i \in I}$ of locally convex spaces over $\K$,
$T_i \in L(X_i)$ for each $i \in I$, and $T \in L(X)$ is given by
\[
T((x_i)_{i \in I})\!:= (T_i x_i)_{i \in I}.
\]
The following properties hold:
\begin{itemize}
\item [(a)] $CR(T) = \bigoplus_{i \in I} CR(T_i)$.
\item [(b)] $T$ is chain recurrent if and only if so is each $T_i$.
\item [(c)] If $T$ has the positive shadowing property, then so does each $T_i$.
\end{itemize}
\end{proposition}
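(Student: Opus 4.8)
The plan is to imitate closely the proof of Remark~\ref{Product2}; the only topological facts about the locally convex direct sum $X = \bigoplus_{i \in I} X_i$ that are needed are that each canonical inclusion $J_i \colon X_i \to X$ and each canonical projection $\pi_i \colon X \to X_i$ is continuous. The former holds by the very definition of the direct sum topology, and the latter because that topology is finer than the (locally convex) topology induced on $\bigoplus_{i \in I} X_i$ by the product $\prod_{i \in I} X_i$, on which the projections are continuous. I would also use repeatedly the intertwining identities $T \circ J_i = J_i \circ T_i$ and $\pi_i \circ T = T_i \circ \pi_i$.

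To prove (a), I would first check the inclusion $\bigoplus_{i \in I} CR(T_i) \subseteq CR(T)$: given $i \in I$ and $x_i \in CR(T_i)$, for each $V \in \cV_X$ continuity of $J_i$ supplies $U_i \in \cV_{X_i}$ with $J_i(U_i) \subseteq V$, and applying $J_i$ to a $U_i$-chain for $T_i$ from $x_i$ to itself yields a $V$-chain for $T$ from $J_i(x_i)$ to itself; hence $J_i(CR(T_i)) \subseteq CR(T)$, and since $CR(T)$ is a subspace (Proposition~\ref{InvClosedSubsp}), all finite sums of such elements lie in $CR(T)$. For the reverse inclusion, take $x = (x_i)_{i \in I} \in CR(T)$; only finitely many $x_i$ are nonzero (and $0 \in CR(T_i)$ for the other indices), and for an index $i$ with $x_i \neq 0$ one argues that, given $U_i \in \cV_{X_i}$, the balanced neighbourhood $\pi_i^{-1}(U_i)$ lies in $\cV_X$, and applying $\pi_i$ to a $\pi_i^{-1}(U_i)$-chain for $T$ from $x$ to itself produces a $U_i$-chain for $T_i$ from $x_i$ to itself; thus each $x_i \in CR(T_i)$ and $x \in \bigoplus_{i \in I} CR(T_i)$. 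Part (b) is then immediate from (a), since $T$ is chain recurrent iff $CR(T) = X$, and by (a) this holds iff $CR(T_i) = X_i$ for every $i \in I$.

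For (c) I would use the associativity of the locally convex direct sum: for a fixed $i \in I$, $X$ is the topological direct sum of the $T$-invariant closed subspaces $X_i$ and $\bigoplus_{j \neq i} X_j$, with $T|_{X_i} = T_i$ under this identification. If $T$ has the positive shadowing property, then Proposition~\ref{DSCR}(d), applied to this two-term decomposition, gives that $T_i$ has the positive shadowing property. I expect the only point requiring care to be the \emph{asymmetry} of statement (c): in contrast with Proposition~\ref{Product}(c) (and with Remark~\ref{Product2}(c), where the ambient topology is the coarser one induced by the product), the converse of (c) fails here, because a basic neighbourhood of $0$ in the locally convex direct sum constrains \emph{all} the factors $X_i$ simultaneously, so one cannot in general assemble a single $V' \in \cV_X$ out of the individual neighbourhoods $U_i' \in \cV_{X_i}$ furnished by the positive shadowing of the $T_i$; indeed, an infinite locally convex direct sum of copies of twice the identity operator on $\K$ is a concrete counterexample. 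Hence the implication in (c) is genuinely one-directional, which is why the statement is phrased as it is.
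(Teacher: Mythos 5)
Your proof is correct, and it reaches the same conclusions by a slightly more self-contained route than the paper's. For part (a), the paper proves the inclusion $\bigoplus_{i \in I} CR(T_i) \subseteq CR(T)$ by grouping the finitely many active coordinates into $Y := \bigoplus_{i \in J} X_i$, identifying $Y$ with the finite product $\prod_{i \in J} X_i$, and invoking Proposition~\ref{Product}(a) to produce a chain in $Y$ that is then padded with zeros; you instead push a $U_i$-chain forward along each continuous inclusion $J_i$ to get $J_i(CR(T_i)) \subseteq CR(T)$ and then use the fact that $CR(T)$ is a subspace to handle finite sums, which avoids the reduction to the product case entirely (and implicitly re-uses the chain-padding work already done in the proof that $CR(T)$ is a subspace). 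For the reverse inclusion the paper cites Proposition~\ref{DSCR}(a) via the $T$-invariant topological supplement $\bigoplus_{j \neq i} X_j$, whereas you argue directly by pulling back neighbourhoods along the continuous projection $\pi_i$ and pushing chains forward along it; these are the same computation, yours just inlined. Your justification that $\pi_i$ is continuous (the direct sum topology is finer than the topology induced by the product) is valid. Parts (b) and (c) match the paper exactly: (b) is immediate from (a), and (c) is Proposition~\ref{TopSup}(b), i.e.\ Proposition~\ref{DSCR}(d) applied to the two-term decomposition $X = X_i \oplus \bigoplus_{j \neq i} X_j$. Your closing observation about the one-directionality of (c), with the direct sum of copies of twice the identity on $\K$ as a counterexample, is exactly the remark the paper makes immediately after the proposition.
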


\begin{proof}
(a): Let $x\!:= (x_i)_{i \in I} \in \bigoplus_{i \in I} CR(T_i)$ and $V \in \cV_X$.
There is a finite subset $J$ of $I$ such that $x_i = 0$ for all $i \in I \backslash J$.
We may regard $X$ as the topological direct sum of the $T$-invariant ``subspaces'' $Y\!:= \bigoplus_{i \in J} X_i$ and
$Z\!:= \bigoplus_{i \in I \backslash J} X_i$ in a canonical way. Let $U \in \cV_Y$ and $W \in \cV_Z$ be such that $U + W \subset V$.
Since $J$ is finite, $Y$ coincides with the product space $\prod_{i \in J} X_i$.
Hence, by Proposition~\ref{Product}(a), $(x_i)_{i \in J} \in \prod_{i \in J} CR(T_i) = CR(T|_Y)$.
Thus, there is a $U$-chain for $T|_Y$ from $(x_i)_{i \in J}$ to itself.
Each element of this $U$-chain can be regarded as an element of $X$ by completing the remaining coordinates with $0$'s.
In this way we obtain a $V$-chain for $T$ from $x$ to itself, proving that $x \in CR(T)$.
For the converse, note that each $X_i$ can be regarded as a $T$-invariant subspace of $X$ that admits a $T$-invariant
topological supplement, and so we can apply Proposition~\ref{DSCR}(a).

\smallskip\noindent
(b): It follows immediately from (a).

\smallskip\noindent
(c): It is enough to apply Proposition~\ref{TopSup}(b), since each $X_i$ can be regarded as a $T$-invariant subspace of $X$
that admits a $T$-invariant topological supplement.
\end{proof}

\begin{remark}
The converse of Proposition~\ref{DSLCS}(c) is false in general. For instance, consider the locally convex direct sum $\K^{(\N)}$,
where $\K$ is endowed with its usual topology, and
\[
T((x_n)_{n \in \N})\!:= (2x_n)_{n \in \N} \ \ \text{ for all } (x_n)_{n \in \N} \in \K^{(\N)}.
\]
We know that the operator $x \in \K \mapsto 2x \in \K$ has the shadowing property, but we will show that the operator $T$
does not have the positive shadowing property. For this purpose,  let $j_n : \K \to \K^{(\N)}$ denote the $n^\text{th}$ canonical
injection and let $\ov{\Delta}(0;\delta)\!:= \{\lambda \in \K : |\lambda| \leq \delta\}$ for $\delta > 0$.
Consider the following neighborhood of $0$ in $\K^{(\N)}$:
\[
V\!:= \co\Big(\bigcup_{n=1}^\infty j_n(\ov{\Delta}(0;1))\Big),
\]
where $\co(A)$ denotes the convex hull of the set $A \subset \K^{(\N)}$.
Given any neighborhood $U$ of $0$ in $\K^{(\N)}$ of the form
\[
U\!:= \co\Big(\bigcup_{n=1}^\infty j_n(\ov{\Delta}(0;\delta_n))\Big),
\]
with $\delta_n > 0$ for all $n \in \N$, define $x^{(0)}\!:= 0$ and $x^{(j)}\!:= T x^{(j-1)} + \delta_j e_j$ for $j \geq 1$,
where $e_j$ is the sequence whose $j^\text{th}$ coordinate is $1$ and the others are $0$.
Then $(x^{(j)})_{j \in \N_0}$ is a $U$-pseudotrajectory of $T$, but it cannot be $V$-shadowed by a trajectory of $T$,
because each $x \in \K^{(\N)}$ has finite support.
\end{remark}

\begin{remark}
Propositions \ref{RotationCR}(c), \ref{PowerShad}, \ref{DSCR}(d), \ref{TopSup}(b), \ref{Product}(c) and \ref{DSLCS}(c),
as well as Remark~\ref{Product2}(c), remain true if we replace positive shadowing by finite shadowing.
Moreover, all these results have analogous formulations with shadowing instead of positive shadowing in the case of invertible operators.
\end{remark}

We close the paper by proposing the following open problems:

\medskip\noindent
{\bf Problem A.} To characterize the Fr\'echet spaces in which shadowing and finite shadowing coincide for operators or
at least find sufficient (resp.\ necessary) conditions for the validity of this equivalence in the case of non-normable Fr\'echet spaces.

\medskip\noindent
{\bf Problem B.} Does Theorem~\ref{CRDDC} hold for every Fr\'echet space? If not, for which Fr\'echet spaces does
the property described in Theorem~\ref{CRDDC} hold?

\medskip\noindent
{\bf Problem C.} To characterize the periodic shadowing property for bilateral weighted shifts on Banach sequence spaces.

\medskip\noindent
{\bf Problem D.} If $T \in L(X)$ is an (invertible) operator on a Banach space $X$, is it true that $T|_{CR(T)}$ is always chain recurrent?

\bigskip\noindent
{\it Note:} We were informed that Antoni L\'opez-Mart\'inez and Dimitris Papathanasiou have recently solved Problem~D in the negative.


\section*{Acknowledgements}

The first author is beneficiary of a grant within the framework of the grants for the retraining, modality Mar\'ia Zambrano,
in the Spanish university system (Spanish Ministry of Universities, financed by the European Union, NextGenerationEU).
The first author was also partially supported by CNPq (Conselho Nacional de Desenvolvimento Cient\'ifico e Tecnol\'ogico - Brasil),
project {\#}308238/2021-4, and by CAPES (Coordenação de Aperfeiçoamento de Pessoal de Nível Superior - Brasil), Finance Code 001.
Both authors were partially supported by MCIN/AEI/10.13039/501100011033, Projects PID2019-105011GB-I00 and PID2022-139449NB-I00,
and the second author was also supported by Generalitat Valenciana, Project PROMETEU/2021/070. We would like to thank the referee whose careful review 
resulted in an improved presentation of the article. 


\end{document}